\documentclass[11pt]{article} 
\pagestyle{plain} 

\usepackage{enumitem}
\setlist[enumerate]{label=(\roman*)} 

\usepackage{tabularx} 
\usepackage{amssymb}
\usepackage{amsmath}
\usepackage{amsthm}
\usepackage{latexsym}
\usepackage{amsfonts}
\usepackage{geometry}
\geometry{verbose}
\usepackage[hyphens]{url} 
\usepackage{hyperref}

\usepackage{mathtools}

\usepackage{bbm} 

\numberwithin{equation}{section} 

\newtheorem{Theorem}{Theorem}
\theoremstyle{definition}
\newtheorem{Lemma}[Theorem]{Lemma}
\newtheorem{Definition}[Theorem]{Definition}

\newtheorem{Corollary}[Theorem]{Corollary}
\newtheorem{Remark}[Theorem]{Remark}
\newtheorem{Proposition}[Theorem]{Proposition}

\usepackage{mathtools}
\DeclarePairedDelimiter\abs{\lvert}{\rvert}%
\DeclarePairedDelimiter\norm{\lVert}{\rVert}%

\makeatletter
\let\oldabs\abs
\def\abs{\@ifstar{\oldabs}{\oldabs*}}
\let\oldnorm\norm
\def\norm{\@ifstar{\oldnorm}{\oldnorm*}}
\makeatother

\usepackage{mathrsfs}

\newcommand{\R}{\mathbb{R}}
\newcommand{\Rp}{\mathbb{R}_{+}}

\newcommand{\N}{\mathbb{N}}
\newcommand{\Q}{\mathbb{Q}}
\newcommand{\C}{\mathbb{C}}

\newcommand{\ud}{\mathrm{d}}

\newcommand{\ds}{\, \mathrm{d}s}
\newcommand{\dt}{\, \mathrm{d}t}
\newcommand{\dr}{\, \mathrm{d}r}

\newcommand{\dL}{\, \mathrm{d}L}

\newcommand{\dX}{\, \mathrm{d}X}

\newcommand{\E}{\mathbb{E}}

\newcommand{\1}{\mathbbm{1}} 
\newcommand{\Borel}{\mathfrak{B}}

\newcommand{\scapro}[2]{\langle #1,#2\rangle}    
\newcommand{\HSnorm}[1]{\norm{#1}_{L_{\rm HS}(U,H)}}

\DeclarePairedDelimiter\bignorm{\big\Vert}{\big\Vert}
\DeclarePairedDelimiter\smallnorm{\Vert}{\Vert}


\DeclareMathOperator{\e}{\mathrm{e}}


\let\phi\varphi
\let\epsilon\varepsilon

\DeclareMathOperator{\Span}{\mathrm{Span}}

\DeclareMathOperator{\Id}{\mathrm{Id}}


%
%
%
%
%

%

\allowdisplaybreaks

\title{Stochastic evolution equations  driven by \\  cylindrical stable noise}

\author{Tomasz Kosmala \thanks{t.kosmala@qmul.ac.uk} \\ School of Mathematical Sciences \\ Queen Mary University of London \\ Mile End Road, London E1 4NS\\ United Kingdom \and Markus Riedle \thanks{markus.riedle@kcl.ac.uk} \\Department of Mathematics \\ King's College London\\ London WC2R 2LS\\ United Kingdom}

\usepackage{letltxmacro}
\LetLtxMacro\Oldfootnote\footnote

\usepackage{todonotes} 

\begin{document}
\date{August 2, 2021}
\maketitle

\begin{abstract}
We prove existence and uniqueness of a mild solution of a stochastic evolution equation driven by a standard $\alpha$-stable cylindrical L\'evy process defined on a Hilbert space for $\alpha \in (1,2)$. The coefficients are assumed to map between certain domains of fractional powers of the generator present in the equation. The solution is constructed as a weak limit of the Picard iteration using tightness arguments. Existence of strong solution is obtained by a general version of the Yamada--Watanabe theorem.
\end{abstract}

\noindent
{\bf AMS 2010 Subject Classification:} 60H15,60G52, 60G51, 47D06, \\
{\bf Keywords and Phrases:} cylindrical L\'evy processes, stable processes, stochastic partial differential equations, tightness. 




\section{Introduction}

Standard symmetric $\alpha$-stable distributions are the natural generalisations of Gaussian distributions for modelling random  perturbations of all kinds of dynamical systems. In the mathematical perspective, they are analytically tractable and well understood, and in the applied perspective they often meet various empirical requests, such as heavy tails, self-similarity and infinite variance. The importance of these models is reflected by the available vast literature on dynamical systems perturbed by random noises with $\alpha$-stable distributions. 

Surprisingly, there are only a few results known for partial differential equations perturbed by  $\alpha$-stable distributions. In fact, only in the random field approach, based on the seminal work by Walsh, one can find publications on stochastic partial differential equations (SPDEs) driven by multiplicative $\alpha$-stable noise, whereas in the semigroup approach, in the spirit of Da Prato and Zabczyk, one can only find results for equations with additive driving noise  distributed according to an $\alpha$-stable law. SPDEs driven by L\'evy processes in the semigroup approach are considered in the monograph by Peszat and Zabczyk \cite{Peszat_Zabczyk}; but here, if the driving noise is multiplicative, the L\'evy process is assumed to be a genuine Hilbert space-valued process. The lack of results in the semigroup approach is due to the fact that a random noise with a standard $\alpha$-stable distribution does not exist as an ordinary  Hilbert space-valued process but only in the generalised sense of  Gel'fand and Vilenkin \cite{Gelfand_Vilenkin} or  Segal \cite{Segal}. 

The purpose of this work is to close this gap and to provide the first existence result for a general evolution equation of the form
\begin{equation}
\label{intro_stochastic_evolution_equation}
\dX(t) =  \big( AX(t) + F(X(t)) \big) \dt + G(X(t)) \dL(t),\qquad t\in [0,T].
\end{equation}
Here, $A$ is the generator of a strongly continuous semigroup on a Hilbert space $H$, the non-linearity is described by the mapping $F\colon H\to H$, and the diffusion operator by $G\colon H\to L_{\rm HS}(U,H)$. 
The noise $L$ is modelled by a generalised process with a standard symmetric $\alpha$-stable distribution for $\alpha \in (1,2)$, and as such it can be interpreted as a specific example of a cylindrical L\'evy process in the framework recently developed by Riedle and co-authors. The precise conditions on the coefficients can be found in Theorem \ref{th_existence_weak_sol_main_result}.

In the random field approach, SPDEs with stable noise were considered the first time by Mueller \cite{Mueller} for $\alpha \leq 1$, and  Mytnik \cite{Mytnik} for $1<\alpha<2$ in the context of certain equations with non-Lipschitz coefficients; see also Xiong, Yang and Zhou \cite{Xiong_Yang,Yang_Zhou}.
Existence and uniqueness of solutions for equations with Lipschitz coefficients were recently proved by Balan \cite{Balan}. Her results were improved by Chong \cite{Chong_stochastic_PDEs} and by  Chong, Dalang and Humeau \cite{Chong_Dalang_Humeau}, who additionally characterised the path space of solutions as negative Sobolev spaces. 

As mentioned above, in the semigroup approach only equations with additive noise were considered. This was mainly accomplished in the work \cite{Brzezniak_Zabczyk} by Brze\'zniak and Zabczyk  on existence and regularity of solutions by modelling the stable noise as a subordinated cylindrical Brownian motion. In the setting of generalised processes, the linear equation driven by a standard $\alpha$-stable process was considered by Riedle \cite{Riedle_stable}.

The reason for the lack of results for SPDEs driven by a multiplicative $\alpha$-stable noise is due to the generalised form of such processes not attaining values in the underlying Hilbert space. This in particular implies the lack of a L\'evy-It{\^o} decomposition, which is usually the very foundation for a theory of stochastic integration and for the derivation of the existence of a solution for stochastic differential equations. A theory of stochastic integration for the class of cylindrical L\'evy processes has been introduced in Jakubowski and Riedle \cite{Jakubowski_Riedle} by arguments avoiding the usual   L\'evy-It{\^o} decomposition. In the current article we continue this line of research and establish the existence of solution for \eqref{intro_stochastic_evolution_equation} without utilising a L\'evy-It{\^o} decomposition. For these reasons our arguments differ from approaches of other publications on similar results in comparable settings, and thus we believe it is worth to highlight the main steps in the following. 

Due to the lack of a L\'evy-It{\^o} decomposition,  we are bound to establish convergence of the Picard iteration in one piece, i.e.\ without utilising a decomposition of the driving noise into small and large jumps.
We succeed in establishing the existence result by  first showing that, if the time interval is sufficiently small, solutions of \eqref{intro_stochastic_evolution_equation} are pathwise unique, which follows from a version of Gronwall's inequality due to Willet and Wong \cite{Willett_Wong}. This step is followed by constructing a solution, mild in the analytical and weak in the probabilistic sense,  as a limit of the Picard iteration. For this purpose, we 
establish tightness of the Picard approximation by  Aldous' condition and a special version of the compact containment condition. This analysis is based on some fractional calculus with similar estimates as in Hausenblas \cite{Hausenblas}.  The previous step  enables us to conclude 
the existence of a solution as the  almost sure limit of the Picard iteration on another probability space by applying Skorokhod's theorem. The limit is identified as a solution of \eqref{intro_stochastic_evolution_equation} by a careful analysis of the increments of the Picard iteration. A general version of the Yamada--Watanabe theorem enables us to conclude the existence of solutions on any given probability space i.e.\ existence of the strong solution, which finally results in the existence of solution for \eqref{intro_stochastic_evolution_equation} on any time interval by gluing solutions on smaller intervals together. 

This approach is based on some distinctive arguments or methods highlighted in the following: 
\begin{itemize}
\item[(a)] The standard $\alpha$-stable process does not exist as a Hilbert space-valued stochastic process similar as the standard cylindrical Brownian motion does not.  But in the latter case, finite second moments and Gaussian distribution enable a rather straightforward calculus of  stochastic integration. In the setting of this work, we rely on the stochastic integration theory for cylindrical L\'evy processes developed in Jakubowski and Riedle \cite{Jakubowski_Riedle}. 
We obtain estimates for those stochastic integrals with respect to a standard $\alpha$-stable cylindrical L\'evy processes by generalising the arguments in Gin\'e and Marcus \cite{Gine_Marcus} in the finite-dimensional setting. To be precise, we show a bound on the tails of the integral
\begin{equation*}
\sup_{r>0} r^\alpha P\left( \sup_{t \in [0,T]} \norm{\int_0^t \Psi(s) \dL(s)} >r \right) 
\leq c_{2,\alpha} \E \left[ \int_0^T \HSnorm{\Psi(s)}^\alpha \ds \right].
\end{equation*}

\item[(b)] The integral operator $I(\Psi)=\int_0^T \Psi(s)\dL(s)$ for an $\alpha$-stable L\'evy process $L$ and an admissible stochastic process $\Psi$ only maps continuously from an $L^\alpha$-space of integrands to the $L^p$-space of random variables for $p\lneqq\alpha$; see Rosi\'nski and Woyczy\'nski \cite{Rosinski_Woyczynski_moment} or the inequality
$$\E \left[ \sup_{t \in [0,T]} \norm{\int_0^t \Psi(s) \dL(s)}^p \right]
\leq C_{\alpha,p} \left( \E \left[ \int_0^T \HSnorm{\Psi(s)}^\alpha \ds \right] \right)^{p/\alpha}, \qquad p< \alpha,$$
which we prove in Corollary \ref{pro_moment_inequality_2}.
Thus, since the domain of the integral operator is smaller than its range, we cannot apply a fixed point theorem. 
As a consequence, instead of working with moments, we derive convergence of the Picard iteration by tightness arguments in the Skorokhod space using a version of Aldous' condition and the compact containment condition. 


\item[(c)] Since the noise is modelled by a generalised process, not attaining values in the underlying Hilbert space, one cannot directly apply  Skorokhod's theorem to conclude existence of strong solutions on another probability space from existence of weak solutions. We circumvent this problem by decoding the cylindrical noise as a random variable in the space of c\`adl\`ag functions with values in $\R^\infty$, and re-building the noise after the application of Skorokhod's theorem.

\item[(d)] The setting of the equation under consideration requires  to apply the Yamada--Watanabe 
result in a rather generalised and abstract form due to Kurtz \cite{Kurtz_Yamada}. For this  purpose, we have to interpret the evolution equation as a `stochastic model' in the sense of Kurtz  with  the initial condition and the noise as the input variables and the solution as the output variable. In this abstract setting, the equation itself is considered as a measurable constraint on the input-output space.
\end{itemize}

We describe the content of the paper. Some preliminaries on $\alpha$-stable cylindrical L\'evy processes are presented in Section 2. In Section \ref{sec_tail_estimate}, we obtain tail inequalities for the stochastic integrals with respect to stable cylindrical L\'evy process $L$ on a Hilbert space. 
Section \ref{sec_auxiliary} contains auxiliary analytical lemmas regarding tightness and the Skorokhod topology used in the proof of existence and uniqueness.
Section \ref{sec_SPDE} comprises of all the steps mentioned above to conclude the existence of a mild solution of \eqref{intro_stochastic_evolution_equation}. 
It contains Theorem \ref{th_existence_weak_sol_main_result} on the existence and uniqueness of solution to the SPDE \eqref{intro_stochastic_evolution_equation}, which is the main result of the paper.

\section{Preliminaries}
\label{sec_preliminaries}

Let $H$ be a separable Hilbert space. 
We recall some facts about stable measures from \cite{Linde}. A probability measure $\mu$ on the $\sigma$-algebra $\mathcal{B}(H)$ of Borel subsets of $H$ is called stable if for every $n\in \N$ there exists $\gamma_n>0$ and $x_n \in H$ such that the characteristic function satisfies 
\begin{equation}
\label{stable_measure_definition}
\phi_\mu (h) ^n = \phi_\mu(\gamma_n h) e^{i\langle x_n, h \rangle} \qquad \text{for all } h \in H.
\end{equation} 
Each stable measure is infinitely divisible and its L\'evy measure $\nu$ can be written as 
$$\nu(B) = c_\alpha^{-1} \int_0^\infty \int_{S_H} \1_B(tx) \, \sigma(\ud x) t^{-1-\alpha} \dt, \qquad B\in \mathcal{B}(H),$$
where $S_H$ is the sphere of radius $1$ in $H$, $\sigma$ is a finite measure on $S_H$ and $c_\alpha$ is defined as
\begin{equation*}
c_\alpha := 
\begin{cases}
-\alpha \cos(\frac{\alpha \pi}2) \Gamma(-\alpha), & \text{for } \alpha \neq 1, \\
\frac{\pi}2, & \text{for } \alpha = 1.
\end{cases}
\end{equation*}
The measure $\sigma$ is called the spectral measure of $\mu$ and it can be recovered from $\nu$ as
\begin{equation}
\label{formula_for_spectral_measure}
\sigma(B) = \alpha c_\alpha \nu\left( \left\{ x \in H : \norm{x}>1, \frac{x}{\norm{x}} \in B \right\} \right), \qquad B\in \mathcal{B}(S_H).
\end{equation}
By \cite[Prop.\ 7.5.4(iv)]{Linde} for every Hilbert space $H$ there exists a constant $c>0$ such that for every stable measure $\mu$ on $\mathcal{B}(H)$
\begin{equation}
\label{sup_bounded_by_lim}
\sup_{r>0} r^\alpha \mu(\norm{x}>r) \leq c \lim_{r \to \infty} r^\alpha \mu(\norm{x}>r).
\end{equation}

We recall definitions and properties of cylindrical random variables and stable cylindrical L\'evy processes. 
We denote the space of equivalence classes of real-valued random variables on $\Omega$ by $L^0(\Omega,\mathcal{F},P)$. It is a metric space under the Ky Fan metric, which induces the convergence in probability.
Let $U$ be a separable Hilbert space, whose dual space is identified with $U$. A cylindrical random variable is a linear and continuous mapping $X\colon U \to L^0(\Omega,\mathcal{F},P)$. Its characteristic function is a function $\phi_X \colon U \to \C$ defined by $\phi_X(u) = \E \left[ e^{iXu} \right]$.

Let $(\mathcal{F}_t)$ be a filtration on $\Omega$. A cylindrical L\'evy process is a family of cylindrical random variables $\big( L(t) : t \geq 0 \big)$ with $L(t)\colon U \to L^0(\Omega,\mathcal{F},P)$ such that all projections $\big( (L(t)u_1,\ldots, L(t)u_n) : t\geq 0 \big)$
for $n\in \N$ and $u_1,\ldots, u_n \in U$, define L\'evy processes in $\R^n$ with respect to $(\mathcal{F}_t)$.
Let $\alpha \in (0,2]$. A canonical cylindrical $\alpha$-stable L\'evy process is a cylindrical L\'evy process whose characteristic function satisfies $\phi_{L(t)}(u) = \e^{-t \norm{u}^\alpha}$ for $u \in U$; see \cite{Applebaum_Riedle,Riedle_stable}.

Cylindrical processes are intertwined with the concept of cylindrical measures, which we now recall.
For $\Delta \subset U$ let $\mathcal{Z}(U,\Delta)$ be the collection of the cylindrical sets
$$C(u_1,\ldots,u_n;B):= \{ u \in U : (\langle u,u_1 \rangle, \ldots, \langle u,u_n\rangle) \in B \},$$
where $u_1,\ldots,u_n \in \Delta$ and $B\in \mathcal{B}(\R^n)$. The family of all cylindrical subsets of $U$ is $\mathcal{Z}(U) := \mathcal{Z}(U,U)$.
A mapping $\mu\colon  \mathcal{Z}(U)\to \R_+$ such that its restriction to $\mathcal{Z}(U,\Delta)$ is a measure for any finite set $\Delta$ is called a cylindrical measure. 

For the cylindrical distribution of $L(1)$ defined by
$$\mu(C(u_1,\ldots,u_n;B)) = P( (L(1)u_1,\ldots, L(1)u_n) \in B)
\qquad\text{for }B\in\mathcal{B}(\R^n), $$ 
one can derive the counterpart of the L\'evy-Khintchine formula, see \cite{Riedle_infinitely,Riedle_stable}. It is characterised by the so-called cylindrical L\'evy measure
$\nu\colon \mathcal{Z}(U)\to [0,\infty]$ which satisfies
\begin{equation}
\label{cyl_Levy_measure_of_stable}
\nu \circ \pi_{e_1,\ldots,e_n}^{-1}(B) = \frac{\alpha}{c_\alpha} \int_{S_{\R^n}} \int_0^\infty \1_B(rx) \frac1{r^{1+\alpha}} \dr \, \lambda_n(\ud x),\qquad B \in\mathcal{B}(\R^n),
\end{equation}
where $(e_k)$ is an orthonormal basis of $U$, $\pi_{e_1,\ldots,e_n}\colon  U \to \R^n$ is given by $\pi_{e_1,\ldots,e_n}(u) = (\langle u,e_1\rangle, \ldots ,\langle u,e_n\rangle)$ and the measure $\lambda_n$ on $S_{\R^n}$ is uniform with the total mass
\begin{equation}
\lambda_n(S_{\R^n}) = \frac{\Gamma(\frac12) \Gamma(\frac{n+\alpha}2)}{\Gamma(\frac{n}2) \Gamma(\frac{1+\alpha}2)}, 
\end{equation}
where $\Gamma$ denotes the Gamma function. 

Stochastic integration with respect to cylindrical L\'evy processes was introduced in \cite{Jakubowski_Riedle}. We denote the space of the Hilbert-Schmidt operators from $U$ to $H$ by $L_{\rm HS}(U,H)$.
In \cite{Jakubowski_Riedle} the class $\Lambda:=\Lambda(U,H)$ of admissible integrands  consists of $L_{\rm HS}(U,H)$-valued adapted processes on $[0,T]$ with c\`agl\`ad  paths.  
The integral process is a c\`adl\`ag $H$-valued semimartingale and the following continuity property holds: if $\Psi_n \to \Psi$ in probability in the Skorokhod space of c\`agl\`ad functions on $[0,T]$ with values in $L_{\rm HS}(U,H)$, then for all $t\in [0,T]$
$$\int_0^t \Psi_n(s) \dL(s) \to \int_0^t \Psi(s) \dL(s)$$
in probability for all $t\in [0,T]$.
The Hilbert-Schmidt operators play a crucial role here, since they map cylindrical random variables into classical ones: if $\psi \in L_{\rm HS}(U,H)$, then for any $t\geq 0$ there exists an $\mathcal{F}_t$-measurable random variable $\psi(L(t)) \colon \Omega \to H$ such that for all $h\in H$ one has $L(t)(\psi^*h) = \langle \psi(L(t)),h \rangle$.

We recall some facts about the fractional powers of positive operators, which can be found e.g.\ in \cite{Lunardi,Pazy}. Suppose that $A$ is a generator of a strongly continuous semigroup $(S(t) : t \geq 0)$ on $H$ and assume that $\{0\} \cup \{ \lambda \in {\mathbb C} : 0<\omega<\abs{\arg \lambda} \leq \pi\}$ is contained in the resolvent set $\rho(-A)$ for some $\omega<\frac{\pi}{2}$.
One defines for $\delta>0$
\begin{equation}
(-A)^{-\delta} 
= \frac{1}{2\pi i} \int_\gamma \lambda^{-\delta} (\lambda+A)^{-1} \, \ud \lambda,
\end{equation}
where $\gamma$ is a curve running from $\infty e^{i\theta}$ to $\infty e^{-i\theta}$ with $\omega < \theta < \pi$ avoiding the negative axis and the origin. It can be shown that for $\delta>0$ the operator $(-A)^{-\delta}$ is bounded and one-to-one.
We define for $\delta>0$
$$(-A)^{\delta} \colon  D((-A)^\delta):= \mathcal{R}((-A)^{-\delta}) \to H, \qquad (-A)^{\delta} := \left( (-A)^{-\delta} \right)^{-1}.$$
The operators $(-A)^\delta$ are closed. 
We equip the space $D((-A)^\delta)$ with the norm defined as $\norm{x}_{D((-A)^\delta)} := \bignorm{(-A)^\delta x}$ for $x \in D((-A)^\delta)$.
This norm is equivalent to the graph norm.
The embedding $D((-A)^\delta) \subset H$ is continuous.
With this notation in mind, \cite[Th.\ 2.6.13(d)]{Pazy} states that for all $\delta \in (0,1]$ one has for some $C>0$
\begin{equation}
\label{norm_continuity_of_semigroup}
\norm{S(t)-\Id}_{L(D((-A)^\delta),H)} \leq C t^\delta,
\end{equation}
where $\Id$ denotes the identity operator on $H$.

\section{Tail and moment estimates}
\label{sec_tail_estimate}

In this section we assume that $\alpha \in (0,2)$. We prove a tail estimate for a Radonified stable cylindrical measure and then we generalise the tail estimate of the stochastic integral to the case of an integral with respect to a stable cylindrical process. Finally, we derive a moment inequality for that stochastic integral.

\begin{Lemma}
\label{lem_tail_estimate_radonified}
For any canonical $\alpha$-stable cylindrical L\'evy process $L$ on $U$ and $\psi \in L_{\rm HS}(U,H)$ we have
\begin{equation}
\label{tail_of_radonified_Levy_measure}
\sup_{r>0} r^\alpha P( \norm{\psi(L(t))} > r)
\leq  c t\left( \nu \circ \psi^{-1}\right) (\bar B_H^c) 
\leq c_{1,\alpha} t \HSnorm{\psi}^\alpha,
\end{equation}
where $\bar B_H^c$ is the complement of the closed unit ball, $c_{1,\alpha} = \tfrac{ c \Gamma(\frac12)}{c_\alpha \Gamma(\frac{1+\alpha}2)}$
and $c$ is the constant (depending on $\alpha$) appearing in \eqref{sup_bounded_by_lim}.
\end{Lemma}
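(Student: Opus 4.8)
The plan is to identify the law of the Radonified variable $\psi(L(t))$ as a genuine stable measure on $H$, apply the abstract tail bound \eqref{sup_bounded_by_lim}, and then evaluate the resulting limit through the Lévy measure. For the first step I would compute the characteristic function of $\psi(L(t))$: using the defining property $L(t)(\psi^* h) = \langle \psi(L(t)), h\rangle$ together with the canonical form $\phi_{L(t)}(u) = \e^{-t\norm{u}^\alpha}$, one obtains
$$\phi_{\psi(L(t))}(h) = \E\!\left[\e^{\ii L(t)(\psi^* h)}\right] = \e^{-t\norm{\psi^* h}^\alpha}, \qquad h \in H.$$
Comparing with \eqref{stable_measure_definition} (with $\gamma_n = n^{1/\alpha}$ and $x_n = 0$) shows that $\mu_t := P\circ(\psi(L(t)))^{-1}$ is a symmetric stable measure on $\mathcal{B}(H)$, so \eqref{sup_bounded_by_lim} applies and yields
$$\sup_{r>0} r^\alpha P(\norm{\psi(L(t))}>r) \leq c\lim_{r\to\infty} r^\alpha \mu_t(\norm{x}>r).$$

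Next I would relate this limit to the Lévy measure. Since $\mu_t$ is stable, its Lévy measure is $t(\nu\circ\psi^{-1})$, where $\nu\circ\psi^{-1}$ is the Radonification of the cylindrical Lévy measure $\nu$ of $L(1)$; being the Lévy measure of a stable law it has the homogeneous form recalled at the start of Section \ref{sec_preliminaries}, so that $r^\alpha(\nu\circ\psi^{-1})(\norm{x}>r)$ is constant in $r$ and equal to $(\nu\circ\psi^{-1})(\bar B_H^c)$. Invoking the standard asymptotic equivalence between the tail of a stable distribution and the tail of its Lévy measure (from \cite{Linde}), the displayed limit equals $t(\nu\circ\psi^{-1})(\bar B_H^c)$, which is the first inequality in \eqref{tail_of_radonified_Levy_measure}.

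The second inequality is the analytic heart of the argument, namely bounding $(\nu\circ\psi^{-1})(\bar B_H^c)$ by a multiple of $\HSnorm{\psi}^\alpha$. Using a singular value decomposition $\psi = \sum_k s_k\langle \cdot, e_k\rangle f_k$ with $\sum_k s_k^2 = \HSnorm{\psi}^2$, I would evaluate the Radonified tail as the increasing limit of $\nu(\{u: \norm{\psi_n u}>1\})$ over the finite-rank truncations $\psi_n = \sum_{k\le n} s_k\langle \cdot, e_k\rangle f_k$. Each such set is cylindrical based on $e_1,\ldots,e_n$, so \eqref{cyl_Levy_measure_of_stable} applies, and carrying out the radial integral gives
$$\nu(\{u: \norm{\psi_n u}>1\}) = \frac{1}{c_\alpha}\int_{S_{\R^n}}\Big(\sum_{k=1}^n s_k^2 x_k^2\Big)^{\alpha/2}\lambda_n(\ud x).$$
Writing $\lambda_n$ as its total mass times the uniform probability measure $\omega_n$ on $S_{\R^n}$, concavity of $t\mapsto t^{\alpha/2}$ (Jensen, using $\alpha/2<1$) together with $\int_{S_{\R^n}} x_k^2\,\omega_n(\ud x) = 1/n$ bounds the integral by $n^{-\alpha/2}(\sum_k s_k^2)^{\alpha/2}$. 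Combining with the mass of $\lambda_n$ and the log-convexity estimate $\Gamma(\tfrac n2 + \tfrac\alpha2)/\Gamma(\tfrac n2) \leq (n/2)^{\alpha/2}$, the dimensional factors collapse to $2^{-\alpha/2}\leq 1$, leaving $(\nu\circ\psi^{-1})(\bar B_H^c) \leq \frac{\Gamma(1/2)}{c_\alpha\Gamma((1+\alpha)/2)}\HSnorm{\psi}^\alpha$; together with the factor $c$ from the first inequality this produces the constant $c_{1,\alpha}$.

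The step I expect to be most delicate is the passage between the cylindrical and the genuine (Radonified) Lévy measure: the sets $\{u:\norm{\psi_n u}>1\}$ live in different finite cylinder algebras and their union is not itself cylindrical, so establishing that $\nu(\{u:\norm{\psi_n u}>1\})$ increases to $(\nu\circ\psi^{-1})(\bar B_H^c)$ must rely on the Radonification construction rather than a naive monotone-convergence argument on $\nu$. The appeal to the distribution/Lévy-measure tail equivalence for stable laws on $H$ should likewise be pinned to a precise statement from \cite{Linde}.
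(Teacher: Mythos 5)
Your proposal is correct and follows the paper's skeleton quite closely: the characteristic function computation identifying $\psi(L(t))$ as stable, the reduction via \eqref{sup_bounded_by_lim}, the identification of the limiting tail with $t\left(\nu\circ\psi^{-1}\right)(\bar B_H^c)$ through \cite{Linde} (the paper uses \cite[Cor.\ 6.7.3]{Linde} together with \eqref{formula_for_spectral_measure}, which is exactly your ``tail equivalence''), and the finite-dimensional evaluation via \eqref{cyl_Levy_measure_of_stable} followed by Jensen's inequality are all as in the paper. The one place where you genuinely diverge is the step you yourself flag as delicate, and it is worth recording how the paper handles it, since it does \emph{not} use monotone convergence at all: the paper proves the limit \eqref{eq.convergence-total-sigma} probabilistically, observing that $P_n\psi\to\psi$ in $L_{\rm HS}(U,H)$ forces $P_n\psi(L(1))\to\psi(L(1))$ in probability by the continuity of the stochastic integral \cite[Th.\ 5.1]{Jakubowski_Riedle}, whence the spectral measures $\sigma_n$ converge weakly to $\sigma$ \cite[Prop.\ 6.6.5]{Linde} and their total masses converge \cite[Prop.\ 1.3.2]{Linde}; formula \eqref{formula_for_spectral_measure} then translates total masses into $\left(\nu\circ\psi^{-1}\circ P_n^{-1}\right)(\bar B_H^c)$. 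Your alternative monotone-convergence route can indeed be closed, but it requires making explicit two facts you only gesture at: first, that $\nu\circ\psi^{-1}$ is a genuine $\sigma$-additive L\'evy measure on $\mathcal{B}(H)$ because $\psi$ is Hilbert--Schmidt (the paper records this), so that monotone convergence is legitimate on it even though it is not on the cylindrical $\nu$; and second, the compatibility $\left(\nu\circ\psi^{-1}\right)\bigl(\{x:\norm{P_nx}>1\}\bigr)=\nu\circ\pi_{e_1,\ldots,e_n}^{-1}\bigl(\{y:\textstyle\sum_{j\le n}\gamma_j^2y_j^2>1\}\bigr)$, which follows from uniqueness of the finite-dimensional L\'evy--Khintchine representation applied to $\bigl(\langle\psi(L(1)),f_1\rangle,\ldots,\langle\psi(L(1)),f_n\rangle\bigr)$. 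Given these, $\norm{P_nx}\nearrow\norm{x}$ yields $\{\norm{P_nx}>1\}\nearrow\bar B_H^c$ and your limit claim holds. Finally, a small genuine improvement in your version: where the paper takes $n\to\infty$ using $\Gamma(x+\beta)/(\Gamma(x)x^\beta)\to 1$, you use the Wendel-type inequality $\Gamma(\tfrac n2+\tfrac\alpha2)\le\Gamma(\tfrac n2)\left(\tfrac n2\right)^{\alpha/2}$ (valid by log-convexity since $\alpha/2<1$), which bounds the dimensional constant uniformly in $n$, recovers the dropped factor $2^{-\alpha/2}\le 1$ for every $n$, and so needs no limiting argument for the constant itself.
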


\begin{proof}
Note that the characteristic function of $\psi(L(t))$ is given by 
$$\phi_{\psi(L(t))}(h) 
= \E \left[ e^{i L(t)(\psi^* h)} \right] 
= e^{-t \norm{\psi^*h}^\alpha} \qquad \text{for } h \in H,$$
and thus it is a stable random variable, cf.\ \eqref{stable_measure_definition}.
It follows by \eqref{sup_bounded_by_lim} that
\begin{equation}
\label{sup_bounded_by_lim_radonified}
\sup_{r>0} r^\alpha P(\norm{\psi(L(t))}>r) 
\leq c \lim_{r\to \infty} r^\alpha P(\norm{\psi(L(t))}>r).
\end{equation}
The L\'evy measure of the infinitely divisible random variable $\psi(L(t))$ is $t\left(\nu \circ \psi^{-1}\right)$. 
Note that since $\psi$ is Hilbert-Schmidt, the measure $\nu \circ \psi^{-1}$ is in fact a genuine L\'evy measure.

Let $\sigma$ denote the spectral measure of the stable random variable $\psi(L(1))$. 
Combining \cite[Cor.\ 6.7.3]{Linde} and formula \eqref{formula_for_spectral_measure} we get
\begin{equation}
\label{lim_equals_Levy_measure}
\lim_{r\to \infty} r^\alpha P(\norm{\psi(L(t)))}>r)
= t \frac{\sigma(S_H)}{\alpha c_\alpha}
= t\left( \nu \circ \psi^{-1} \right)(\bar B_H^c).
\end{equation}
Now the first inequality in \eqref{tail_of_radonified_Levy_measure} follows from \eqref{sup_bounded_by_lim_radonified} and \eqref{lim_equals_Levy_measure}. 

For establishing  the second inequality in \eqref{tail_of_radonified_Levy_measure}, note that 
the operator $\psi$ has the decomposition
$$\psi = \sum_{n=1}^\infty \gamma_n (e_n \otimes f_n),$$
where $(e_n)$ is an orthonormal system in $U$, $(f_n)$ is an orthonormal system in $H$ and $(\gamma_n) \subset \R$, see \cite[Th.\ 4.1]{Diestel}.
Let $P_n\colon H \to H$ be the projection onto $\Span(f_1, \ldots, f_n)$. Since $P_n\psi \to \psi$ in $L_{\rm HS}(U,H)$, it follows by the continuity of the integral in \cite[Th.\ 5.1]{Jakubowski_Riedle} that $P_n\psi(L(1)) = \int_0^1 P_n\psi \dL(s)$ converges in probability to $\psi(L(1)) = \int_0^1 \psi \dL(s)$ as $n\to \infty$. Proposition~6.6.5 in \cite{Linde} implies that the spectral measures $\sigma_n$ of $(P_n\psi)(L(1))$ converge weakly to the spectral measure $\sigma$ of $\psi(L(1))$. 
By \cite[Prop.\ 1.3.2.]{Linde} the total mass of $\sigma_n$ converges, i.e.\ $\sigma_n(S_H) \to \sigma(S_H)$ as $n \to \infty$.
It follows from \eqref{formula_for_spectral_measure} that $\sigma(S_H) = \alpha c_\alpha \left( \nu \circ \psi^{-1} \right) (\bar B_H^c)$ and $\sigma_n(S_H) = \alpha c_\alpha \left( \nu \circ \psi^{-1} \circ P_n^{-1} \right) (\bar B_H^c)$ and thus
\begin{align}
\label{eq.convergence-total-sigma}
\left( \nu \circ \psi^{-1} \right)(\bar B_H^c) = \lim_{n\to \infty} \left( \nu \circ \psi^{-1} \circ P_n^{-1} \right) (\bar B_H^c).
\end{align}
We calculate by \eqref{cyl_Levy_measure_of_stable}
\begin{align*}
\left( \nu \circ \psi^{-1} \circ P_n^{-1} \right)(\bar B_H^c)
&= \nu \circ \pi_{e_1,\ldots,e_n}^{-1}\bigg( \bigg\{ x \in \R^n : \sum_{j=1}^n \gamma_j^2 x_j^2 >1 \bigg\} \bigg) \\
&= \frac{\alpha}{c_\alpha} \int_{S_{\R^n}} \int_0^\infty \1_{\left\{y \in \R^n : \sum_{j=1}^n  \gamma_j^2 y_j^2>1 \right\}}(rx) \frac1{r^{1+\alpha}} \dr \, \lambda_n(\ud x) \\
&= \frac{1}{c_\alpha} \int_{S_{\R^n}} \bigg(\sum_{j=1}^n  \gamma_j^2 x_j^2\bigg)^{\alpha/2} \, \lambda_n(\ud x).
\end{align*}
Denote the probability measure $\lambda_n^{(1)} := \tfrac{1}{\lambda_n(S_{\R^n})} \lambda_n$.
Jensen's inequality implies that
\begin{align*}
\left(\nu \circ \psi^{-1} \circ P_n^{-1}\right)(\bar B_H^c)
&\leq \frac{\lambda_n(S_{\R^n})}{c_\alpha} \Bigg( \sum_{j=1}^n \gamma_j^2 \int_{S_{\R^n}}  x_j^2  \, \lambda_n^{(1)}(\ud x)  \Bigg)^{\alpha/2}= \frac{\lambda_n(S_{\R^n})}{c_\alpha n^{\alpha/2}} \Bigg( \sum_{j=1}^n \gamma_j^2 \Bigg)^{\alpha/2},
\end{align*} 
because $\int_{S_{\R^n}}  x_j^2 \, \lambda_n^{(1)}(\ud x) = \frac1{n}$ for all $j=1,\ldots,n$.
Recalling  $\frac{\Gamma(x+\beta)}{\Gamma(x)x^\beta} \to 1$ as $x \to \infty$ and taking  the limit
as $n\to\infty$ in the inequality above completes the proof due to \eqref{eq.convergence-total-sigma}.
\end{proof}

\begin{Theorem}
\label{th_tail_estimate}
Any stochastic process $\Psi$ in the space $\Lambda$ of admissible integrands satisfies
\begin{equation}
\label{integral_tail_estimate}
\sup_{r>0} r^\alpha P\left( \sup_{t \in [0,T]} \norm{\int_0^t \Psi(s) \dL(s)} >r \right) 
\leq c_{2,\alpha} \E \left[ \int_0^T \HSnorm{\Psi(s)}^\alpha \ds \right]
\end{equation}
with $c_{2,\alpha} = c_{1,\alpha} \frac{4-\alpha}{2-\alpha}$.
\end{Theorem}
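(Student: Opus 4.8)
The plan is to prove \eqref{integral_tail_estimate} first for elementary (step) integrands and then to transfer it to all of $\Lambda$ by an approximation that is itself powered by the step-integrand estimate. We may assume that $\E[\int_0^T\HSnorm{\Psi(s)}^\alpha\ds]$ is finite, for otherwise there is nothing to prove. The decisive structural observation is that, although $L$ is only cylindrical, each Radonified increment $\psi(L(t))$ with $\psi\in L_{\rm HS}(U,H)$ is a genuine $H$-valued infinitely divisible variable whose L\'evy measure is $\nu\circ\psi^{-1}$, and by the computation inside the proof of Lemma~\ref{lem_tail_estimate_radonified} this measure satisfies $(\nu\circ\psi^{-1})(\bar B_H^c)\le \tfrac{c_{1,\alpha}}{c}\HSnorm{\psi}^\alpha$ together with the $\alpha$-homogeneity $(\nu\circ\psi^{-1})(\{\norm{x}>\rho\})=\rho^{-\alpha}(\nu\circ\psi^{-1})(\bar B_H^c)$. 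Thus, \emph{after} Radonification by a Hilbert--Schmidt operator, the usual small-jump/large-jump machinery becomes available even in the absence of a L\'evy--It\^o decomposition of $L$ itself.

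\textbf{Truncation for step integrands.} For a step process $\Psi=\sum_{k=0}^{m-1}\psi_k\1_{(t_k,t_{k+1}]}$ with $\mathcal F_{t_k}$-measurable $\psi_k$, fix a level $r>0$, set $M(t)=\int_0^t\Psi\dL$, and split the jumps of each Radonified piece at radius $r$. Let $\tilde M$ be the integral carried by the jumps of size $\le r$, so that on the event that no jump of size $>r$ occurs one has $M\equiv\tilde M$, whence
\begin{equation*}
\Big\{\sup_{t\in[0,T]}\norm{M(t)}>r\Big\}\subset\{\text{a Radonified jump of size}>r\text{ occurs}\}\cup\Big\{\sup_{t\in[0,T]}\norm{\tilde M(t)}>r\Big\}.
\end{equation*}
Because $L$ is symmetric, both the small- and the large-jump parts have symmetric, hence centred, increments, so no compensator is needed and $\tilde M$ is a martingale; this symmetry is what lets the argument run uniformly over $\alpha\in(0,2)$.

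\textbf{Estimating the two events.} For the large jumps I would condition on $\mathcal F_{t_k}$ on each subinterval, where $\psi_k$ is frozen and the increment of $L$ is independent with the law of $L(t_{k+1}-t_k)$; the conditional expected number of jumps of $\psi_k(L(\cdot)-L(t_k))$ exceeding $r$ is $(t_{k+1}-t_k)\,r^{-\alpha}(\nu\circ\psi_k^{-1})(\bar B_H^c)$. Summing over $k$ with the tower property and bounding $P(\exists\text{ large jump})$ by the expected number yields a term of order $r^{-\alpha}\,\E[\int_0^T\HSnorm{\Psi(s)}^\alpha\ds]$. For the small jumps, $\norm{\tilde M}^2$ is a submartingale, so Doob's maximal inequality gives $P(\sup_t\norm{\tilde M(t)}>r)\le r^{-2}\,\E\norm{\tilde M(T)}^2$, and the $L^2$-isometry for the compensated small-jump integral combined with the radial computation
\begin{equation*}
\int_{\norm{x}\le r}\norm{x}^2\,(\nu\circ\psi^{-1})(\ud x)=\frac{\alpha}{2-\alpha}\,(\nu\circ\psi^{-1})(\bar B_H^c)\,r^{2-\alpha}
\end{equation*}
produces the characteristic factor $\tfrac{1}{2-\alpha}$. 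Adding the two contributions and collecting constants (invoking \eqref{sup_bounded_by_lim} for the large-jump tail, which carries the factor $c$ hidden in $c_{1,\alpha}$) gives \eqref{integral_tail_estimate} with $c_{2,\alpha}=c_{1,\alpha}\tfrac{4-\alpha}{2-\alpha}$ for step integrands.

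\textbf{Passage to general integrands, and the main obstacle.} To reach arbitrary $\Psi\in\Lambda$ I would approximate a c\`agl\`ad $\Psi$ with finite $\E[\int_0^T\HSnorm{\Psi}^\alpha\ds]$ by its left-point discretisations $\Psi_n$, which are step processes with $\E[\int_0^T\HSnorm{\Psi_n-\Psi}^\alpha\ds]\to0$. Applying the already-established step-integrand estimate to the differences $\Psi_n-\Psi_m$ shows that $\sup_{t}\norm{\int_0^t\Psi_n\dL-\int_0^t\Psi_m\dL}$ is Cauchy in probability, so the integral processes converge uniformly in probability; since $X\mapsto\sup_{r>0}r^\alpha P(X>r)$ is lower semicontinuous under convergence in probability, the estimate passes to the limit. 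The main obstacle is exactly the interplay between adaptedness and the supremum: adaptedness forbids treating $\int_0^T\Psi\dL$ as a single stable vector (to which Lemma~\ref{lem_tail_estimate_radonified} would apply in one stroke), forcing the increment-by-increment conditioning; and the absence of a L\'evy--It\^o decomposition of $L$ means the supremum over $t$ must be captured through the martingale maximal inequality for the truncated part and a direct intensity count for the large jumps, rather than by any pathwise jump decomposition of the driving noise.
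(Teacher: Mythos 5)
Your proposal is correct in substance but follows a genuinely different technical route from the paper's proof. The paper never touches the pathwise jump structure of the noise: for a simple integrand it truncates the \emph{discrete increments} $\Psi_i\big(L(t_{i+1})-L(t_i)\big)$ at norm level $r$, splits the maximal probability into $p_1+p_2$, bounds $p_1$ by the distributional tail estimate of Lemma~\ref{lem_tail_estimate_radonified} (this is where \eqref{sup_bounded_by_lim}, hence the constant $c$, enters), bounds $p_2$ by Doob's inequality for the discrete martingale of truncated increments together with a layer-cake computation, and only then recovers the supremum over $[0,T]$ through a limit along refining partitions using the c\`adl\`ag property of the integral process. You instead truncate the \emph{jumps} of the Radonified pieces at radius $r$, i.e.\ you reinstate the L\'evy--It\^o decomposition at the Radonified level — legitimate, since for frozen $\psi_k$ each $\psi_k\big(L(\cdot)-L(t_k)\big)$ is a genuine $H$-valued L\'evy process with L\'evy measure $\nu\circ\psi_k^{-1}$, but exactly the machinery the paper advertises avoiding. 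Your route buys two things: the supremum over continuous time comes in one stroke from Doob applied to the continuous-time truncated martingale $\tilde M$, with no partition-refinement limit; and the large-jump event is controlled by an exact Poisson intensity count via the $\alpha$-homogeneity of $\nu\circ\psi_k^{-1}$, so \eqref{sup_bounded_by_lim} is in fact \emph{not} needed there, contrary to your parenthetical — your constant comes out as $\tfrac{2}{2-\alpha}\,c_{1,\alpha}/c$, which is at most $c_{2,\alpha}$ since necessarily $c\ge 1$, so the stated bound holds. The price is foundational work you only gesture at and would need to write out: c\`adl\`ag versions of the Radonified pieces consistent with the integral process, the conditional L\'evy--It\^o decomposition with $\mathcal{F}_{t_k}$-measurable random coefficients, vanishing of the small-jump compensator by symmetry, and the Hilbert-space $L^2$-isometry for the compensated small-jump integral. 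One misstatement to fix: for $\alpha\le 1$ the large-jump part has no first moment, so it is not ``centred''; this is harmless because you never use it — only $\tilde M$ must be a martingale, and it is, as the $L^2$-limit of compensated (driftless, by symmetry) small-jump integrals. Your passage to general $\Psi\in\Lambda$ coincides with the paper's: Cauchy in uniform convergence in probability via the simple-process estimate, then lower semicontinuity of the tail functional, which the paper phrases through the Portmanteau theorem.
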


\begin{proof}
In the proof we follow \cite[Th.\ 4.3]{Balan} and \cite[Lem.\ 3.3]{Gine_Marcus}.
Fix a simple process $\Psi$, which takes only finitely many values and is based on a partition $0=s_1<t_2<\ldots<s_K=T$, and denote for $t \in [0,T]$
$$I(t) := \int_0^t \Psi(s) \dL(s).$$
Let $\{t_1,\ldots,t_N\}$ be a partition of $[0,T]$ containing $\{s_1,\ldots,s_K\}$.
One can write $\Psi$ as
$$\Psi = \Psi_0 \1_{\{0\}} + \sum_{i=1}^{N-1} \Psi_i \1_{(t_i,t_{i+1}]}, \qquad \Psi_i = \sum_{j=1}^{m_i} \1_{A_{i,j}} \psi_{i,j},$$
where for each $i=1,\ldots ,N-1$ the sets $A_{i,1},\ldots, A_{i,m_i} \in \mathcal{F}_{t_i}$ form a partition of $\Omega$.
Then
$$I(T) 
= \sum_{i=1}^{N-1} \Psi_i(L(t_{i+1})-L(t_i))
= \sum_{i=1}^{N-1} \sum_{j=1}^{m_i} \psi_{i,j}(L(t_{i+1})-L(t_i))\1_{A_{i,j}}.$$
We have
\begin{align}
\label{split_the_probability_into_two}
&P\left( \max_{i=1,\ldots,N} \norm{I(t_i)} >r \right) \notag \\
&\quad\quad\leq \sum_{i=1}^{N-1} P\left( \norm{\Psi_i(L(t_{i+1})-L(t_i))} > r \right) \notag \\
&\quad\quad\quad + P \left( \max_{k=1,\ldots,N-1} \norm{\sum_{i=1}^{k} \Psi_i(L(t_{i+1})-L(t_i)) \1_{\{\smallnorm{\Psi_i(L(t_{i+1})-L(t_i))} \leq r\}}} >r \right)  \notag \\
&\quad\quad =: p_1 + p_2 .
\end{align}
We estimate the terms $p_1$ and $p_2$ separately.
Since for fixed $i$ the sets $A_{i,j}$ are independent from $\psi_{i,j}(L(t_{i+1})-L(t_i))$ for all $j=1,\dots, m_i$, we obtain
\begin{align}
\label{calculation_by_independence}
P\left(\norm{\Psi_i(L(t_{i+1})-L(t_i))}>r\right) 
&= P\Bigg( \Bigg\Vert \sum_{j=1}^{m_i} \1_{A_{i,j}} \psi_{i,j}(L(t_{i+1})-L(t_i)) \Bigg\Vert > r \Bigg) \notag \\
&= \sum_{j=1}^{m_i} P(A_{i,j}) P\left(\norm{\psi_{i,j}(L(t_{i+1})-L(t_i)}>r\right).
\end{align}
From Lemma \ref{lem_tail_estimate_radonified} we obtain
\begin{align*}
P\left(\norm{\Psi_i(L(t_{i+1})-L(t_i))}>r\right) 
&\leq c_{1,\alpha} r^{-\alpha} (t_{i+1}-t_i) \sum_{j=1}^{m_i} P(A_{i,j}) \HSnorm{\psi_{i,j}}^\alpha \\
&= c_{1,\alpha} r^{-\alpha} (t_{i+1}-t_i) \E \left[ \HSnorm{\Psi_i}^\alpha \right].
\end{align*}
Consequently, we obtain that
$$p_1 \leq c_{1,\alpha} r^{-\alpha} \E \left[ \int_0^T \HSnorm{\Psi(s)}^\alpha \ds \right].$$
We estimate the second term on the right-hand side of \eqref{split_the_probability_into_two}.
It follows from the symmetry of the cylindrical distribution of $L(t)-L(s)$ that the Radonified random variables $\psi(L(t)-L(s))$ are symmetric as well and thus the following discrete process is a martingale
$$\left( \sum_{i=1}^k \Psi_i(L(t_{i+1})-L(t_i)) \1_{\{\smallnorm{\Psi_i(L(t_{i+1})-L(t_i))} \leq r\}} : k=1,
\ldots, N-1 \right).$$
Doob's inequality implies 
\begin{align}
\label{estimate_of_p_2}
p_2
&\leq r^{-2}  \E \left[ \norm{\sum_{i=1}^{N-1} \Psi_i(L(t_{i+1})-L(t_i))}^2 \1_{\{\smallnorm{\Psi_i(L(t_{i+1})-L(t_i))} \leq r\}} \right] \notag \\
&= r^{-2}  \sum_{i=1}^{N-1} \E \left[ \norm{\Psi_i(L(t_{i+1})-L(t_i))}^2 \1_{\{\smallnorm{\Psi_i(L(t_{i+1})-L(t_i))} \leq r\}} \right],
\end{align}
where the last equality follows from the orthogonality in $L^2(\Omega,\mathcal{F},P)$ of the summands
for different indices.  
We write similarly to \eqref{calculation_by_independence}
\begin{multline}
\label{estimate_of_single_term}
\E \left[ \norm{\Psi_i(L(t_{i+1})-L(t_i))}^2 \1_{\{\smallnorm{\Psi_i(L(t_{i+1})-L(t_i))} \leq r\}} \right] \\
= \sum_{j=1}^{m_i} P(A_{i,j}) \E \left[ \norm{\psi_{i,j}(L(t_{i+1})-L(t_i))}^2 \1_{\{\smallnorm{\psi_{i,j}(L(t_{i+1})-L(t_i))} \leq r\}}  \right].
\end{multline}
Fubini's theorem and Lemma \ref{lem_tail_estimate_radonified} imply that
\begin{align}\label{estimate_of_expectation}
&\E \left[ \norm{\psi_{i,j}(L(t_{i+1})-L(t_i))}^2 \1_{\{\smallnorm{\psi_{i,j}(L(t_{i+1})-L(t_i))} \leq r\}}  \right]\notag \\
&\qquad\qquad = 2 \int_0^r t P\left(t<\norm{\psi_{i,j}(L(t_{i+1})-L(t_i))} \leq r\right) \dt \notag\\
&\qquad\qquad  \leq 2 \int_0^r t P\left(t< \norm{\psi_{i,j}(L(t_{i+1})-L(t_i))}\right) \dt\notag\\
&\qquad \qquad \leq 2c_{1,\alpha} (t_{i+1}-t_i) \HSnorm{\psi_{i,j}}^\alpha \int_0^r t^{1-\alpha} \dt \notag\notag \\
&\qquad \qquad = \frac{2c_{1,\alpha}}{2-\alpha} (t_{i+1}-t_i) \HSnorm{\psi_{i,j}}^\alpha r^{2-\alpha}.
\end{align}
Combining \eqref{estimate_of_p_2}, \eqref{estimate_of_single_term} and \eqref{estimate_of_expectation} we get
$$p_2 
\! \leq \! \frac{2c_{1,\alpha}}{2-\alpha} \sum_{i=1}^{N-1} \sum_{j=1}^{m_i} (t_{i+1}-t_i) P(A_{i,j}) \HSnorm{\psi_{i,j}}^\alpha r^{-\alpha} 
= \frac{2c_{1,\alpha}}{2-\alpha} r^{-\alpha} \E \left[ \int_0^T \! \!\HSnorm{\Psi(s)}^\alpha \ud s \right]\!\!.$$
We have shown that 
\begin{equation}
\label{inequality_for_any_partition}
P\left( \max_{i=1,\ldots,n} \norm{I(t_i)} >r \right)
\leq c_{2,\alpha} r^{-\alpha} \E \left[ \int_0^T \HSnorm{\Psi(s)}^\alpha \ds \right]
\end{equation}
for any partition containing $\{s_1,\ldots, s_K\}$.

The process $\left( I(t) : t \in [0,T] \right)$ has c\`adl\`ag paths and thus for any sequence $(\zeta_n)$ of finite subsets of $[0,T]$, which increases to a countable and dense subset $\zeta\subset [0,T]$ we have 
$$\lim\limits_{n\to \infty} \sup\limits_{t\in \zeta_n} \norm{I(t)}
= \sup\limits_{t\in \zeta} \norm{I(t)}
= \sup\limits_{t\in [0,T]} \norm{I(t)}.$$
It follows that there exists a sequence of partitions $0=t_1^n<\ldots<t_{k_n}^n=T$ such that
$$P\bigg( \sup_{t\in [0,T]} \norm{I(t)} >r \bigg)
= \lim_{n\to \infty} P\left( \max_{i=1,\ldots,k_n} \norm{I(t_i^n)} >r \right).$$
This combined with \eqref{inequality_for_any_partition} finishes the proof for simple prcesses $\Psi$.

Now, consider the case of a general $\Psi$ in the space $\Lambda$ of admissible integrands. If the right-hand side of \eqref{integral_tail_estimate} is infinite, then the inequality holds trivially. Otherwise, it follows as in Proposition 4.22(ii) and Lemma 1.3 in \cite{Da_Prato_Zabczyk} that there exists a sequence of c\`agl\`ad simple processes $\Psi_n$, such that $d_\alpha(\Psi_n, \Psi) \to 0$ as $n\to \infty$, where
$$d_\alpha(\Psi, \Phi) := 
\begin{cases}
\E \left[ \int_0^T \HSnorm{\Psi(s)-\Phi(s)}^\alpha \ds \right], & \text{if } \alpha< 1, \\
\left(\E \left[ \int_0^T \HSnorm{\Psi(s)-\Phi(s)}^\alpha \ds \right]\right)^{1/\alpha}, & \text{if } \alpha\geq 1.
\end{cases}$$
The estimate \eqref{integral_tail_estimate} for simple processes implies that $\int \Psi_n(s)\dL(s)$ is a Cauchy sequence in the topology of uniform convergence in probability and thus converges in this topology to $\int \Psi(s)\dL(s)$.
We have by the Portmanteau theorem
\begin{align*}
P\left( \sup_{t\in [0,T]} \norm{\int_0^t \Psi(s) \dL(s)} > r \right)
&\leq \liminf_{n\to \infty} P\left( \sup_{t \in [0,T]} \norm{\int_0^t \Psi_n(s) \dL(s)} > r \right) \\
&\leq \liminf_{n\to \infty} c_{2,\alpha} r^{-\alpha} \E \left[ \int_0^T \HSnorm{\Psi_n(s)}^\alpha \ds \right] \\
&= c_{2,\alpha} r^{-\alpha} \E \left[ \int_0^T \HSnorm{\Psi(s)}^\alpha \ds \right].
\end{align*}
Moving $r^{-\alpha}$ to the left-hand side and taking supremum over $r>0$ we get the claim.
\end{proof}


We generalise the moment inequality for the stochastic integrals, which was proved for  square-integrable martingales in \cite[Th.\ 3.41, Th.\ 9.24]{Peszat_Zabczyk} and for vector-valued stable processes in \cite{Rosinski_Woyczynski_moment}.

\begin{Corollary}
\label{pro_moment_inequality_2}
Any stochastic process $\Psi$ in the space $\Lambda$ of admissible integrands satisfies
$$\E \left[ \sup_{t \in [0,T]} \norm{\int_0^t \Psi(s) \dL(s)}^p \right]
\leq C_{\alpha,p} \left( \E \left[ \int_0^T \HSnorm{\Psi(s)}^\alpha \ds \right] \right)^{p/\alpha}$$
for $p<\alpha$  where $ C_{\alpha,p}:=\tfrac{c_{2,\alpha}^{p/\alpha}\alpha}{\alpha-p}$.
\end{Corollary}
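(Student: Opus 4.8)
The plan is to deduce the moment bound directly from the tail estimate of Theorem~\ref{th_tail_estimate} by means of the layer-cake (distribution-function) representation of the $p$-th moment. Write $X := \sup_{t \in [0,T]} \norm{\int_0^t \Psi(s) \dL(s)}$ and $K := \E[\int_0^T \HSnorm{\Psi(s)}^\alpha \ds]$. First I would dispose of the trivial case $K=\infty$, in which the asserted inequality holds automatically, and thereafter assume $K<\infty$.

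The key input is that Theorem~\ref{th_tail_estimate} supplies $P(X>r)\leq c_{2,\alpha} K r^{-\alpha}$ for every $r>0$, while probabilities are trivially bounded by $1$; combining these gives $P(X>r)\leq\min\{1,\,c_{2,\alpha}Kr^{-\alpha}\}$. Starting from the identity $\E[X^p]=p\int_0^\infty r^{p-1}P(X>r)\dr$ (valid since $X\geq 0$), I would split the range of integration at the crossover point $r_0:=(c_{2,\alpha}K)^{1/\alpha}$, at which the two competing bounds coincide. On $(0,r_0)$ the estimate $P(X>r)\leq 1$ contributes $\int_0^{r_0} p\,r^{p-1}\dr = r_0^{\,p}=(c_{2,\alpha}K)^{p/\alpha}$, whereas on $(r_0,\infty)$ the tail bound contributes $p\,c_{2,\alpha}K\int_{r_0}^\infty r^{\,p-1-\alpha}\dr = \tfrac{p}{\alpha-p}(c_{2,\alpha}K)^{p/\alpha}$. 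Adding the two pieces yields $\tfrac{\alpha}{\alpha-p}(c_{2,\alpha}K)^{p/\alpha}$, which is exactly $C_{\alpha,p}K^{p/\alpha}$ with $C_{\alpha,p}=\tfrac{c_{2,\alpha}^{p/\alpha}\alpha}{\alpha-p}$, as claimed.

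The only genuine subtlety, and the point at which the hypothesis is decisive, is the convergence of the tail integral $\int_{r_0}^\infty r^{\,p-1-\alpha}\dr$: this requires the exponent $p-1-\alpha$ to be strictly less than $-1$, i.e.\ $p<\alpha$, which is precisely the standing assumption. This reflects the structural limitation already emphasised in item (b) of the introduction, namely that the integral operator maps $L^\alpha$-integrands only into $L^p$ for $p<\alpha$, so no bound at the critical exponent $p=\alpha$ can be extracted from this argument. Since everything beyond the choice of $r_0$ and this convergence check is a routine elementary computation, I expect no further obstacle.
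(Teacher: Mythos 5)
Your proposal is correct and follows essentially the same route as the paper: both apply the layer-cake formula $\E[X^p]=p\int_0^\infty r^{p-1}P(X>r)\,\mathrm{d}r$ to the combined bound $P(X>r)\leq 1\wedge\big(c_{2,\alpha}Kr^{-\alpha}\big)$ from Theorem \ref{th_tail_estimate}, and evaluating the resulting integral (which you do explicitly by splitting at the crossover point $r_0=(c_{2,\alpha}K)^{1/\alpha}$) yields the constant $\tfrac{\alpha}{\alpha-p}(c_{2,\alpha}K)^{p/\alpha}=C_{\alpha,p}K^{p/\alpha}$ exactly as in the paper. Your additional remarks — dispatching the trivial case $K=\infty$ and noting that convergence of the tail integral is precisely where $p<\alpha$ enters — are correct and consistent with the paper's argument.
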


\begin{proof}
Let $X:= \sup\limits_{t\in [0,T]} \norm{\displaystyle\int_0^t \Psi(s) \dL(s)}$ and  $\xi := c_{2,\alpha} \E \left[\displaystyle\int_0^T \HSnorm{\Psi(s)}^\alpha \ds\right]$. Theorem~\ref{th_tail_estimate} implies that for $r>0$ we have $P(X>r) \leq 1 \wedge (r^{-\alpha} \xi)$.
Therefore we obtain
\begin{align*}
\E \left[ \sup_{t \in [0,T]} \norm{\int_0^t \Psi(s) \dL(s)}^p \right]
&= p\int_0^\infty r^{p-1} P(X>r) \dr \\
&\leq p \int_0^\infty r^{p-1} \left( 1 \wedge (r^{-\alpha} \xi ) \right) \ud r\\
&= \left( 1 + \frac{p}{\alpha-p} \right) \xi^{p/\alpha} \\
&= \frac{c_{2,\alpha}^{p/\alpha} \alpha}{\alpha-p} \left( \E \left[ \int_0^T \HSnorm{\Psi(s)}^\alpha \ds \right] \right)^{p/\alpha}.
\qedhere
\end{align*}
\end{proof} 

\section{Auxiliary analytical lemmas}
\label{sec_auxiliary}

\subsection{Convergence of stochastic integrals}

We formulate the following convergence result for the stochastic integral in the form  needed later. 
Generalising this result to the usual convergence of stochastic integrals under the UT-condition, which does not have an obvious analogue in the cylindrical setting here, will be investigated in future work.

\begin{Lemma}
\label{lem_general_integrand}
Suppose that $L_k$ for $k \in \N$ and $L$ are canonical $\alpha$-stable processes for $\alpha\in (1,2)$  such that $L_k(t)u \to L(t)u$ in probability for all $u \in U$ and $t \in [0,T]$. 
If a sequence  $\big(\Psi_k)_{k\in\N}\subseteq  L^\alpha([0,T] \times \Omega;L_{\rm HS}(U,H))$ 
converges to some $ \Psi$ in $ L^\alpha([0,T] \times \Omega;L_{\rm HS}(U,H))$ then it follows that
\begin{equation*}
\int_0^t \Psi_k(s) \dL_k(s) \to \int_0^t \Psi(s) \dL(s)
\end{equation*}
in probability for all $t \in [0,T]$.
\end{Lemma}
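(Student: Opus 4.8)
The plan is to decompose the difference into an \emph{integrand error} and a \emph{noise error},
\[
\int_0^t \Psi_k(s)\dL_k(s) - \int_0^t \Psi(s)\dL(s)
= \int_0^t \big(\Psi_k(s)-\Psi(s)\big)\dL_k(s)
+ \left[\int_0^t \Psi(s)\dL_k(s) - \int_0^t \Psi(s)\dL(s)\right],
\]
and to prove that each of the two terms tends to $0$ in probability. The integrand error is the simpler term, and the decisive point is that the constant $C_{\alpha,p}$ in Corollary \ref{pro_moment_inequality_2} depends only on $\alpha$ and $p$, not on the particular $\alpha$-stable process. Since every $L_k$ is canonical $\alpha$-stable, the moment inequality applies to all $L_k$ with the \emph{same} constant, and for any fixed $p\in(1,\alpha)$ one obtains
\[
\E\!\left[\sup_{t\in[0,T]}\norm{\int_0^t \big(\Psi_k(s)-\Psi(s)\big)\dL_k(s)}^p\right]
\le C_{\alpha,p}\,\norm{\Psi_k-\Psi}_{L^\alpha([0,T]\times\Omega;L_{\rm HS}(U,H))}^p,
\]
which tends to $0$ by hypothesis; hence the integrand error converges to $0$ in $L^p$, and a fortiori in probability, uniformly in $k$.

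For the noise error the integrand is fixed while the driving process varies. First I would reduce to simple integrands: approximating $\Psi$ in $L^\alpha$ by simple processes $\Psi^{(m)}$ and applying the uniform moment inequality again, to both each $L_k$ and $L$, controls $\int_0^t(\Psi-\Psi^{(m)})\dL_k$ and $\int_0^t(\Psi-\Psi^{(m)})\dL$ uniformly in $k$. A three-$\epsilon$ argument then reduces the claim to showing, for each fixed simple $\Psi^{(m)}$, that $\int_0^t \Psi^{(m)}(s)\dL_k(s)\to\int_0^t \Psi^{(m)}(s)\dL(s)$ in probability. Such a simple integral is a finite sum of terms $\1_{A}\,\psi\big(L_k(t_{i+1})-L_k(t_i)\big)$, and because the Radonification $\psi(\cdot)$ is linear in its cylindrical argument, so that $\psi(L_k(t_{i+1})-L_k(t_i))=\psi(L_k(t_{i+1}))-\psi(L_k(t_i))$, and multiplication by an indicator preserves convergence in probability, everything reduces to the single convergence $\psi(L_k(t))\to\psi(L(t))$ in probability in $H$ for each fixed $\psi\in L_{\rm HS}(U,H)$ and $t\in[0,T]$.

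This last step is the main obstacle, since the hypothesis only furnishes convergence of the one-dimensional projections $L_k(t)u\to L(t)u$, whereas the Radonified variable couples infinitely many directions at once. I would resolve it by truncation combined with the uniform tail bound of Lemma \ref{lem_tail_estimate_radonified}. Writing $\psi=\sum_n\gamma_n(e_n\otimes f_n)$ and letting $P_N$ denote the orthogonal projection of $H$ onto $\Span(f_1,\dots,f_N)$, the finite-rank part $(P_N\psi)(L_k(t))=\sum_{n\le N}\gamma_n\,L_k(t)(e_n)\,f_n$ converges to $(P_N\psi)(L(t))$ in probability as $k\to\infty$, being a finite sum of scalar convergences. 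For the remainder $(\Id-P_N)\psi$, Lemma \ref{lem_tail_estimate_radonified} gives
\[
\sup_{r>0} r^\alpha\,P\big(\norm{\big((\Id-P_N)\psi\big)(L_k(t))}>r\big)
\le c_{1,\alpha}\,t\,\Big(\sum_{n>N}\gamma_n^2\Big)^{\alpha/2},
\]
with the same bound holding for $L$; the right-hand side is independent of $k$ and, since $\psi$ is Hilbert--Schmidt, tends to $0$ as $N\to\infty$. A final three-$\epsilon$ argument, choosing $N$ so that both remainders are uniformly small in probability and then letting $k\to\infty$ for the finite-rank part, yields $\psi(L_k(t))\to\psi(L(t))$, which completes the proof. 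The only recurring subtlety is to ensure that the moment and tail estimates are invoked with constants independent of $k$, which is exactly what their derivation from Lemma \ref{lem_tail_estimate_radonified} guarantees.
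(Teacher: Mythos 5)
Your proposal is correct, and its overall skeleton coincides with the paper's proof: the same splitting into an integrand error $\int_0^t(\Psi_k-\Psi)\,\dL_k$ and a noise error $\int_0^t\Psi\,\dL_k-\int_0^t\Psi\,\dL$, the same use of Corollary \ref{pro_moment_inequality_2} with the constant $C_{\alpha,p}$ independent of $k$ (valid because every $L_k$ is canonical $\alpha$-stable), and the same reduction of the noise error to simple integrands by an $L^\alpha$-approximation and a three-$\epsilon$ argument, ultimately boiling everything down to $\psi(L_k(t))\to\psi(L(t))$ in probability for fixed deterministic $\psi\in L_{\rm HS}(U,H)$. Where you genuinely diverge is in this last, crucial step. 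The paper handles it abstractly: since all $L_k$ share the same cylindrical distribution, the laws of $\psi(L_k(t))$ are identical and hence tight, and the convergence of the one-dimensional projections $\langle\psi(L_k(t)),h\rangle=L_k(t)(\psi^*h)$ then upgrades to convergence in probability in $H$ via Lemma 2.4 of Jakubowski's 1988 paper (tightness plus convergence of finite-dimensional projections). You instead argue by hand: truncate $\psi$ to its finite-rank part $P_N\psi$, whose Radonification $(P_N\psi)(L_k(t))=\sum_{n\le N}\gamma_n\,L_k(t)(e_n)\,f_n$ converges by the scalar hypothesis and linearity of cylindrical random variables, and control the tail of the remainder $\big((\Id-P_N)\psi\big)(L_k(t))$ uniformly in $k$ through Lemma \ref{lem_tail_estimate_radonified}, using that $\norm{(\Id-P_N)\psi}_{L_{\rm HS}(U,H)}\to 0$. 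This is a valid replacement: the linearity $\psi(L_k(t))=(P_N\psi)(L_k(t))+\big((\Id-P_N)\psi\big)(L_k(t))$ holds almost surely since both sides have the same pairings with a countable dense set of $h\in H$, and the tail constant $c_{1,\alpha}$ depends only on $\alpha$. Your route buys self-containedness — it needs no external tightness criterion, only the paper's own Lemma \ref{lem_tail_estimate_radonified} — at the cost of a slightly longer computation; the paper's route is shorter but outsources the key compactness step to a cited lemma. The only cosmetic blemish is the phrase that the integrand error converges ``uniformly in $k$,'' which should simply read ``as $k\to\infty$''; and, as in the paper itself, one implicitly assumes the $\Psi_k$ are admissible (adapted, c\`agl\`ad) so that the stochastic integrals and the moment inequality apply, which is not a defect relative to the paper's own argument.
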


\begin{proof}
Step 1. For a deterministic function  $\phi \in L_{\rm HS}(U,H)$ and $t\in [0,T]$, it follows that the set $\{P_{\phi(L_k(t))}:\, k\in\N\}$ of probability distributions $P_{\phi(L_k(t))}$ of the random variable $\phi(L_k(t))$ is tight, since each $L_k$ has the same cylindrical distribution. Since we have for each $h\in H$ that
$$\lim_{k\to\infty}\langle \phi(L_k(t)),h \rangle
=\lim_{k\to\infty} L_k(t)(\phi^* h)
= L(t)(\phi^*h)
= \langle \phi(L(t)),h \rangle \qquad\text{in }L^0(\Omega;\R),  $$
Lemma \cite[Lem.\ 2.4]{Jakubowski_1988} implies that 
 $\phi(L_k(t)) \to \phi(L(t))$ in $L^0(\Omega;H)$.

Step 2. We establish for each $\Psi\in  L^\alpha([0,T] \times \Omega;L_{\rm HS}(U,H))$ and $t\in [0,T]$ that
\begin{equation*}
\lim_{k\to\infty} \norm{\int_0^t \Psi(s) \dL_k(s) - \int_0^t \Psi(s) \dL(s)} = 0
 \qquad\text{in probability. }
\end{equation*}
For this purpose, suppose first that $\Phi\colon [0,T] \times \Omega \to L_{\rm HS}(U,H)$ is a simple integrand of the form
\begin{equation*}
\Phi(s) = \sum_{j=1}^{N-1} \Phi_j \1_{(t_j,t_{j+1}]}(s) \qquad \text{for }
\Phi_j = \sum_{l=1}^{m_j} \1_{A_{j,l}} \phi_{j,l}, 
\end{equation*}
where $\phi_{j,l}$ are deterministic functions in $L_{\rm HS}(U,H)$, the sets $A_{j,l}$ are in 
$\mathcal{F}_{t_j}$ and $0=t_1<\ldots< t_N=t$. Step 1 implies that 
\begin{align*}
\lim_{k\to\infty}\int_0^t \Phi(s) \dL_k(s) 
&= \lim_{k\to\infty} \sum_{j=1}^{N-1} \sum_{l=1}^{m_j} \1_{A_{j,l}} \phi_{j,l}(L_k(t_{j+1}) - L_k(t_j))\\
&= \sum_{j=1}^{N-1} \sum_{l=1}^{m_j} \1_{A_{j,l}} \phi_{j,l}(L(t_{j+1}) - L(t_j))
=\int_0^t \Phi(s) \dL(s)
\end{align*}
in probability.

For an arbitrary integrand $\Psi\in L^\alpha([0,T] \times \Omega;L_{\rm HS}(U,H))$  there exists a sequence $(\Phi_n)_{n\in\N}$ of simple integrands of the above form converging to $\Psi$ in $L^\alpha([0,T] \times \Omega; L_{\rm HS}(U,H))$. Thus, for each $\epsilon>0$  there exists $n_0\in\N$ such  that 
\begin{equation*}
c_{2,\alpha} \left(\tfrac{\epsilon}{3} \right)^{-\alpha} \E \left[ \int_0^T \norm{\Phi_n(s)-\Psi(s)}_{L_{\rm HS}(U,H)}^\alpha \ds \right] 
\leq \frac{\epsilon}{3} \qquad\text{for all }n\ge n_0.
\end{equation*}
Consequently, Theorem \ref{th_tail_estimate} implies  for each $n\ge n_0$ and for all $k \in \N$ that
\begin{align*}
P\left(\norm{\int_0^t \Psi(s) \dL_k(s) - \int_0^t \Phi_{n}(s) \dL_k(s)} \geq \frac{\epsilon}{3}\right) &\leq \frac{\epsilon}{3},\\
P\left(\norm{\int_0^t \Phi_{n}(s) \dL(s) - \int_0^t \Psi(s) \dL(s)} \geq \frac{\epsilon}{3}\right) &\leq \frac{\epsilon}{3}.
\end{align*}
As $\Phi_{n_0}$ is simple, the argument above guarantees that there exists $k_0=k_0(n_0)\in\N$ such that for all $k \geq k_0$ we have 
\begin{equation*}
P\left(\norm{\int_0^t \Phi_{n_0}(s) \dL_k(s) - \int_0^t \Phi_{n_0}(s) \dL(s)} \geq \frac{\epsilon}{3}\right) \leq \frac{\epsilon}{3}.
\end{equation*}
In summary, it follows for each $k \geq k_0$ that 
\begin{align*}
&P\left(\norm{\int_0^t \Psi(s) \dL_k(s) - \int_0^t \Psi(s) \dL(s)} \geq \epsilon\right) \\
&\qquad \leq P\left(\norm{\int_0^t \Psi(s) \dL_k(s) - \int_0^t \Phi_{n_0}(s) \dL_k(s)} \geq \frac{\epsilon}{3}\right) \\
&\qquad \qquad + P\left(\norm{\int_0^t \Phi_{n_0}(s) \dL_k(s) - \int_0^t \Phi_{n_0}(s) \dL(s)} \geq \frac{\epsilon}{3}\right) \\
&\qquad \qquad + P\left(\norm{\int_0^t \Phi_{n_0}(s) \dL(s) - \int_0^t \Psi(s) \dL(s)} \geq \frac{\epsilon}{3}\right) 
\leq \frac{\epsilon}{3} + \frac{\epsilon}{3} + \frac{\epsilon}{3} = \epsilon,
\end{align*}
which completes Step 2. 

Step 3. Let $\Psi_k$ be as in the statement of the lemma, that is converge to $\Psi$ in $L^\alpha([0,T]\times \Omega;L_{\rm HS}(U,H))$. We have
\begin{multline}
\label{by_triangle_inequality}
\norm{\int_0^t \Psi_k(s) \dL_k(s)- \int_0^t \Psi(s) \dL(s)} \\
\leq \norm{\int_0^t \Psi_k(s) \dL_k(s) - \int_0^t \Psi(s) \dL_k(s)}  + \norm{\int_0^t \Psi(s) \dL_k(s) - \int_0^t \Psi(s) \dL(s)}
\end{multline}
By applying Step 2 we conclude that the second term on the right-hand side converges to $0$ in probability.
Corollary \ref{pro_moment_inequality_2} implies for $p \in (1,\alpha)$ that 
\begin{align*}
\E \left[ \sup_{t \in [0,T]} \norm{\int_0^t \Psi_k(s) \dL_k(s) - \int_0^t \Psi(s) \dL_k(s)}^p \right]
&\leq C_{\alpha,p} \left( \E \left[ \int_0^T \norm{\Psi_k(s) - \Psi(s)}^\alpha \ds \right] \right)^{p/\alpha},
\end{align*}
which by our assumption establishes for each $t\in [0,T]$ that the first term on the right-hand side of \eqref{by_triangle_inequality} converges to $0$ in probability. This finished the proof.
\end{proof}

\subsection{Distribution of the stochastic integral}

In the next lemma, we show that the joint distribution of the integrand and integrator uniquely determines the distribution of the stochastic integral. A similar result for integrals with respect to Poisson random measures was derived in \cite{Brzezniak_Hausenblas_uniqueness}. Here, as the integrator is a process of classical random variables but the integrator of cylindrical random variables, we write explicitly  the condition on the joint distribution. We consider this condition in a more formal context in the following Remark \ref{re.classical-cylindrical}. We denote equality in distribution by $\stackrel{\mathcal D}{=}$ and we write $\Lambda \left( \Omega,\mathcal{F},P,(\mathcal{F}_t) \right)$ for the space of admissible integrands instead of $\Lambda$ if we want to stress the probability space on which the processes are defined.

\begin{Lemma}
\label{lem_unique_distribution_of_integral}
Let $\left(\Omega_i,\mathcal{F}_i,P_i,(\mathcal{F}_t^i)_{t\ge 0}\right)$ for $i=1,2$  be two filtered probability spaces,  $L_i$ two  cylindrical L\'evy processes with $L_i(t)\colon   U \to L^0\left(\Omega_i,\mathcal{F}_i,P_i\right)$ for $t\ge 0$, and  $\Psi_i \in \Lambda\left(\Omega_i,\mathcal{F}_i,P_i,(\mathcal{F}_t^i)\right)$. 
If the distributions of $(\Psi_1,L_1)$  and $(\Psi_2,L_2)$ satisfy
\begin{align}\label{eq.joint-distribution-equal}
&P\big(\Psi_1(s_1)\in B_1,\dots, \Psi_1(s_m)\in B_m,L_1(t_1)v_1\in C_1,\dots L_1(t_n)v_n\in C_n   \big) \notag \\
&\qquad = P\big(\Psi_2(s_1)\in B_1,\dots, \Psi_2(s_m)\in B_m,L_2(t_1)v_1\in C_1,\dots L_2(t_n)v_n\in C_n  \big)
\end{align}
for all $s_i,t_j\ge 0$, $B_i\in\Borel(L_{\rm HS}(U,H))$, $C_j\in\Borel(\R)$, $v_j\in U$ and $i=1,\dots, m$, $j=1,\dots , n$ then  
$$\int_0^T \Psi_1(s)\, \dL_1(s) \stackrel{\mathcal D}{=}  \int_0^T \Psi_2(s)\, \dL_2(s).$$
\end{Lemma}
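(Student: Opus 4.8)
The plan is to reduce to simple integrands, for which the integral is an explicit finite sum of Radonified increments, and then to pass to the limit using the continuity of the stochastic integral. The key point is that, for a simple integrand, the integral is one fixed measurable functional (the \emph{same} on both probability spaces) of finitely many values of the integrand and finitely many real projections of the driving noise, and these are exactly the quantities whose joint law is prescribed by \eqref{eq.joint-distribution-equal}. The approximating simple integrands for $\Psi_1$ and $\Psi_2$ must therefore be produced by one common deterministic discretisation functional, so that equality of the joint laws of $(\Psi_i,L_i)$ is inherited by the approximants.

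Concretely, I would fix an orthonormal basis $(f_m)$ of $H$ and a countable dense set $(\psi^{(r)})$ of $L_{\rm HS}(U,H)$, and for each $n$ choose a partition $0=t_0^n<\dots<t_{k_n}^n=T$ of mesh tending to $0$ together with a Borel map $q_n\colon L_{\rm HS}(U,H)\to\{\psi^{(1)},\dots,\psi^{(N_n)}\}$ satisfying $\HSnorm{q_n(\psi)-\psi}\to 0$ for every $\psi$. Applying the same functional to each integrand, set
\begin{equation*}
\Theta_n(\Psi)(s):=\sum_{j=0}^{k_n-1} q_n\big(\Psi(t_j^n)\big)\1_{(t_j^n,t_{j+1}^n]}(s)
=\sum_{j=0}^{k_n-1}\sum_{r=1}^{N_n}\1_{\{\Psi(t_j^n)\in q_n^{-1}(\psi^{(r)})\}}\,\psi^{(r)}\1_{(t_j^n,t_{j+1}^n]}(s),
\end{equation*}
which is an admissible simple integrand since $\Psi$ is adapted. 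Its integral equals $\sum_{j,r}\1_{\{\Psi(t_j^n)\in q_n^{-1}(\psi^{(r)})\}}\,\psi^{(r)}\big(L(t_{j+1}^n)-L(t_j^n)\big)$, whose coordinates are $\langle\psi^{(r)}(L(t_{j+1}^n)-L(t_j^n)),f_m\rangle=L(t_{j+1}^n)(v_{r,m})-L(t_j^n)(v_{r,m})$ with $v_{r,m}:=(\psi^{(r)})^*f_m\in U$.

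Since $H$ is separable, the $H$-valued random variable $\int_0^T\Theta_n(\Psi)\dL$ is a fixed Borel function of the countable family consisting of the evaluations $\Psi(t_j^n)\in L_{\rm HS}(U,H)$ and the reals $L(t_j^n)(v_{r,m})$, the reconstruction $(c_m)\mapsto\sum_m c_m f_m$ being continuous. Hypothesis \eqref{eq.joint-distribution-equal} says exactly that this family has the same law under $P_1$ and $P_2$ (all finite-dimensional distributions agree, hence the laws on the product space coincide), so $\int_0^T\Theta_n(\Psi_1)\dL_1\stackrel{\mathcal D}{=}\int_0^T\Theta_n(\Psi_2)\dL_2$ for every $n$. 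Letting $n\to\infty$, the left-continuity of the c\`agl\`ad paths together with $\HSnorm{q_n(\psi)-\psi}\to0$ gives $\Theta_n(\Psi_i)\to\Psi_i$, so by the continuity property of the integral recalled in Section~\ref{sec_preliminaries} (or by Theorem~\ref{th_tail_estimate} applied to the differences in the $L^\alpha$ setting) $\int_0^T\Theta_n(\Psi_i)\dL_i\to\int_0^T\Psi_i\dL_i$ in probability, hence in distribution; as equality in distribution is preserved under convergence in distribution, the two limits coincide in law.

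The main obstacle is the convergence $\int_0^T\Theta_n(\Psi_i)\dL_i\to\int_0^T\Psi_i\dL_i$: one must verify that the common discretisation functional $\Theta_n$ converges to the identity in a topology for which the integral is continuous, which requires the approximation of c\`agl\`ad adapted processes by simple ones (as in the proof of Theorem~\ref{th_tail_estimate}) and, in the $L^\alpha$ framework, a truncation argument ensuring the approximants are dominated. The conceptual heart, by contrast, is the bookkeeping of the preceding two paragraphs, showing that the Radonified integral of a simple integrand is a measurable functional of precisely the data constrained by \eqref{eq.joint-distribution-equal}, the only $U$-vectors entering being $v_{r,m}=(\psi^{(r)})^*f_m$.
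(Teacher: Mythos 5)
Your proposal is correct, and it shares the paper's overall strategy --- reduce to simple integrands by a discretisation that is \emph{one fixed measurable functional} applied on both probability spaces, so that hypothesis \eqref{eq.joint-distribution-equal} is inherited by the approximants, then pass to the limit using the continuity of the stochastic integral --- but your realisation of the reduction is genuinely different and arguably cleaner. The paper works with arbitrary simple integrands on the two spaces, and therefore must first argue that their operator values can be matched (it shows $m_1=m_2$ and $\{\psi_{1,1},\dots,\psi_{1,m_1}\}=\{\psi_{2,1},\dots,\psi_{2,m_2}\}$ after discarding null sets), then identifies the laws of the $H$-valued Radonified increments through their scalar projections $\langle\,\cdot\,,h\rangle$, handles arbitrary $\mathcal{F}_{t}$-measurable values via the discretised Radonification of \cite[Th.\ 4.2]{Jakubowski_Riedle}, and only then discretises the process in the Skorokhod metric via \cite[Prop.\ VI.6.37]{Jacod_Shiryaev}. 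Your quantisation map $q_n$ collapses these layers into one step: forcing the values into a fixed countable dense set $\{\psi^{(r)}\}$ makes the integral a single Borel functional of finitely many evaluations $\Psi(t_j^n)$ and countably many reals $L(t_j^n)v_{r,m}$, which is exactly the data constrained by \eqref{eq.joint-distribution-equal} (extended from finite to countable families by a standard $\pi$-system argument), so no value-matching and no separate treatment of measurable integrand values is needed; the paper's scalar-product step survives in your coordinate reconstruction, where $(c_m)\mapsto\sum_m c_m f_m$ is Borel on the set of convergence rather than continuous on all of $\R^\infty$, which is all you need since only laws are compared. Two cautions on your limiting step: the primary route should be the Skorokhod continuity of the integral recalled in Section~\ref{sec_preliminaries}, with pathwise Skorokhod convergence of the quantised discretisation following from \cite[Prop.\ VI.6.37]{Jacod_Shiryaev} because the quantisation error is uniformly small on the relatively compact range of each c\`agl\`ad path; the alternative you mention via Theorem~\ref{th_tail_estimate} in the $L^\alpha$ framework is not directly available, since the lemma assumes no integrability of $\Psi_i$, so the truncation you allude to would be mandatory there.
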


\begin{proof}
We start by showing that for $0\leq t_1 < t_2 \leq T$  the random variables $\Psi_1(t_1) \big( L_1(t_2)-L_1(t_1) \big)$ and $\Psi_2(t_1) \big( L_2(t_2)-L_2(t_1) \big)$ are equal in distribution.
First, assume that the integrands are of the form
\begin{equation*}
\Psi_1(t_1) = \sum_{k=1}^{m_1} \1_{A_{1,k}}\psi_{1,k}, \qquad
\Psi_2(t_1) = \sum_{k=1}^{m_2} \1_{A_{2,k}}\psi_{2,k},
\end{equation*}
for $A_{i,k}\in {\mathcal{F}_{t_1}^i}$,  $\psi_{i,k}\in L_{\rm HS}(U,H)$ and $m_1,m_2\in\N$. We may assume that for each fixed $i\in\{1,2\}$ the operators $\psi_{1,1},\ldots,\psi_{1,m_1}$ are pairwise distinctive and similarly for $\psi_{2,1},\ldots,\psi_{2,m_2}$.  
Since the distributions of the integrals $\int \Psi_i\, \dL_i$ are invariant under changes on 
null sets, we can assume that $P(A_{i,k})>0$ for all $k=1,\dots, m_i$ and $i=1,2$. Since $\Psi_1(t_1)$ and $\Psi_2(t_1)$ have the same distribution it follows that 
$m_1=m_2$ and $\{\psi_{1,1},\dots, \psi_{1,m_1}\}= \{\psi_{2,1},\dots, \psi_{2,m_2}\}$. 
Without loss of generality, we may assume that $\psi_{1,1} = \psi_{2,1},\dots, \psi_{1,m_1} = \psi_{2,m_2}$.
The assumption on the joint distribution  guarantees
\begin{multline*}
P \Big( \1_{A_{1,k}} \in B_k, L_1(t_1)(\psi_{1,k}^\ast h)\in C_k, L_1(t_2)(\psi_{1,k}^\ast h) \in D_k \text{ for } k=1,\ldots,m_1 \Big) \\
=P\Big( \1_{A_{2,k}} \in B_k, L_2(t_1)(\psi_{2,k}^\ast h)\in C_k, L_2(t_2)(\psi_{2,k}^\ast h)\in D_k \text{ for } k=1,\ldots,m_2 \Big),
\end{multline*}
for all $B_k, C_k,D_k\in\Borel(\R)$ and $h\in H$, which implies
\begin{align*}
\langle \Psi_1(t_1)&\big(L_1(t_2)-L_1(t_1)\big), h\rangle =
\sum_{k=1}^{m_1} \1_{A_{1,k}}\big(L_1(t_2)-L_1(t_1)\big)(\psi_{1,k}^\ast h)\\
&\stackrel{\mathcal D}{=}
\sum_{k=1}^{m_2} \1_{A_{2,k}}\big(L_2(t_2)-L_2(t_1)\big)(\psi_{2,k}^\ast h)
=\langle \Psi_2(t_1)\big(L_2(t_2)-L_2(t_1)\big), h\rangle .
\end{align*}
Since the distribution of the scalar products $\langle \Psi_i(t_1)\big(L_i(t_2)-L_i(t_1)\big), h\rangle $ for all $h\in H$ uniquely determines the distribution of the $H$-valued random variables $\Psi_i(t_1)\big(L_i(t_2)-L_i(t_1)\big)$ we obtain equality in distribution of the latter. If $\Psi_i(t_1)$ is an arbitrary ${\mathcal F}_{t_1}^i$-measurable random variable then the $H$-valued random variable $\Psi_i(t_1)\big(L_i(t_2)-L_i(t_1)\big)$ is defined as the limit in probability of
$\Psi_i^n(t_1)\big(L_i(t_2)-L_i(t_1)\big)$ as $n\to\infty$, where $\Psi_i^n(t_1)$ are discretised versions of $\Psi_i(t_1)$ of the above form; see \cite[Th.\ 4.2]{Jakubowski_Riedle}.
Since the discretised versions also satisfy  \eqref{eq.joint-distribution-equal},  we have equality in distribution of the random variables $\Psi_i(t_1)\big(L_i(t_2)-L_i(t_1)\big)$  for $i=1,2$. 

Repeating the above arguments, but simultanously at times $0\le t_1<\dots <t_n\le T$, establishes 
equality in distribution of the $H^n$-valued random variables 
$$
\Big( \Psi_i(t_1)\big(L_i(t_2)-L_i(t_1)\big),\dots 
 ,\Psi_i(t_{n-1})\big(L_i(t_n)-L_i(n_{n-1})\big)
\Big)
\qquad\text{for }i=1,2.
$$
This finishes the proof in the case when the integrands $\Psi_i$ are simple, i.e.\ of the form
\begin{align*}
\Psi_i(t)= \Psi_{i,0} \1_{\{0\}}(t) + \sum_{k=1}^{n-1} \Psi_{i,k} \1_{(t_k,t_{k+1}]}(t)
\qquad\text{for }t\in [0,T],
\end{align*}
where $\Psi_{i,k}\colon\Omega_i\to  L_{\rm HS}(U,H)$ are ${\mathcal F}_{t_k}^i$-measurable random variables for $k=1,\dots, n-1$. 

For arbitrary, adapted, c\`agl\`ad processes $\Psi_i\in \Lambda\left(\Omega_i,\mathcal{F}_i,P_i,(\mathcal{F}_t^i)\right)$ for $i=1,2$, let $\Psi_i^n$ be discretised versions of $\Psi_i$ such that $(\Psi_i^n)_{n\in\N}$ converges to $\Psi_i$ in probability in the Skorokhod metric; see \cite[Prop.\ VI.6.37]{Jacod_Shiryaev}  or  \cite[Lem.\ 5.2]{Jakubowski_Riedle}. Since $\Psi_i^n$ are discretised versions of $\Psi_i$ it follows that these also satisfy \eqref{eq.joint-distribution-equal}. Consequently, the above argument shows that 
$$ \int_0^T \Psi_n^1(s) \dL_1(s) \stackrel{\mathcal D}{=}  \int_0^T \Psi_n^2(s) \dL_2(s).$$
The assertion follows by taking $n\to \infty$ according to \cite[Th.\ 5.1]{Jakubowski_Riedle}.
\end{proof}

\begin{Remark}\label{re.classical-cylindrical}
The cylindrical algebra ${\mathcal Z}(L_{\rm HS}(U,H)\times U)$ on the product space $L_{\rm HS}(U,H)\times U$ consists of sets of the form
\begin{align*}
\big\{ (\psi,u)\in L_{\rm HS}(U,H)\times U:\
\scapro{\psi}{\phi_1}+\scapro{u}{g_1}\in B_1,\dots, \scapro{\psi}{\phi_n}+\scapro{u}{g_n}\in B_n
  \big\}
\end{align*}
for some $\phi_i\in L_{\rm HS}(U,H)$,  
$g_i\in U$, $B_i\in\Borel(\R)$, $i=1,\dots, n$ and $n\in\N$. It is shown in \cite[p.\ 182]{Schwartz-Radon} that sets 
of the form 
\begin{align*}
\big\{ (\psi,u)\in L_{\rm HS}(U,H)\times U:\
\scapro{\psi}{\phi_1}\in B_1,\dots, \scapro{\psi}{\phi_m}\in B_m, 
\scapro{u}{g_1}\in C_1,\dots, \scapro{u}{g_n}\in C_n, 
  \big\}
\end{align*}
for some $\phi_i\in L_{\rm HS}(U,H)$,  $g_j\in U$, $B_i, C_j\in\Borel(\R)$ and $i=1,\dots, m$ and $j=1,\dots, n$ are co-final for the  cylindrical algebra ${\mathcal Z}(L_{\rm HS}(U,H)\times U)$. Thus, Condition \eqref{eq.joint-distribution-equal} implies that the cylindrical probability distributions of $\big(\big(\Psi_i(s_1), L_i(t_1)\big) ,\dots,\big(\Psi_i(s_m), L_i(t_m)\big)\big)$ for $i=1,2$ coincide on ${\mathcal Z}(L_{\rm HS}(U,H)\times U)\times \ldots \times {\mathcal Z}(L_{\rm HS}(U,H)\times U)$, i.e.\ the finite-dimensional distributions of the (cylindrical) processes $(\Psi_i,L_i)$ coincide on  ${\mathcal Z}(L_{\rm HS}(U,H)\times U)$. 

If $L_i$ is a genuine L\'evy process on $U$ for $i=1,2$ and the finite dimensional distributions of $(\Psi_i, L_i)$ for $i=1,2$ coincide on ${\mathcal Z}(L_{\rm HS}(U,H)\times U)$ it follows that these also coincide on the Borel $\sigma$-algebra $\Borel(L_{\rm HS}(U,H)\times U)$ since the latter is generated by the cylindrical algebra. Consequently, in this case Condition \eqref{eq.joint-distribution-equal} requires that the finite-dimensional distributions of the processes $(\Psi_i,L_i)$ coincide on $\Borel(L_{\rm HS}(U,H)\times U)$.
The latter condition is known to be equivalent to the requirement that the distributions of the stochastic processes $(\Psi_i,L_i)$ for $i=1,2$ coincide on the Borel $\sigma$-algebra of the product space of Skorokhod spaces with  c\`agl\`ad and  c\`adl\`ag functions, i.e.\ one would write $(\Psi_1,L_1)\stackrel{\mathcal D}{=}(\Psi_2,L_2)$. 
\end{Remark}

\subsection{Tightness criterion in the Skorokhod space}

We say that a sequence of $H$-valued c\`adl\`ag processes $(X_n)$ satisfies the Aldous Condition if for any $\epsilon, \eta >0$ there exists $\delta>0$ such that for all sequences of stopping times $(\tau_n)$ such that $\tau_n+\delta \leq T$ one has
$$\sup_{n\in \N} \sup_{0<\theta\leq \delta} P(\norm{X_n(\tau_n+\theta)-X_n(\tau_n)} \geq \eta)\leq \epsilon.$$
We formulate and prove another version of the well-known result, which says that if a sequence of c\`adl\`ag processes $(X_n)$ satisfies the Aldous Condition and for every fixed $t$ it attains values in compact sets with arbitrarily large probability then it is tight in the space $D([0,T];H)$. 
Detailed exposition of this method is presented in \cite[Sec.\ 3.7 and 3.8]{Ethier_Kurtz}. 
Criteria specifically useful in the context of SPDEs are given in \cite{Motyl_2013}, which we build upon here.

\begin{Theorem}
\label{th_my_compact_containment_and_ldous}
Let $(X_n)$ be a sequence of $H$-valued c\`adl\`ag adapted processes. Assume that there exists a compactly embedded  subspace $\Gamma$ of $H$ with norm $\norm{\cdot}_\Gamma$ such that
\begin{equation}
\label{version_of_compact_containment}
\forall \epsilon>0 \, \exists R>0 \, \forall t \in [0,T] \cap \Q, \, n \in \N : P\big( X_n(t) \in \Gamma \text{ and } \norm{X_n(t)}_\Gamma \leq R \big) \geq 1-\epsilon.
\end{equation}
If $(X_n)$ satisfies the Aldous Condition then $(X_n)$ is tight in $D([0,T];H)$.
\end{Theorem}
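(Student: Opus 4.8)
The plan is to reduce the assertion to the classical tightness characterisation for the Skorokhod space recorded in \cite[Sec.\ 3.7 and 3.8]{Ethier_Kurtz} and adapted to the Hilbertian SPDE setting in \cite{Motyl_2013}: a sequence of $H$-valued c\`adl\`ag processes is tight in $D([0,T];H)$ as soon as (i) for every rational $t$ the laws of $X_n(t)$ form a tight family in $H$ (pointwise compact containment), and (ii) the oscillation of the paths is controlled, i.e.\ the c\`adl\`ag modulus of continuity $w'(X_n,\delta)$ satisfies $\lim_{\delta\to 0}\sup_{n}P\big(w'(X_n,\delta)\ge\eta\big)=0$ for every $\eta>0$. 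Condition (ii) is not assumed directly; rather we are given the Aldous Condition, so the substance of the argument is to verify (i) from \eqref{version_of_compact_containment} and to pass from the Aldous Condition to (ii).

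First I would establish (i). Fix $\epsilon>0$ and choose $R>0$ as in \eqref{version_of_compact_containment}. Since $\Gamma$ is compactly embedded in $H$, the $\Gamma$-ball $B_R:=\{x\in\Gamma:\norm{x}_\Gamma\le R\}$ is relatively compact in $H$; let $K_\epsilon$ be its closure in $H$, a compact subset of $H$. The event $\{X_n(t)\in\Gamma,\ \norm{X_n(t)}_\Gamma\le R\}$ is contained in $\{X_n(t)\in K_\epsilon\}$, so \eqref{version_of_compact_containment} gives $\inf_{n}P\big(X_n(t)\in K_\epsilon\big)\ge 1-\epsilon$ for every $t\in[0,T]\cap\Q$, with one compact set $K_\epsilon$ serving simultaneously for all such $t$ and all $n$. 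This is exactly the pointwise compact containment at rational times in the form required by \cite[Sec.\ 3.8]{Ethier_Kurtz}.

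The heart of the proof is the passage from the Aldous Condition, together with the compact containment just obtained, to the modulus condition (ii); here I would follow the argument of Aldous as presented in \cite[Sec.\ 3.8]{Ethier_Kurtz} and \cite{Motyl_2013}. Given $\eta,\epsilon>0$, the Aldous Condition furnishes $\delta>0$ controlling the probability that the increment of $X_n$ over any interval of length at most $\delta$ beginning at a stopping time exceeds $\eta$. Applying this at the successive stopping times $\tau_0=0$, $\tau_{j+1}=\inf\{t>\tau_j:\norm{X_n(t)-X_n(\tau_j)}\ge\eta\}$ at which the path moves by more than $\eta$, one shows that with probability at least $1-\epsilon$ these stopping times are $\delta$-separated; since the paths are c\`adl\`ag there are only finitely many of them in $[0,T]$, and $\delta$-separation bounds the modulus $w'(X_n,\delta)$ by a fixed multiple of $\eta$. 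Combined with the compact containment, which controls the spatial range of the values, this yields (ii). This stopping-time bookkeeping---in particular the reduction from control of a single increment to control of the full modulus $w'$ over the resulting random partition---is the step I expect to be the main obstacle.

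With (i) and (ii) in hand, the tightness criterion of \cite[Sec.\ 3.8]{Ethier_Kurtz} (see also \cite{Motyl_2013}) applies and shows that $(X_n)$ is tight in $D([0,T];H)$. The remaining points are routine: $H$ is separable, so $D([0,T];H)$ is Polish and the criterion is available; the endpoint $t=T$ requires no separate treatment since the criterion tests only rational times together with the modulus; and the fact that $K_\epsilon$ may be chosen independently of $t$ is stronger than what the criterion demands.
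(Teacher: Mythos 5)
Your proposal is correct and follows essentially the same route as the paper: both verify compact containment at rational times by noting that $\Gamma$-balls are relatively compact in $H$, convert the Aldous Condition into uniform control of the c\`adl\`ag modulus, and conclude via the Ethier--Kurtz/Motyl tightness criterion. The only packaging difference is that the paper constructs an explicit relatively compact set $A\cap B$ in $D([0,T];H)$ (using a union bound over the rational times with radii $R_k$ and probabilities $\epsilon/2^{k+1}$) and cites \cite[Lem.\ 7, 8 and Th.\ 1]{Motyl_2013} for precisely the Aldous-to-modulus step that you sketch directly via Aldous's stopping-time argument.
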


\begin{proof}
Fix $\epsilon>0$ and arrange $[0,T] \cap \Q$ in a sequence $(t_k)$. For every $k \in \N$ we choose $R_k>0$ such that for all $n \in \N$
$$P\left( X_n(t_k) \in \Gamma \text{ and } \norm{X_n(t_k)}_\Gamma \leq R_k \right) \geq 1 - \frac{\epsilon}{2^{k+1}}.$$
Let $B = \{ x \in D([0,T];H) : x(t_k) \in \Gamma \text{ and } \norm{x(t_k)}_\Gamma \leq R_k \text{ for all } k \in \N \}$.
Then
\begin{align}
\label{probability_of_B}
P( X_n \notin B) 
&= P \bigg( \bigcup_{k=1}^\infty \{ X_n(t_k) \in \Gamma \text{ and } \norm{X_n(t_k)}_\Gamma \leq R_k \}^c \bigg) \notag \\
&\leq \sum_{k=1}^\infty P(\{ X_n(t_k) \in \Gamma \text{ and } \norm{X_n(t_k)}_\Gamma \leq R_k \}^c) \notag \\
&\leq \sum_{k=1}^\infty \frac{\epsilon}{2^{k+1}} = \frac{\epsilon}{2}. 
\end{align}
By \cite[Lem.\ 7 and 8]{Motyl_2013} the Aldous Condition implies the existence of a measurable set $A \subset D([0,T];H)$ such that $P(X_n \in A) \geq 1-\tfrac{\epsilon}{2}$ and
\begin{equation}
\label{condition_ii_holds}
\lim_{\delta \to 0} \sup_{x \in A} \omega(x,\delta) = 0,
\end{equation}
where $\omega$ is the usual modulus of continuity in $D([0,T];H)$, see e.g.\ \cite[Ch.\ 3]{Billingsley}.
We show that the assumptions of Theorem 1 in \cite{Motyl_2013} are satisfied for the set $A\cap B$:
\begin{enumerate}
\item there exists a dense subset $\zeta\subset [0,T]$ such that for all $t \in \zeta$ the set $\{ x(t) : x \in A \cap B \}$ is relatively compact,
\item $\lim\limits_{\delta\to 0} \sup\limits_{x \in A\cap B} \omega(x, \delta)=0$.
\end{enumerate}
Note that Condition (i) holds with $\zeta=[0,T]\cap \Q$ because of the choice of $B$ and the fact that each closed ball $\{h\in \Gamma : \norm{h}_\Gamma \leq R_k\}$ is relatively compact in $H$ because of the assumed compact embedding of $\Gamma$. Condition (ii) is satisfied by \eqref{condition_ii_holds}. Thus, Theorem 1 in \cite{Motyl_2013}  implies that $A \cap B$ is relatively compact. Inequality \eqref{probability_of_B} implies 
$$P(X_n \in \overline{A \cap B}) \geq P(X_n \in A\cap B) \geq P(X_n \in A) - P(X_n \in B^c) \geq 1 - \frac{\epsilon}{2} - \frac{\epsilon}{2} = 1-\epsilon,$$ 
which proves tightness of $(X_n)$.
\end{proof}

\subsection{Continuity and measurability in the Skorokhod space}
\label{subsec_continuity_Skorokhod}

We prove a result concerning the composition of a strongly continuous semigroup and Hilbert-Schmidt operators. We consider continuity of the semigroup considered as a mapping on the Skorokhod space. 

\begin{Lemma}
\label{lem_uniform_continuity_compact_set}
Let $(S(t))_{t\ge 0}$ be a strongly continuous semigroup on $H$. 
If $K\subset L_{\rm HS}(U,H)$ is compact, then $\sup\limits_{\psi \in K} \HSnorm{(S(t)-\Id)\psi} \to 0$ as $t \to 0$.
\end{Lemma}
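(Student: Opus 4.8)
The plan is to reduce the uniform statement over the compact set $K$ to a convergence statement for finitely many fixed operators, which can then be handled by dominated convergence. The essential difficulty to keep in mind is that $S(t)\to\Id$ holds only strongly and not in operator norm (unless $A$ is bounded), so one cannot simply bound $\HSnorm{(S(t)-\Id)\psi}$ by $\norm{S(t)-\Id}_{L(H)}\HSnorm{\psi}$ and let the operator norm vanish; the strong continuity must instead be exploited entrywise in the Hilbert--Schmidt series.

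First I would record two elementary facts. Since $(S(t))$ is strongly continuous, the standard exponential bound (equivalently, the uniform boundedness principle applied on a compact time interval) furnishes a constant $M$ with $\norm{S(t)}_{L(H)}\leq M$ for all $t\in[0,1]$, whence $\norm{S(t)-\Id}_{L(H)}\leq M+1$ on $[0,1]$. Moreover, for any bounded operator $T$ on $H$ and any $\phi\in L_{\rm HS}(U,H)$ one has $\HSnorm{T\phi}\leq\norm{T}_{L(H)}\HSnorm{\phi}$, which follows from $\HSnorm{T\phi}^2=\sum_k\norm{T\phi e_k}^2\leq\norm{T}_{L(H)}^2\sum_k\norm{\phi e_k}^2$ for an orthonormal basis $(e_k)$ of $U$.

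Next I would prove the statement for a single fixed operator $\phi\in L_{\rm HS}(U,H)$, that is, $\HSnorm{(S(t)-\Id)\phi}\to 0$ as $t\to 0$. Writing $\HSnorm{(S(t)-\Id)\phi}^2=\sum_k\norm{(S(t)-\Id)\phi e_k}^2$, each summand tends to $0$ by strong continuity, while the domination $\norm{(S(t)-\Id)\phi e_k}^2\leq (M+1)^2\norm{\phi e_k}^2$ together with $\sum_k\norm{\phi e_k}^2=\HSnorm{\phi}^2<\infty$ supplies a summable majorant uniform in $t\in[0,1]$. Dominated convergence for series then gives the claim for a single operator.

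Finally I would upgrade to the uniform statement by a finite $\epsilon$-net. Given $\epsilon>0$, compactness of $K$ in $L_{\rm HS}(U,H)$ yields $\psi_1,\ldots,\psi_m\in K$ with $K\subset\bigcup_{i=1}^m\{\psi:\HSnorm{\psi-\psi_i}<\epsilon\}$. For arbitrary $\psi\in K$, choosing a nearby $\psi_i$ and splitting gives, for $t\in[0,1]$, the bound $\HSnorm{(S(t)-\Id)\psi}\leq\norm{S(t)-\Id}_{L(H)}\HSnorm{\psi-\psi_i}+\HSnorm{(S(t)-\Id)\psi_i}\leq (M+1)\epsilon+\HSnorm{(S(t)-\Id)\psi_i}$. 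By the single-operator step there is $\delta>0$ with $\HSnorm{(S(t)-\Id)\psi_i}<\epsilon$ for all $i=1,\ldots,m$ and all $t<\delta$, so that $\sup_{\psi\in K}\HSnorm{(S(t)-\Id)\psi}\leq (M+2)\epsilon$ for such $t$. Since $\epsilon>0$ was arbitrary, the conclusion follows. The one genuine obstacle is the lack of operator-norm continuity flagged at the outset, and the net argument circumvents it precisely by absorbing the non-vanishing factor $\norm{S(t)-\Id}_{L(H)}$ against the small net distance rather than against the operator $S(t)-\Id$ itself.
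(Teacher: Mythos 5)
Your proof is correct and follows essentially the same route as the paper: first the single-operator convergence $\HSnorm{(S(t)-\Id)\phi}\to 0$ via dominated convergence applied to the Hilbert--Schmidt series $\sum_k \norm{(S(t)-\Id)\phi e_k}^2$, then the upgrade to uniformity over $K$ by a finite $\epsilon$-net, absorbing the uniformly bounded factor $\norm{S(t)-\Id}_{L(H)}$ against the net distance. The paper's proof (which mirrors the argument of \cite[Th.\ 2.3.2]{Pazy}) differs only in bookkeeping of constants, so nothing further is needed.
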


\begin{proof}
We first show that if $\psi \in L_{\rm HS}(U,H)$, then $\norm{(\Id-S(t))\psi}_{L_{\rm HS}(U,H)}\to 0$ as $t\to 0$.
Let $(e_n)$ be an orthonormal basis of $U$.
We have
$$\HSnorm{(\Id-S(t))\psi}^2 
= \sum_{n=1}^\infty \norm{(\Id-S(t))\psi e_n}^2.$$
By the strong continuity of the semigroup each term in the sum converges to $0$.
Let $M:= \sup_{t \in [0,T]} \norm{S(t)}$. We have the bound $\norm{(\Id-S(t))\psi e_n}^2 \leq (1+M)^2 \norm{\psi e_n}^2$, which is summable because $\psi$ is Hilbert-Schmidt.
The application of the Lebesgue dominated convergence theorem proves the claim.

Now let $K\subset L_{\rm HS}(U,H)$ be compact and fix $\epsilon>0$. In the proof we use a method similar to the proof of \cite[Th.\ 2.3.2]{Pazy}.
Take $\epsilon_1 = \frac{\epsilon}{2(1+M)}$ and choose a covering of $K$ consisting of the balls $B(\psi_i,\epsilon_1)$ for $i=1,\ldots ,N$ with centres $\psi_i$ and radius $\epsilon_1$ for some $\psi_i \in L_{\rm HS}(U,H)$.
There exists $\delta$ such that for $s\leq \delta$ and $i=1,\ldots,N$ we have $\HSnorm{(\Id-S(s))\psi_i} \leq \frac{\epsilon}{2}$. 
For any $s\leq \delta$ and $\psi \in K$ we find the closest center $\psi_i$ and estimate
\begin{align*}
\HSnorm{(\Id-S(s))\psi}
&\leq \HSnorm{(\Id-S(s))\psi_i} + \HSnorm{(\Id-S(s))(\psi-\psi_i)} \\
&\leq \frac{\epsilon}{2} + (1+M)\epsilon_1 = \epsilon. \qedhere
\end{align*}
\end{proof}

\begin{Lemma}
\label{lem_continuity_in_Skorokhod_space}
Let $(S(t))_{t\ge 0}$ be a strongly continuous semigroup on $H$ and $G\colon H \to L_{\rm HS}(U,H)$ a continuous map. 
If there exists a constant $c_L>0$ such that 
$$\HSnorm{S(t)(G(x)-G(y))} \leq c_L \norm{x-y}
\qquad\text{for  all } t \in [0,T], x,y \in H, $$
then the mapping
\begin{equation*}
\Theta \colon D([0,T];H) \to D([0,T];L_{\rm HS}(U,H)), \qquad \Theta(x)(s):= S(T-s)G(x(s))
\end{equation*}
is continuous.
\end{Lemma}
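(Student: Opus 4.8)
The plan is to verify sequential continuity, which suffices because $D([0,T];L_{\rm HS}(U,H))$ is metrizable. So I would fix a sequence $x_n \to x$ in $D([0,T];H)$ and show $\Theta(x_n) \to \Theta(x)$. By the defining property of the Skorokhod ($J_1$) topology there are strictly increasing continuous bijections $\lambda_n$ of $[0,T]$ with $\eta_n := \sup_{s\in[0,T]} \abs{\lambda_n(s)-s} \to 0$ and $\rho_n := \sup_{s\in[0,T]} \norm{x_n(\lambda_n(s))-x(s)} \to 0$. Writing $y_n(s):=x_n(\lambda_n(s))$, it is enough to prove $\sup_{s\in[0,T]} \HSnorm{\Theta(x_n)(\lambda_n(s))-\Theta(x)(s)} \to 0$, since the same time changes $\lambda_n$ then witness $\Theta(x_n)\to\Theta(x)$.

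The next step is to split the difference, using the semigroup law and the triangle inequality,
\[ \HSnorm{\Theta(x_n)(\lambda_n(s))-\Theta(x)(s)} \leq \HSnorm{\big(S(T-\lambda_n(s))-S(T-s)\big)G(y_n(s))} + \HSnorm{S(T-s)\big(G(y_n(s))-G(x(s))\big)}. \]
The second term I would bound directly by the hypothesis as $c_L\norm{y_n(s)-x(s)} \leq c_L\rho_n$, uniformly in $s$, so it vanishes as $n\to\infty$. For the first term, setting $\mu := \min\{T-\lambda_n(s),\,T-s\}\geq 0$ and $\tau := \abs{\lambda_n(s)-s} \leq \eta_n$, I would write $S(T-\lambda_n(s))-S(T-s) = \pm\, S(\mu)\big(S(\tau)-\Id\big)$ and bound $\norm{S(\mu)}_{L(H)} \leq M := \sup_{t\in[0,T]}\norm{S(t)}<\infty$, reducing the first term to $M\sup_{0\le\tau\le\eta_n}\sup_{\psi\in K}\HSnorm{(S(\tau)-\Id)\psi}$, provided I produce a single compact set $K\subset L_{\rm HS}(U,H)$ containing every $G(y_n(s))$.

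The main obstacle is exactly the construction of this compact $K$, equivalently the relative compactness of $\{y_n(s):n\in\N,\,s\in[0,T]\}$ in $H$; the rest is routine. Here I would use that a càdlàg function has relatively compact range (via the standard decomposition of its domain into finitely many subintervals of arbitrarily small oscillation), so each image $\{x_n(u):u\in[0,T]\}$ and $\{x(s):s\in[0,T]\}$ is relatively compact in $H$; and that, $\lambda_n$ being onto, $\{y_n(s):s\in[0,T]\}=\{x_n(u):u\in[0,T]\}$. Combining with $\rho_n\to 0$: the compact set $K_0:=\overline{\{x(s):s\in[0,T]\}}$ is covered by finitely many $\epsilon$-balls, and for large $n$ every $y_n(s)$ lies within $\rho_n$ of $K_0$; together with the finitely many remaining images this shows $\{y_n(s):n,s\}$ is totally bounded, hence its closure $\bar A\subset H$ is compact. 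Continuity of $G$ then makes $K:=G(\bar A)$ compact, and $G(y_n(s))\in K$ for all $n$ and $s$.

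Finally I would apply Lemma \ref{lem_uniform_continuity_compact_set} to this compact $K$, which yields $\sup_{\psi\in K}\HSnorm{(S(\tau)-\Id)\psi}\to 0$ as $\tau\to 0^+$, and therefore $\sup_{0\le\tau\le\eta_n}\sup_{\psi\in K}\HSnorm{(S(\tau)-\Id)\psi}\to 0$ as $\eta_n\to 0$. Hence the first term vanishes uniformly in $s$ as well, giving $\sup_{s\in[0,T]}\HSnorm{\Theta(x_n)(\lambda_n(s))-\Theta(x)(s)}\to 0$ and thus the continuity of $\Theta$.
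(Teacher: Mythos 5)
Your proof is correct, and its skeleton coincides with the paper's: Skorokhod time changes, a triangle-inequality split into a Lipschitz term and a semigroup-increment term, the identity $S(T-\lambda_n(s))-S(T-s)=\pm\, S(\mu)\big(S(\tau)-\Id\big)$, and Lemma \ref{lem_uniform_continuity_compact_set} applied on a compact subset of $L_{\rm HS}(U,H)$. The one genuine difference is the order of the split. The paper decomposes the difference as $S(T-s)\big(G(x_n(s))-G(x(j_n(s)))\big)+\big(S(T-s)-S(T-j_n(s))\big)G(x(j_n(s)))$, so the semigroup increment only ever acts on values $G(x(\cdot))$ of the \emph{fixed limit path}, and the required compact set is just the closure of $\{G(x(s)) : s\in[0,T]\}$, obtained from relative compactness of the range of a single c\`adl\`ag function. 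You instead let the increment act on $G(x_n(\lambda_n(s)))$, which forces you to produce one compact set containing the $G$-images of \emph{all} approximating paths simultaneously; your total-boundedness argument (cover $\overline{\{x(s):s\in[0,T]\}}$ by finitely many balls, absorb $y_n(s)$ for large $n$ via $\rho_n\to 0$, and add finite nets for the finitely many remaining ranges, using completeness of $H$ to pass from totally bounded to compact) is a correct, if slightly heavier, way to supply it. Had you swapped the two terms, writing $\Theta(x_n)(\lambda_n(s))-\Theta(x)(s)=S(T-\lambda_n(s))\big(G(y_n(s))-G(x(s))\big)+\big(S(T-\lambda_n(s))-S(T-s)\big)G(x(s))$, the union-of-ranges compactness would have been unnecessary and your argument would reduce exactly to the paper's. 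Your preliminary reduction of $J_1$-convergence of $\Theta(x_n)$ to $\sup_{s}\HSnorm{\Theta(x_n)(\lambda_n(s))-\Theta(x)(s)}\to 0$, via the inverse time changes $\lambda_n^{-1}$, is also sound.
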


\begin{proof}
We use the notation $\norm{x}_\infty = \sup_{s \in [0,T]} \norm{x(s)}$ when $x \in D([0,T];H)$.
Recall that the Skorokhod topology on $D([0,T];H)$ is induced by the metric
$$d(x,y) = \inf_{j \in \Pi} \left( \norm{\iota-j}_\infty \vee \norm{x-y\circ j}_\infty \right),$$
where $\Pi$ is the set of all increasing bijections of $[0,T]$ and $\iota\colon [0,T] \to [0,T]$ is the identity mapping, see \cite[p.\ 124]{Billingsley}.
By the elementary inequality
$a \vee b \leq a+b \leq 2(a\vee b)$ for all $ a,b \geq 0,$
we see that the metric $d$ is equivalent to the following one:
$$d^+(x,y) := \inf_{j \in \Pi} \left( \norm{\iota-j}_\infty + \norm{x-y\circ j}_\infty  \right), \qquad x,y \in D([0,T];H).$$
Fix $x_n,x \in D([0,T];H)$ such that $(x_n)$ converge to $x$ and let $\epsilon>0$.
By \cite[Prob.\ 1, p.\ 146]{Dieudonne}, the image of $[0,T]$ by $x$ is relatively compact in $H$.
By the continuity of $G$, the set $K := \{G(x(s)) : s \in [0,T]\}$ is relatively compact in $L_{\rm HS}(U,H)$. 
Let $M:= \sup_{t \in [0,T]} \norm{S(t)}$.
By Lemma \ref{lem_uniform_continuity_compact_set} we obtain that for some $\delta>0$
\begin{equation}
\label{sup_sup}
\sup_{s\leq \delta} \sup_{\psi \in K} \HSnorm{(\Id-S(s))\psi} 
\leq \frac{\epsilon}{2M}.
\end{equation}
Without loss of generality we assume that $\delta \leq \frac{\epsilon}{c_L+1}$.
There exists $n_0$ such that for all $n\geq n_0$ we have $d^+(x_n,x) \leq \tfrac{\delta}{2}$.
By definition of the metric, for each $n\geq n_0$ there exists $j_n \in \Pi$ such that
$$\norm{\iota-j_n}_\infty  + \norm{x_n-x\circ j_n}_\infty 
\leq d^+(x_n,x) + \frac{\delta}{2}.$$
Thus for each $n\geq n_0$ there exists $j_n \in \Pi$ such that
\begin{equation}
\label{less_than_delta}
\norm{\iota-j_n} + \norm{x_n-x\circ j_n}_\infty \leq \delta.
\end{equation}
By the semigroup property and the assumed continuity we obtain 
\begin{align}
\label{Skorokhod_distance_estimate}
&d^+(\Theta(x_n),\Theta(x)) \notag \\
&\leq \norm{\iota-j_n}_\infty + \sup_{s \in [0,T]} \norm{S(T-s) G(x_n(s)) - S(T-j_n(s))G(x(j_n(s)))}  \notag \\
&\leq \norm{\iota-j_n}_\infty + \sup_{s \in [0,T]} \HSnorm{S(T-s)\left(G(x_n(s))-G(x(j_n(s)))\right)} \notag\\
&\quad + \sup_{s \in [0,T]} \HSnorm{\left(S(T-s)-S(T-j_n(s))\right) G(x(j_n(s)))}  \\
&\leq \norm{\iota-j_n}_\infty + c_L \sup_{s \in [0,T]} \norm{x_n(s)-x(j_n(s))}  \notag\\
&\quad \! + \!\! \sup_{s\in [0,T]} \HSnorm{S(T-(j_n(s)\vee s)) \big(S(j_n(s)\vee s - s) \! - \! S(j_n(s) \vee s -j_n(s))\big)G(x(j_n(s))}. \notag
\end{align}
By \eqref{less_than_delta} we have $\abs{s-(j_n(s)\vee s)}\leq \delta$ for all $n\geq n_0$ and $s\in [0,T]$. 
Note that
$$S((j_n(s)\vee s)-s)-S((j_n(s)\vee s)-j_n(s))=
\begin{cases}
\Id - S(s-j_n(s)), & \text{if } j_n(s) \leq s, \\
S(j_n(s)-s)-\Id, & \text{if } j_n(s)>s.
\end{cases}$$
Consequently inequality \eqref{sup_sup} implies that 
\begin{align*}
&\sup_{s\in [0,T]} \HSnorm{S(T-(j_n(s) \vee s))\big(S((j_n(s)\vee s)-s)-S((j_n(s)\vee s)-j_n(s))\big) G(x(j_n(s))}  \\
&\leq M \sup_{s\in [0,T]} \HSnorm{\big(S((j_n(s)\vee s)-s)-S(j(_n(s)\vee s)-j_n(s))\big) G(x(j_n(s))}  \\
&\leq M \sup_{s \leq \delta} \sup_{\psi \in K} \HSnorm{(\Id-S(s))\psi} \\
&\leq \frac{\epsilon}{2}.
\end{align*}
Applying this estimate together with \eqref{less_than_delta} in  \eqref{Skorokhod_distance_estimate} 
establishes for each  $n \geq n_0$ that
\begin{equation*}
d^+(\Theta(x_n),\Theta(x)) 
\leq \delta + c_L \delta+ \frac{\epsilon}{2}
= (c_L+1)\delta + \frac{\epsilon}{2}
\leq \frac{\epsilon}{2} + \frac{\epsilon}{2}
=\epsilon. \qedhere
\end{equation*}
\end{proof}

We finish this subsection by establishing that every cylindrical L\'evy process can be viewed as a random variable taking values in a Polish space.
First, we recall some facts about measurability of evaluations with respect to the Skorokhod topology, see \cite[Lem.\ VI.3.12, Lem.\ VI.3.14]{Jacod_Shiryaev}.
For a c\`adl\`ag process $X$ let $\mathcal{J}(X) := \{ t \geq 0 : P(\Delta X(t) \neq 0)>0 \}$. The set $\mathcal{J}(X)$ is at most countable.
Recall that for each $t \in [0,T]$ the function $\pi_t \colon  D([0,T];H) \to H$ defined by $\pi_t(x)=x(t)$ is measurable, see \cite[Th.\ 12.5(ii)]{Billingsley}.
Thus if $X$ and $Y$ are two $H$-valued processes with the same distribution in $D([0,T];H)$, then $X(t)$ and $Y(t)$ have the same distribution for any $t \in [0,T]$.


%
%

Let $(u_n)$ be a countable dense sequence in $U$. 
Let $\R^\infty = \prod\limits_{n=1}^\infty \R$.
With each cylindrical L\'evy process $L=(L(t) : t \geq 0)$ we associate a mapping
\begin{equation}
\label{definition_A_L}
A_L\colon \Omega \to D([0,T];\R^\infty), \qquad A_L(\omega)(t) := \left( (L(t)u_n)(\omega) \right)_{n \in \N}.
\end{equation}
Vice versa, the mapping $A_L$ determines the cylindrical L\'evy process:
\begin{Lemma}
\label{lem_metric_space}
The space $D([0,T];\R^\infty)$ is a separable and complete metric space. The function $A_L$ is measurable.
Moreover, $A_L$ uniquely determines the process $L$.
\end{Lemma}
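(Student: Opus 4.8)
The statement has three parts. First, I would establish that $D([0,T];\R^\infty)$ is a separable complete metric space (Polish). Second, I would show measurability of the map $A_L$. Third, I would argue that $A_L$ uniquely determines $L$. The plan is to treat these in order, since the first part is a general topological fact, the second is a routine measurability argument, and the third encodes the defining property of a cylindrical L\'evy process.

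For the first part, the key observation is that $\R^\infty = \prod_{n=1}^\infty \R$ is a separable, completely metrizable topological vector space when equipped with the product topology; a standard metric is $\rho(x,y) = \sum_{n=1}^\infty 2^{-n} \big( \abs{x_n - y_n} \wedge 1 \big)$. I would then invoke the general theory of Skorokhod spaces: by \cite[Ch.\ 3]{Billingsley} or \cite[Ch.\ VI]{Jacod_Shiryaev}, for any Polish space $E$ the space $D([0,T];E)$ is itself Polish. Since $\R^\infty$ is Polish, $D([0,T];\R^\infty)$ is a separable complete metric space, which is the first claim.

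For the second part, I would use that the Skorokhod $\sigma$-algebra on $D([0,T];\R^\infty)$ is generated by the evaluation maps $\pi_t$, as recalled in the excerpt via \cite[Th.\ 12.5(ii)]{Billingsley}. Composing, it suffices to check that for each fixed $t$ the map $\omega \mapsto A_L(\omega)(t) = \big( (L(t)u_n)(\omega) \big)_{n}$ is measurable into $\R^\infty$, and by the universal property of the product topology this reduces to measurability of each coordinate $\omega \mapsto (L(t)u_n)(\omega)$. This holds because $L(t)u_n$ is by definition an element of $L^0(\Omega,\mathcal{F},P)$, i.e.\ a (real-valued) random variable; the c\`adl\`ag property of $A_L(\omega)$ follows from the fact that each projection $(L(t)u_n : t\ge 0)$ is a genuine L\'evy process in $\R$ and hence has a c\`adl\`ag modification, and I would fix such a joint modification so that $A_L(\omega) \in D([0,T];\R^\infty)$.

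For the third part, the claim is that the law of $A_L$ in $D([0,T];\R^\infty)$ determines the cylindrical L\'evy process $L$, i.e.\ the finite-dimensional cylindrical distributions $P\big((L(t_1)v_1,\dots,L(t_k)v_k)\in B\big)$. I would first recover, from the distribution of $A_L$, the joint law of $\big((L(t_1)u_{n_1},\dots): \text{finitely many } t,n\big)$ using that evaluations $\pi_t$ are measurable and that finite coordinate projections of $\R^\infty$ are continuous; this yields the joint distribution of $\big(L(t_j)u_{n}\big)$ for all $t_j$ and all basis-like points $u_n$ from the dense sequence. The remaining step is to pass from the dense sequence $(u_n)$ to arbitrary $v\in U$: for fixed $t$, since $L(t)\colon U \to L^0(\Omega)$ is continuous and $(u_n)$ is dense in $U$, for any $v\in U$ we have $L(t)u_{n_k} \to L(t)v$ in probability along a subsequence $u_{n_k}\to v$, so the law of $L(t)v$ (and jointly over finitely many times and vectors) is the limit of laws already determined by $A_L$. \textbf{The main obstacle} is precisely this last approximation: one must verify that convergence in probability of the scalar evaluations, which follows from continuity of the cylindrical random variable $L(t)$, upgrades to convergence of the relevant \emph{joint} finite-dimensional distributions, so that the full cylindrical characteristic function $\phi_{(L(t_1),\dots,L(t_k))}$ is determined. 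Since convergence in probability implies convergence in distribution and the finite-dimensional joint evaluations converge jointly, the joint characteristic functions converge pointwise, and continuity of $L(t)$ guarantees the limiting object is the genuine cylindrical distribution of $L$; this pins down $L$ uniquely and completes the proof.
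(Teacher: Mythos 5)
Parts one and two of your proposal are correct and essentially match the paper's proof, modulo citations: the paper gets Polishness of $\R^\infty$ from Engelking and the $D$-space facts (Polishness of $D([0,T];\R^\infty)$ and the reduction of measurability of $A_L$ to measurability of the evaluations $\omega \mapsto (L(t)u_n)_{n\in\N}$) from Jakubowski's 1986 paper rather than from Billingsley or Jacod--Shiryaev, but the substance is identical. Your explicit remark that one must fix a joint c\`adl\`ag modification of the countably many real-valued L\'evy processes $(L(t)u_n)_{t\geq 0}$ so that $A_L(\omega)$ actually lands in $D([0,T];\R^\infty)$ is a point the paper passes over silently, and it is handled correctly (c\`adl\`ag in the product topology is equivalent to coordinatewise c\`adl\`ag, and countably many a.s.\ modifications can be chosen simultaneously).

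In the third part, however, you prove a strictly weaker statement than the one claimed. You read ``$A_L$ uniquely determines the process $L$'' as ``the \emph{law} of $A_L$ determines the finite-dimensional cylindrical distributions of $L$,'' and then run a distributional argument through joint characteristic functions. The paper means, and proves, determination at the level of random variables: from the map $A_L$ itself one recovers $L(t)u_n$ as the coordinates of $A_L(t)$, and for arbitrary $u \in U$ one has $L(t)u = \lim_{k\to\infty} A_L(t)(u_{n_k})$ in probability for any subsequence $u_{n_k} \to u$, by continuity of $L(t)\colon U \to L^0(\Omega,\mathcal{F},P)$. This is a one-line reconstruction, with no need for the joint-convergence upgrade you flag as the main obstacle. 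The distinction matters: law-determination does not determine a process (two independent copies of $L$ share all cylindrical distributions), and the random-variable-level reconstruction is precisely what step (2) of the proof of Theorem \ref{th_existence_weak_sol_main_result} uses to re-build the cylindrical noises $\bar{L}_k$ and $\bar{L}$ from the $D([0,T];\R^\infty)$-valued variables produced by Skorokhod's theorem. The repair is immediate from your own key step: apply the density-plus-continuity approximation to the random variables rather than to their laws, noting that the in-probability limit $L(t)u$ is a function of the coordinates $A_L(t)(u_{n_k})$ alone; the characteristic-function detour can then be deleted entirely.
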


\begin{proof}
For the first statement see \cite[Cor.\ 2.3.16, Th.\ 4.2.2, Th.\ 4.3.12]{Engelking} and \cite[Lem.\ 1.6(iii)]{Jakubowski_1986}.
We prove that $A_L$ is measurable. 
For every $t\in [0,T]$ the mapping $\omega \mapsto \left(L(t)u_n\right)_{n \in \N}$ is measurable. 
By \cite[Cor.\ 2.4]{Jakubowski_1986} this implies measurability of $A_L$.

Note that $A_L$ uniquely determines $L$ as for $u \in U \setminus \{u_n: n \in \N \}$ we find a subsequence $(u_{n_k})_{k\in\N}$ converging to $u$ and then by the continuity of $L(t)$ we have that $L(t)u =  \lim\limits_{k\to \infty} A_L(t)(u_{n_k})$ , where the limit is taken in probability. 
\end{proof}

\section{Stochastic evolution equation}
\label{sec_SPDE}

From now on we assume $\alpha \in (1,2)$. We consider the problem of existence and uniqueness of solutions for the evolution equation
\begin{equation}
\label{SPDE_stable}
\dX(t) = \left( AX(t) + F(X(t))\right) \dt + G(X(t-)) \dL(t)
\end{equation}
with an $\mathcal{F}_0$-measurable initial condition $X(0)=X_0$. Here, $A$ is a generator of a strongly continuous semigroup $S$ on the Hilbert space $H$, and $F\colon H \to H$ and $G\colon H \to L_{\rm HS}(U,H)$ are some mappings. The driving noise $L$ is assumed to be a canonical $\alpha$-stable cylindrical L\'evy process on the Hilbert space $U$ for $\alpha \in (1,2)$.

\begin{Definition}
A filtered probability space $(\Omega,\mathcal{F},P,(\mathcal{F}_t))$, a cylindrical L\'evy process $L$ and an $H$-valued, c\`adl\`ag process $X$ is a \emph{weak mild solution} to \eqref{SPDE_stable} if
\begin{equation}
\label{SPDE_mild}
X(t) = S(t)X_0 + \int_0^t S(t-s)F(X(s)) \ds + \int_0^t S(t-s) G(X(s-)) \dL(s)
\end{equation}
holds $P$-a.s.\ for all $t \in [0,T]$. 
If such a c\`adl\`ag process $X$ exists for any given probability basis and any given $\alpha$-stable cylindrical L\'evy process $L$, we say that there is a \emph{strong mild solution}.
We say that \emph{pathwise uniqueness} holds if for any two c\`adl\`ag processes $X_1$ and $X_2$ defined on the same probability space and satisfying \eqref{SPDE_mild} we have
$$P(X_1(t) = X_2(t) \text{ for all } t \in [0,T]) = 1.$$
\end{Definition}

We list all the assumptions which will guarantee the existence and uniqueness of solutions:
\begin{enumerate}[label=(A\arabic*)]
\item \label{it_contractions} The semigroup $S$ satisfies the following additional conditions:
\begin{enumerate}
\item[(i)] $S$ is a semigroup of contractions, i.e.\ $\norm{S(t)} \leq 1$ for all $t\geq 0$,
\item[(ii)] $0$ is in the resolvent set $\rho(A)$,
\item[(iii)] $\{ \lambda \in \C : 0<\omega<\abs{\arg \lambda} \leq \pi\} \subset \rho(-A)$ for some $\omega<\frac{\pi}{2}$.
\item[(iv)] The embedding $D(A) \subset H$ is compact.
\end{enumerate}
\item \label{it_boundedness_fractional} There is $\delta>0$ and $M_0\geq 0$ such that for all $t \in (0,T]$ and $x \in H$:
\begin{align*}
\norm{S(t)F(x)}_{D((-A)^\delta)} &\leq M_0, \\
\norm{S(t)G(x)}_{L_{\rm HS}(U,D((-A)^\delta))} &\leq M_0.
\end{align*}
\item \label{it_Lipschitz} There exist constants $C_F$ and $C_G$ such that for all $x,y \in H$ and $t\in [0,T]$:
\begin{align*}
\norm{S(t)(F(x)-F(y))} &\leq C_F \norm{x-y},\\
\HSnorm{S(t)(G(x)-G(y))} &\leq C_G \norm{x-y}.
\end{align*}
\end{enumerate}

\begin{Theorem}
\label{th_existence_weak_sol_main_result}
Under assumptions {\upshape{\ref{it_contractions}}--\upshape{\ref{it_Lipschitz}}} equation \eqref{SPDE_mild} has a unique mild solution.
\end{Theorem}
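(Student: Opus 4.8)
The plan follows the programme sketched in the introduction, the guiding constraint being that Corollary~\ref{pro_moment_inequality_2} controls the stochastic convolution only in $L^p$ for exponents $p\in(1,\alpha)$, so that no fixed-point/contraction argument is available. I fix such a $p$ and work first on a subinterval $[0,T_0]$ whose length is chosen in the uniqueness step; the solution on $[0,T]$ is then obtained at the end by gluing. I would begin with \emph{pathwise uniqueness}. For two mild solutions $X_1,X_2$ the difference $Z:=X_1-X_2$ has zero initial value and solves \eqref{SPDE_mild} with $F,G$ replaced by the increments $F(X_1)-F(X_2)$ and $G(X_1)-G(X_2)$. Applying Corollary~\ref{pro_moment_inequality_2} to the stochastic term and the Lipschitz bounds \ref{it_Lipschitz} to both terms, and using the uniform estimate \ref{it_boundedness_fractional} to trade the $L^\alpha$-integral of the integrand for an $L^p$-integral, yields for $h(t):=\E\big[\sup_{s\le t}\norm{Z(s)}^p\big]$ a nonlinear integral inequality in which the diffusion part appears with the \emph{sublinear} exponent $p/\alpha<1$. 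Because $p<\alpha$, the classical Gronwall lemma does not apply; here I would invoke the Willett--Wong \cite{Willett_Wong} generalisation of Gronwall's inequality to close the estimate and conclude $X_1=X_2$ on $[0,T_0]$.

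For \emph{existence} I would run the Picard iteration $X^{(0)}(t):=S(t)X_0$ and
\[
X^{(n+1)}(t)=S(t)X_0+\int_0^t S(t-s)F(X^{(n)}(s))\ds+\int_0^t S(t-s)G(X^{(n)}(s-))\dL(s),
\]
and prove that the laws of $(X^{(n)})$ are tight in $D([0,T_0];H)$ by verifying the hypotheses of Theorem~\ref{th_my_compact_containment_and_ldous}. Assumption \ref{it_boundedness_fractional} bounds $S(t)F(x)$ and $S(t)G(x)$ uniformly in the fractional-power space, and since the compact embedding $D(A)\subset H$ of \ref{it_contractions} propagates to the compact embedding $D((-A)^\delta)\hookrightarrow H$, the space $\Gamma:=D((-A)^\delta)$ serves as the compactly embedded subspace. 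To bound the stochastic convolution in $\norm{\cdot}_\Gamma$ uniformly in $n$ and to control its increments, I would use the factorisation identity
\[
\int_0^t S(t-s)G(X^{(n)}(s-))\dL(s)=\frac{\sin(\pi\beta)}{\pi}\int_0^t (t-s)^{\beta-1}S(t-s)Y_n(s)\ds,
\]
with $Y_n(s):=\int_0^s (s-r)^{-\beta}S(s-r)G(X^{(n)}(r-))\dL(r)$ for a suitable $\beta\in(0,1)$, estimate $Y_n$ in $L^p(\Gamma)$ through Corollary~\ref{pro_moment_inequality_2} applied in $\Gamma$, and handle the deterministic fractional kernel by H\"older's inequality; the drift is treated analogously and more easily via \ref{it_boundedness_fractional}. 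This gives the compact containment condition \eqref{version_of_compact_containment} and, through the increment estimates, the Aldous condition, so Theorem~\ref{th_my_compact_containment_and_ldous} yields tightness. I expect this fractional-calculus step — extracting uniform bounds and small increments from only $L^p$ ($p<\alpha$) moment estimates, in one piece and without a L\'evy--It{\^o} decomposition — to be the main obstacle.

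Next I would pass to a subsequential limit. Encoding the cylindrical noise $L$ as the random variable $A_L$ in the Polish space $D([0,T_0];\R^\infty)$ of Lemma~\ref{lem_metric_space}, I obtain tightness of the laws of $(X^{(n+1)},X^{(n)},A_L)$ on a product of Skorokhod spaces, apply Skorokhod's representation theorem on a new probability space, and rebuild the cylindrical noise from the limit of the encodings using that $A_L$ determines $L$. Passing to the limit in the Picard recursion then identifies the limit as a weak mild solution: the drift converges by the continuity of $F$ and dominated convergence (the integrands being uniformly bounded by \ref{it_boundedness_fractional}), while the stochastic convolution is handled by combining the continuity of the map $\Theta$ from Lemma~\ref{lem_continuity_in_Skorokhod_space} with the joint convergence result for stochastic integrals in Lemma~\ref{lem_general_integrand}. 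A careful analysis of the Picard increments is required here precisely because the iterates need not converge in any $L^p$-sense.

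Finally I would upgrade weak to strong existence and globalise. The pathwise uniqueness established above together with the weak existence just obtained allows me to apply the abstract Yamada--Watanabe theorem of Kurtz \cite{Kurtz_Yamada}, reading \eqref{SPDE_mild} as a measurable constraint on an input--output space with inputs $(X_0,A_L)$ and output $X$; this produces a strong mild solution on $[0,T_0]$ that is pathwise unique. Since the length $T_0$ depends only on the structural constants in \ref{it_contractions}--\ref{it_Lipschitz} and not on the initial datum, I would then glue the solutions constructed on $[0,T_0],[T_0,2T_0],\dots$, taking the terminal value on each subinterval as the initial datum for the next, to obtain a unique mild solution on all of $[0,T]$.
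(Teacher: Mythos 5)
Your overall architecture --- Picard iteration, tightness via Theorem \ref{th_my_compact_containment_and_ldous} with $\Gamma=D((-A)^\delta)$, Skorokhod representation with the noise encoded by $A_L$, identification of the limit, Kurtz's Yamada--Watanabe theorem and gluing --- coincides with the paper's, but two of your central steps would fail as stated. First, the factorisation identity is unusable in this setting: estimating $Y_n(s)=\int_0^s (s-r)^{-\beta}S(s-r)G(X^{(n)}(r-))\dL(r)$ by Corollary \ref{pro_moment_inequality_2} forces $\beta\alpha<1$, while treating the deterministic kernel $(t-s)^{\beta-1}$ by H\"older against an $L^p$-bound on $Y_n$ forces $\beta>1/p$; since $p<\alpha$, the interval $(1/p,1/\alpha)$ is empty and no admissible $\beta$ exists. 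This is precisely the obstruction the paper circumvents: assumption \ref{it_boundedness_fractional} gives uniform bounds directly in $D((-A)^\delta)$, so compact containment \eqref{version_of_compact_containment} follows from Theorem \ref{th_tail_estimate} applied to $L_{\rm HS}(U,D((-A)^\delta))$-valued integrands, and the Aldous condition from splitting the increment with $(S(\theta)-\Id)$ and the estimate \eqref{norm_continuity_of_semigroup}; no factorisation is needed or possible. Relatedly, you take tightness of the triple $(X^{(n+1)},X^{(n)},A_L)$ for granted; componentwise tightness does not imply joint tightness in a Skorokhod space, and the paper needs the separate Proposition \ref{pro_tightness_for_joint}, as well as Lemma \ref{lem_unique_distribution_of_integral} to transfer the Picard recursion to the new probability space --- a step absent from your outline.

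Second, your uniqueness argument stalls at fixed $p$. The inequality you derive, $h(t)\le C\left(\int_0^t h(s)\ds\right)^{p/\alpha}$, is sublinear and admits a nonzero maximal solution of order $t^{\alpha/(\alpha-p)}$; Theorem \ref{th_Willet_Wong} yields exactly this bound, roughly $h(t)^{\alpha/p}\le (\alpha-p)^{-2}\left(\alpha c_{2,\alpha}(c_3^\alpha\vee c_3)\,t\right)^{\alpha/(\alpha-p)}$, and not $h\equiv 0$, so Willett--Wong alone does not ``close the estimate''. The missing idea in the paper's Proposition \ref{pro_pathwise_uniqueness} is the limit $p\nearrow\alpha$: after choosing $T$ so small that $\alpha c_{2,\alpha}(c_3^\alpha\vee c_3)T<1$, and arranging (via the H\"older form \ref{it_Holder} with exponent $q=p/\alpha$) that the constants stay bounded as $p\nearrow\alpha$, the right-hand side tends to $0$, whence $X_1(t)=X_2(t)$ a.s.; fixing $p$ at the outset, as you do, makes the conclusion unreachable. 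The same $p\nearrow\alpha$ device, with careful control of the accumulated constants $C_{\alpha,p}^{1+p/\alpha+\cdots}$, is what makes the consecutive-increment estimate work (Lemma \ref{lem_difference_converge_in_probability}); this lemma is indispensable for identifying $\bar X=\bar Y$ after Skorokhod's theorem --- without it the limit equation only reads $\bar X(t)=S(t)\bar X(0)+\int_0^t S(t-s)F(\bar Y(s))\ds+\int_0^t S(t-s)G(\bar Y(s-))\,\ud \bar L(s)$ with possibly $\bar Y\neq\bar X$ --- and your proposal merely gestures at it (``careful analysis of the Picard increments'') without supplying the mechanism.
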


The reminder of the paper is devoted to the proof of this result. After numerous preparatory lemmas in Subsections \ref{subsec_pathwise_uniqueness}-\ref{subsec_estimating_the_norm_of_difference} we complete the proof of existence and uniqueness in Subsection \ref{subsec_proof_main_result}. The outline is as follows.
We first prove pathwise uniqueness and existence of a weak mild solution on the interval $[0,T]$ for $T$ sufficiently small. A generalisation of the Yamada--Watanabe theorem implies the existence of a solution in the strong sense. Finally, we establish existence of the strong solution on any interval of arbitrary length $T$.


\begin{Remark}
The requirements imposed on a semigroup in (A1) are standard and are satisfied by most analytic semigroups, e.g.\ the heat semigroup on a bounded interval. 
The Lipschitz Condition (A3) is also standard, see e.g.\ Peszat and Zabczyk \cite[p.\ 142]{Peszat_Zabczyk}). Condition (A2) is more restrictive than the usual linear growth condition, see  
the following Remark \ref{rem_boundedness_fractional_is_stronger}. One could certainly weaken this condition by using stopping time arguments, but for the sake of a clearer presentation without distracting technical arguments we will not apply this here. For example, condition similar to (A2) is assumed in the work 
 \cite{Jentzen_Rockner} by Jentzen and R{\"o}ckner; as they consider Gaussian perturbations, their condition can be formulated in terms of the reproducing kernel Hilbert space, allowing for a slightly 
weaker version. 

A simple example for coefficients $F$ and $G$ satisfying Condition (A2) are diagonal operators along 
the eigenbasis of the generator $A$, a situation extensively considered in Lototsky and Rozovsky \cite{Lototsky_Rozovsky}. 
For this purpose, let $(h_n)$  be an orthonormal basis of $H$ consisting of eigenvectors of $A$, that is $A h_n = -\lambda_n h_n$ for all $n\in\N$ and some  $\lambda_n>0$. Then $S(t)h_n = e^{-\lambda_n t} h_n$. 
Let $g_n\colon H\to U$ be some functions such that $(g_n(x))_{n\in\N}$ is weakly square-summable for all $x\in H$, in which case we can define 
$$G(x)u = \sum_{n=1}^\infty \langle g_n(x),u \rangle h_n\qquad \text{for all }x\in H, \, u\in U.$$
Then $G(x)^*h_n = g_n(x)$ for $n\in \N$.
Since
$$\norm{S(t)G(x)}_{L_{\rm HS}(U,D((-A)^\delta))}^2 
= \sum_{n=1}^\infty \lambda_k^{2\delta} e^{-2\lambda_k t} \norm{g_n(x)}_U^2,$$
condition (A2) holds provided the speed of convergence of $(g_n(x))_{n\in\N}$ to $0$ is large enough.
\end{Remark}


\begin{Remark}
\label{rem_boundedness_fractional_is_stronger}
Each operator $\psi \in L_{\rm HS}(U,D((-A)^\delta))$ satisfies
$$\HSnorm{\psi}^2
\leq \bignorm{(-A)^{-\delta}}^2 \sum_{n=1}^\infty \norm{\psi e_n}_{D((-A)^\delta)}^2
= \bignorm{(-A)^{-\delta}}^2 \norm{\psi}_{L_{\rm HS}(U,D((-A)^\delta))}^2.$$
Thus, \ref{it_boundedness_fractional} implies the following: 
\begin{enumerate}[label=(A\arabic*'),start=2]
\item \label{it_boundedness} There exists a constant $M_0>0$ such that for all $t \in (0,T]$ and $x \in H$:
$$\norm{S(t)F(x)} \leq M_0, 
\qquad \text{and} \qquad 
\HSnorm{S(t)G(x)} \leq M_0.$$
\end{enumerate}
\end{Remark}

\begin{Remark}
\label{rem_Holder_VS_Lipschitz}
The boundedness Condition \ref{it_boundedness} implies that the Lipschitz Condition \ref{it_Lipschitz} is equivalent to the H\"older condition:
\begin{enumerate}[label=(A\arabic*'),start=3]
\item \label{it_Holder} There exist and constants $c_F,c_G \geq 0$ such that for any $q \in (0,1)$, $t \in [0,T]$ and for all $x,y \in H$:
\begin{align*}
\norm{S(t)(F(x)-F(y))} &\leq c_F \norm{x-y}^q,\\
\HSnorm{S(t)(G(x)-G(y))} &\leq c_G \norm{x-y}^q.
\end{align*}
\end{enumerate}
The implication \ref{it_Holder}$\implies$\ref{it_Lipschitz} can be seen by taking $q\nearrow 1$.
The converse implication \ref{it_Lipschitz}$\implies$\ref{it_Holder} follows by standard arguments. From the fact that, if  $\norm{x-y}\leq 1$, then $\norm{x-y}\leq \norm{x-y}^q$, and,  if $\norm{x-y}>1$, then  
$\norm{S(t)(F(x)-F(y))} \leq M_0 \leq M_0 \norm{x-y}$
due to \ref{it_boundedness}. Thus we may take $c_F = C_F \vee M_0$. The claim for $G$ follows analogously. 
\end{Remark}


\subsection{Pathwise uniqueness}
\label{subsec_pathwise_uniqueness}

We prove pathwise uniqueness using a special version of Gronwall's inequality from Theorem 2 in \cite{Willett_Wong}:

\begin{Theorem}[Willet, Wong, 1964]
\label{th_Willet_Wong}
Suppose that the functions $u, v, w\colon [0,T]\to \Rp$ are such that 
$v$, $w$ $vu$ and $wu^p$ are integrable for some $p\in [0,1)\cup(1,\infty)$. If
$$u(t) \leq \int_{0}^t v(s)u(s) \ds + \int_0^t w(s)u^p(s) \ds \qquad\text{for all } t \in [0,T],$$
then it follows  for $q:=1-p$ and all $t\in [0,T]$ that
$$u(t) \exp\left(-\int_0^t v(s) \ds \right) \leq \left( q\int_0^t w(s) \exp \left( -q\int_0^s v(r) \dr \right) \ud s \right)^{1/q}.$$
\end{Theorem}

\begin{Proposition}
\label{pro_pathwise_uniqueness}
With the constant $c_{2,\alpha}$ defined in Theorem \ref{th_tail_estimate} let 
\begin{equation*}
c_3:= \sup_{p\in (1,\alpha)} 2^{p-1} \left( \tfrac{\alpha-1}{\left( c_{2,\alpha}^{1/\alpha} \wedge c_{2,\alpha}\right) \alpha} c_F^p + c_G^p \right). 
\end{equation*}
If $T<\min\{1,\tfrac{1}{\alpha c_{2,\alpha} (c_3^\alpha \vee c_3)}\}$, then 
Condition \ref{it_Holder} implies that the pathwise uniqueness holds for equation \eqref{SPDE_stable}. More generally, if $X$ and $Y$ are two solutions with initial conditions $X_0$ and $Y_0$ respectively, then $X$ and $Y$ are indistinguishable processes on $\{ X_0=Y_0 \}$.
\end{Proposition}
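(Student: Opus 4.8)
The plan is to prove that $Z:=X-Y$ vanishes on the event $A:=\{X_0=Y_0\}$, the ordinary pathwise uniqueness statement being the special case $A=\Omega$. Since $A\in\mathcal F_0\subseteq\mathcal F_s$, I would subtract the two mild identities \eqref{SPDE_mild} and multiply by $\1_A$, pulling the indicator inside both integrals; the semigroup term $\1_A S(t)(X_0-Y_0)$ then vanishes and leaves the homogeneous identity
\begin{equation*}
\1_A Z(t)=\int_0^t S(t-s)\1_A\big(F(X(s))-F(Y(s))\big)\ds+\int_0^t S(t-s)\1_A\big(G(X(s-))-G(Y(s-))\big)\dL(s).
\end{equation*}
Because the stable integral only maps $L^\alpha$-integrands into $L^p$ for $p<\alpha$, I would fix $p\in(1,\alpha)$ once and for all and track the scalar quantity $u(t):=\E[\1_A\norm{Z(t)}^p]$.

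Next I would estimate $u(t)$, splitting the $p$-th power by $(a+b)^p\le 2^{p-1}(a^p+b^p)$. For the drift I would use the Lipschitz bound \ref{it_Lipschitz} (equivalently the H\"older form \ref{it_Holder}) together with H\"older's inequality in time, which contributes a term linear in $u$. For the stochastic term I would apply the maximal moment inequality of Corollary \ref{pro_moment_inequality_2}; this is the only tool available, but it carries the factor $\big(\E\int_0^t\HSnorm{S(t-s)(G(X(s-))-G(Y(s-)))}^\alpha\ds\big)^{p/\alpha}$, with the concave exponent $p/\alpha<1$. To keep this quantity finite while re-expressing it through $u$, I would split the Hilbert--Schmidt norm, bounding $p$ of the $\alpha$ factors by the Lipschitz constant and the surplus $\alpha-p$ factors by the uniform bound \ref{it_boundedness} coming from \ref{it_boundedness_fractional}. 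Writing $U(t):=\int_0^t u(s)\ds$, this should collapse to a scalar integral inequality of the shape
\begin{equation*}
U(t)\le\int_0^t v\,U(s)\ds+\int_0^t w\,U(s)^{p/\alpha}\ds,
\end{equation*}
with constants $v,w$ assembled from $c_F,c_G,c_{2,\alpha}$; the constant $c_3$ and the threshold on $T$ are exactly what make $v,w$ small. Alternatively one could run the same scheme off the tail bound of Theorem \ref{th_tail_estimate}, whose right-hand side is linear, and convert back to a $p$-th moment at the end.

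This is precisely the hypothesis of the Willet--Wong inequality, Theorem \ref{th_Willet_Wong}, with exponent $p/\alpha$ and $q=1-p/\alpha$. I would invoke it on $[0,T]$; because the inequality is homogeneous --- the initial datum having cancelled on $A$ --- and because $T<\tfrac{1}{\alpha c_{2,\alpha}(c_3^\alpha\vee c_3)}$, the resulting a priori bound is meant to force $U\equiv0$, whence $u(t)=0$ for almost every $t$ and thus $X(t)=Y(t)$ $P$-a.s.\ on $A$ for each such $t$. Finally I would upgrade this to indistinguishability: fixing a countable dense set on which $X=Y$ holds a.s.\ on $A$ and invoking right-continuity of the c\`adl\`ag paths of $X$ and $Y$, one obtains $\1_A(X(t)-Y(t))=0$ for all $t\in[0,T]$ simultaneously, almost surely.

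The hard part will be the reduction to the scalar Willet--Wong inequality and, above all, the closure of that inequality. The constraint $p<\alpha$ is forced by stable integration, so the integral operator maps into a strictly larger space than its domain, and the stochastic term unavoidably enters with the sub-linear exponent $p/\alpha<1$; with such a concave nonlinearity neither a fixed-point/contraction argument nor the classical Gronwall lemma is available, which is exactly why the Willet--Wong version is needed. The two delicate points I anticipate are, first, arranging the surplus-power estimate so that the bound stays finite yet retains enough dependence on $\norm{Z}$ to close the loop, and second, matching the exponents and exploiting the smallness of $T$ so that the homogeneous nonlinear Gronwall genuinely yields $U\equiv0$ rather than merely a finite bound.
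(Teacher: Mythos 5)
Your setup coincides with the paper's proof up to the integral inequality: the paper likewise subtracts the mild equations on $\{X_0=Y_0\}$ (additionally localising with stopping times $\tau_n$, a precaution your pulled-in indicator $\1_{\{X_0=Y_0\}}$ essentially replaces, since \ref{it_boundedness} keeps the moments finite), estimates $u(t)=\E[\1_{\{X_0=Y_0\}}\norm{X(t)-Y(t)}^p]$ via Corollary \ref{pro_moment_inequality_2}, and closes the $\alpha$-integral by exactly your interpolation between the Lipschitz bound and \ref{it_boundedness} — this is the equivalence \ref{it_Lipschitz}$\Leftrightarrow$\ref{it_Holder} of Remark \ref{rem_Holder_VS_Lipschitz}, applied with $q=p/\alpha$. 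The gap is your final step. For a \emph{fixed} $p\in(1,\alpha)$ the exponent $p/\alpha$ is strictly less than $1$, so the nonlinearity is concave, and the homogeneous inequality does \emph{not} force $U\equiv 0$: Theorem \ref{th_Willet_Wong} (with $v=0$, $q=1-p/\alpha$) yields only the finite positive bound $U(t)\le (q\,w\,t)^{1/q}$, and this is sharp, since $y(t)=(q\,w\,t)^{1/q}$ solves $y'=w\,y^{p/\alpha}$ with $y(0)=0$ — the classical non-uniqueness example for $y'=y^\beta$, $\beta<1$ (the Osgood integral $\int_{0^+} s^{-p/\alpha}\ds$ converges). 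No smallness of $T$ repairs this: shrinking $T$ makes the bound small but never zero, and the inequality remains consistent with a nonzero $U$.

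The paper's mechanism, which your plan forecloses by fixing $p$ ``once and for all'', is to treat $p$ as a free parameter with constants uniform in $p$ — this is precisely why $c_3$ is defined as a supremum over $p\in(1,\alpha)$ — apply Willett--Wong for each $p$, and obtain
\begin{equation*}
\left(\E\left[\1_{\{X_0=Y_0\}}\norm{X(t)-Y(t)}^p\right]\right)^{\alpha/p}
\leq (\alpha-p)^{-2}\bigl(\alpha c_{2,\alpha}(c_3^\alpha\vee c_3)\,t\bigr)^{\frac{\alpha}{\alpha-p}}.
\end{equation*}
The hypothesis $T<\min\{1,\tfrac{1}{\alpha c_{2,\alpha}(c_3^\alpha\vee c_3)}\}$ makes the base strictly less than $1$, so as $p\nearrow\alpha$ the exploding exponent $\tfrac{\alpha}{\alpha-p}$ drives the right-hand side to $0$ despite the prefactor $(\alpha-p)^{-2}$ (L'H\^{o}pital); hence the $p$-th moment of the difference vanishes in the limit, and a Markov-inequality argument gives $X(t)=Y(t)$ a.s.\ on $\{X_0=Y_0\}$ for each $t$. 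Your closing upgrade to indistinguishability via c\`adl\`ag paths and a dense set of times is then exactly the paper's. In short, the missing idea is the limit $p\nearrow\alpha$ — the one place where the smallness of $T$ is actually consumed — without which the concave Gronwall step cannot conclude.
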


\begin{proof}
Suppose that $X$ and $Y$ are two solutions of \eqref{SPDE_stable} with initial conditions $X_0$ and $Y_0$, respectively.
Let $\tau_n = \inf \left\{ t \geq 0 : \norm{X(t)} \wedge \norm{Y(t)} \geq n \right\} \wedge T$.
It follows by standard arguments as in \cite[Lem.\ 2.3.9]{Prevot_Rockner} and using the uniform convergence in probability as in the proof of Theorem \ref{th_tail_estimate} above that for any stopping time $\tau$ and for any $\Psi \in \Lambda$ we have
$$\int_0^{t\wedge \tau} \Psi(s) \dL(s) = \int_0^t \Psi(s) \1_{\{s \leq \tau\}} \dL(s).$$
It follows that 
\begin{align*}
(X(t \! \wedge \! \tau_n) \! - \! Y(t \! \wedge \! \tau_n))\1_{\{X_0=Y_0\}} 
&\!= \int_0^t \!\! S(t-s)\left( F(X(s)))\! - \! F(Y(s)) \right) \1_{\{s \leq \tau_n\} \cap \{X_0=Y_0\}} \ds \\
& \,\, + \!\! \int_0^t \!\! S(t-s)(G(X(s-)) \! - \!G(Y(s-))) \1_{\{s \leq \tau_n\} \cap \{X_0=Y_0\}} \ud L(s).
\end{align*}
H\"older's inequality implies for any measurable function $f\colon \Omega \times [0,t] \to H$ and $\alpha>p\geq 1$ that 
\begin{equation*}
\E \left[ \norm{\int_0^t f(s) \ds}^p \right]
\leq t^{p-1} \E \left[ \int_0^t \norm{f(s)}^p \ds \right]
\le t^{p-\frac{p}{\alpha}} \left(\E \left[ \int_0^t \norm{f(s)}^\alpha \ds \right] \right)^{p/\alpha}.
\end{equation*}
By applying Corollary \ref{pro_moment_inequality_2} and Condition \ref{it_Holder} for $q=\tfrac{p}{\alpha}$ we obtain that
\begin{align}
\label{moment_of_difference_in_uniqueness}
&\E \left[ \norm{X(t)-Y(t)}^p \1_{\{t < \tau_n\} \cap \{X_0=Y_0\}} \right] \notag \\
&\leq \E \left[ \norm{X(t\wedge \tau_n)-Y(t\wedge \tau_n)}^p \1_{\{X_0=Y_0\}} \right] \notag \\
&\leq 2^{p-1} \E \left[ \norm{\int_0^t S(t-s) \left( F(X(s))-F(Y(s)) \right) \1_{\{s \leq \tau_n\} \cap \{X_0=Y_0\}} \ds}^p \right] \notag \\
&\quad \! + 2^{p-1} \E \left[ \norm{\int_0^t S(t-s) \left( G(X(s-))-G(Y(s-)) \right) \1_{\{s \leq \tau_n\} \cap \{X_0=Y_0\}} \dL(s)}^p \right] \notag \\
&\le 2^{p-1} T^{p-\frac{p}{\alpha}} \left( \E \left[ \int_0^t \norm{S(t-s) \left( F(X(s))-F(Y(s)) \right)}^\alpha \1_{\{s \leq \tau_n\} \cap \{X_0=Y_0\}} \ds \right] \right)^{p/\alpha} \notag \\
&\quad \! + 2^{p-1} C_{\alpha,p} \left( \E \left[ \int_0^t \HSnorm{S(t-s)\left( G(X(s-))-G(Y(s-)) \right)}^\alpha \1_{\{s \leq \tau_n\} \cap \{X_0=Y_0\}} \ds \right] \right)^{p/\alpha}  \notag \\
&= 2^{p-1} \left( T^{p-\frac{p}{\alpha}} c_F^p + C_{\alpha,p} c_G^p\right) \left( \E \left[ \int_0^t \norm{X(s)-Y(s)}^p \1_{\{s < \tau_n\} \cap \{X_0=Y_0\}} \ds \right] \right)^{p/\alpha} \notag\\
&\leq  C_{\alpha,p} c_3 \left( \E \left[ \int_0^t \norm{X(s)-Y(s)}^p \1_{\{s < \tau_n\} \cap \{X_0=Y_0\}} \ds \right] \right)^{p/\alpha}, 
\end{align}
where $c_3:= 2^{p-1} \left( \tfrac{\alpha-1}{(c_{2,\alpha}^{1/\alpha} \wedge c_{2,\alpha}) \alpha} c_F^p + c_G^p\right)$. Note, the last inequality follows from the fact that $T<1$ and 
the very definition of $C_{\alpha,p}$ in Corollary \ref{pro_moment_inequality_2}  imply 
\begin{align*}
 T^{p-\frac{p}{\alpha}} c_F^p + C_{\alpha,p} c_G^p
\le \tfrac{1}{C_{\alpha,p}}(c_F^p + c_G^p)
\le  \tfrac{\alpha-1}{(c_{2,\alpha}^{1/\alpha} \wedge c_{2,\alpha}) \alpha}(c_F^p + c_G^p). 
\end{align*}
Raising inequality \eqref{moment_of_difference_in_uniqueness} to the power $\frac{\alpha}{p}$
and applying Theorem \ref{th_Willet_Wong} (with $v = 0$, $w = C_{\alpha,p}^{\alpha/p} c_3^{\alpha/p}$,  $\frac{p}{\alpha}$ instead of $p$ and $q= \frac{\alpha-p}{\alpha}$) show that 
\begin{align*}
 \left(\E \left[ \norm{X(t)-Y(t)}^p \1_{\{t < \tau_n\} \cap \{X_0=Y_0\}} \right] \right)^{\alpha/p} 
&\leq \left( \tfrac{\alpha-p}{\alpha} \int_0^t  C_{\alpha,p}^{\alpha/p} c_3^{\alpha/p} \ds \right)^\frac{\alpha}{\alpha-p}\\
&\leq (\alpha-p)^{-2} \left( \alpha c_{2,\alpha} (c_3^\alpha \vee c_3) t \right)^\frac{\alpha}{\alpha-p}, 
\end{align*}
where we used in the last inequality the inequalities $(\alpha-p)^{-\frac{\alpha}{p}} \leq (\alpha-p)^{-2}$ and $\alpha^{-1+\frac{\alpha}{p}} \leq \alpha$. 
Taking $p\nearrow \alpha$, using the assumed relation $ \alpha c_{2,\alpha} (c_3^\alpha \vee c_3) t < 1$  and applying the L'H\^{o}spital rule twice we derive
\begin{equation*}
\lim_{p \nearrow \alpha} \E \left[ \norm{X(t)-Y(t)}^p \1_{\{t < \tau_n\} \cap \{X_0=Y_0\}} \right] 
= 0.
\end{equation*}
Consequently, for each $\epsilon \in (0,1)$ there exists $1<p<\alpha$ such that
$$\E \left[ \norm{X(t) - Y(t)}^p \1_{\{t < \tau_n\} \cap \{X_0=Y_0\}} \right] 
\leq \epsilon^{2p}.$$ 
 Markov's inequality implies that 
$$P(\norm{X(t)-Y(t)} \geq \epsilon, t< \tau_n, X_0=Y_0) 
\leq \tfrac{1}{\epsilon^p} \E \left[ \norm{X(t)-Y(t)}^p \1_{\{t < \tau_n\} \cap \{X_0=Y_0\}} \right]
\leq \epsilon^p.$$
Since $\epsilon$ was arbitrary it follows that $X(t)$ and $Y(t)$ coincide on $\{t < \tau_n\} \cap \{ X_0=Y_0 \}$. 
This proves that $X$ and $Y$ are modifications of each other on $\{X_0=Y_0\}$. Since they are both c\`adl\`ag, it follows that $X$ and $Y$ are indistinguishable on $\{X_0=Y_0\}$.
\end{proof}

\subsection{Proof of tightness}
\label{subsec_proof_of_tightness}

\begin{Lemma}
\label{lem_tightness_of_Picard}
Consider the Picard approximating sequence defined by $X_0(t) = S(t)X_0$ and for $n \geq 1$ by
\begin{equation}
\label{Picard_teration}
X_n(t) = S(t)X_0 + \int_0^t S(t-s)F(X_{n-1}(s))\ds + \int_0^t S(t-s)G(X_{n-1}(s-)) \dL(s).
\end{equation}
If the assumptions \ref{it_contractions}, \ref{it_boundedness_fractional}, \ref{it_Lipschitz} hold, then the sequence $(X_n)$ is tight in $D([0,T];H)$.
\end{Lemma}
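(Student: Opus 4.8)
The plan is to apply Theorem~\ref{th_my_compact_containment_and_ldous} with the compactly embedded subspace chosen as $\Gamma := D((-A)^\delta)$, where $\delta>0$ is the exponent from Condition~\ref{it_boundedness_fractional}, equipped with the norm $\norm{\cdot}_{D((-A)^\delta)}$. First I would check that $\Gamma$ is compactly embedded in $H$: since $0\in\rho(A)$ and the sector condition in \ref{it_contractions} holds, $(-A)^{-\delta}$ is bounded and is given by a norm-convergent contour integral of resolvents $(\lambda+A)^{-1}$, each of which is compact because the embedding $D(A)\subset H$ in \ref{it_contractions}(iv) is compact; hence $(-A)^{-\delta}$ is a norm limit of compact operators and is itself compact, so the closed balls of $D((-A)^\delta)=\mathcal{R}((-A)^{-\delta})$ are precompact in $H$. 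It then remains to verify the compact containment condition \eqref{version_of_compact_containment} and the Aldous Condition for the Picard sequence defined in \eqref{Picard_teration}.

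For compact containment I would bound $\E\big[\norm{X_n(t)-S(t)X_0}_{D((-A)^\delta)}^p\big]$ for a fixed $p\in(1,\alpha)$, \emph{uniformly} in $n\in\N$ and $t\in[0,T]$. The crucial feature of Condition~\ref{it_boundedness_fractional} is that $\norm{S(t-s)F(X_{n-1}(s))}_{D((-A)^\delta)}\le M_0$ and $\norm{S(t-s)G(X_{n-1}(s-))}_{L_{\rm HS}(U,D((-A)^\delta))}\le M_0$ \emph{regardless of the argument}, so no dependence on $X_{n-1}$, and hence no induction on $n$, survives. The drift term is therefore bounded pathwise in $D((-A)^\delta)$ by $M_0 T$, while the stochastic convolution is controlled by the moment inequality of Corollary~\ref{pro_moment_inequality_2}, applied in the separable Hilbert space $D((-A)^\delta)$, giving $\E\big[\norm{\int_0^t S(t-s)G(X_{n-1}(s-))\dL(s)}_{D((-A)^\delta)}^p\big]\le C_{\alpha,p}(M_0^\alpha T)^{p/\alpha}$. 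Markov's inequality then supplies the radius $R$ in \eqref{version_of_compact_containment}. The term $S(t)X_0$ requires separate care since $X_0$ need not lie in $D((-A)^\delta)$; here I would use the analyticity of $S$ (a consequence of the sector condition) in the form $\norm{(-A)^\delta S(t)X_0}\le Ct^{-\delta}\norm{X_0}$ for every rational $t>0$, and treat the fixed, continuous, $n$-independent path $t\mapsto S(t)X_0$ as an a~priori tight summand.

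For the Aldous Condition I would estimate, for stopping times $\tau_n$ with $\tau_n+\theta\le T$ and $0<\theta\le\delta_0$, the three increments of
$$X_n(\tau_n+\theta)-X_n(\tau_n) = S(\tau_n)(S(\theta)-\Id)X_0 + \Delta^F_n + \Delta^G_n.$$
The first term is controlled purely by strong continuity: $\norm{S(\tau_n)(S(\theta)-\Id)X_0}\le\norm{(S(\theta)-\Id)X_0}\to0$ as $\theta\to0$, uniformly in $n$ and $\tau_n$ by the contraction property \ref{it_contractions}(i). For the drift increment $\Delta^F_n$ and the stochastic increment $\Delta^G_n$ I would split each into the fresh mass accumulated on $(\tau_n,\tau_n+\theta]$ and the old mass acted on by $S(\theta)-\Id$, inserting the indicators $\1_{\{s\le\tau_n\}}$ and $\1_{(\tau_n,\tau_n+\theta]}(s)$ exactly as in the proof of Proposition~\ref{pro_pathwise_uniqueness} so as to reduce everything to integrals of deterministic length. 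The fresh part is small because its $\alpha$-integrand is bounded by $M_0$ on an interval of length $\theta$; the old part is small because $S(\tau_n+\theta-s)-S(\tau_n-s)=(S(\theta)-\Id)S(\tau_n-s)$ combined with the norm-continuity estimate \eqref{norm_continuity_of_semigroup} yields $\HSnorm{(S(\theta)-\Id)S(\tau_n-s)G(X_{n-1}(s-))}\le C\theta^\delta M_0$, converting the $D((-A)^\delta)$-boundedness of \ref{it_boundedness_fractional} into an $H$-bound of order $\theta^\delta$. Applying the tail estimate of Theorem~\ref{th_tail_estimate} to $\Delta^G_n$ and Hölder's inequality to $\Delta^F_n$ then produces, uniformly in $n$ and $\tau_n$, a bound on $P(\norm{X_n(\tau_n+\theta)-X_n(\tau_n)}\ge\eta)$ that tends to $0$ with $\theta$.

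The step I expect to be the main obstacle is precisely this last one: producing increment bounds that are simultaneously uniform in the index $n$ and over all stopping times $\tau_n$, using only the $p<\alpha$ moments available for stable integrals. Two ingredients make it work. On the one hand, the uniform boundedness in \ref{it_boundedness_fractional} removes any dependence on $X_{n-1}$ and hence any risk of a constant growing with $n$; this is exactly why \ref{it_boundedness_fractional} is imposed in place of a linear growth condition. On the other hand, the regularising norm-continuity \eqref{norm_continuity_of_semigroup} is what turns mere boundedness in the stronger norm $\norm{\cdot}_{D((-A)^\delta)}$ into genuine smallness of order $\theta^\delta$ in $H$. Once the compact containment and the Aldous Condition are established, Theorem~\ref{th_my_compact_containment_and_ldous} yields tightness of $(X_n)$ in $D([0,T];H)$.
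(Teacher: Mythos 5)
Your argument reproduces the paper's proof in all essentials: the same Theorem \ref{th_my_compact_containment_and_ldous} with $\Gamma = D((-A)^\delta)$, the same decomposition of $X_n(\tau_n+\theta)-X_n(\tau_n)$ into the initial-datum term plus ``old mass'' hit by $S(\theta)-\Id$ plus ``fresh mass'' on $(\tau_n,\tau_n+\theta]$, the same conversion of the $D((-A)^\delta)$-bounds of \ref{it_boundedness_fractional} into $\theta^\delta$-smallness via \eqref{norm_continuity_of_semigroup}, and the same use of Theorem \ref{th_tail_estimate} (resp.\ Corollary \ref{pro_moment_inequality_2}) for the stochastic terms. Two of your choices differ harmlessly from the paper: the compactness of the embedding $D((-A)^\delta)\subset H$ is obtained in the paper by citing interpolation theory (Bergh--L\"ofstr\"om), whereas your contour-integral argument --- $(-A)^{-\delta}$ as a norm-convergent integral of compact resolvents $(\lambda+A)^{-1}$, compact by \ref{it_contractions}(iv) --- is a correct self-contained alternative; and your per-time treatment of $S(t)X_0$ via analyticity matches the paper's, which likewise only shows $P(\norm{S(t)X_0}_{D((-A)^\delta)}>R/3)\to 0$ for each fixed $t$ (this suffices because the proof of Theorem \ref{th_my_compact_containment_and_ldous} selects a radius $R_k$ separately for each rational $t_k$, so the blow-up of $Ct^{-\delta}$ near $t=0$ does no damage).

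There is, however, one genuine gap: you never verify that the Picard iterates are well defined and admit c\`adl\`ag modifications, which is the paper's entire Step 1 and is not automatic. First, for the stochastic integrals in \eqref{Picard_teration} to exist and for Theorem \ref{th_tail_estimate} to be applicable to your increments, the integrand $s \mapsto S(t-s)G(X_{n-1}(s-))$ must belong to the admissible class $\Lambda$, i.e.\ be adapted with c\`agl\`ad paths; since the semigroup is only strongly continuous, this requires an argument --- the paper uses Lemma \ref{lem_uniform_continuity_compact_set} applied to the relatively compact set $\{G(X_{n-1}(s-)(\omega)) : s \in [0,t]\}$ together with \ref{it_Lipschitz} at $t=0$. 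Second, the stochastic convolution depends on $t$ both through the upper limit and through the integrand, so it is not a priori c\`adl\`ag in $t$; the paper resolves this with the Sz.-Nagy dilation theorem, guaranteed by the contractivity in \ref{it_contractions}(i), writing $S(t-s) = P\hat{S}(t)\hat{S}(-s)i$ and hence the convolution as $P\hat{S}(t)\int_0^t \hat{S}(-s)\,i\,G(X_{n-1}(s-))\,\dL(s)$, a strongly continuous family of operators applied to a single genuine c\`adl\`ag integral process. Without this step, tightness in $D([0,T];H)$ is not even well posed and Theorem \ref{th_my_compact_containment_and_ldous}, which is stated for c\`adl\`ag adapted processes, cannot be invoked; you should add it before your Aldous and containment estimates.
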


\begin{proof}
We prove the claim in 3 steps. Firstly, we show that $X_n$ is well-defined and c\`adl\`ag, secondly we verify the Aldous Condition and finally we verify Condition \eqref{version_of_compact_containment}. Then Theorem \ref{th_my_compact_containment_and_ldous} implies tightness of $(X_n)$. 

Step 1.
Condition \ref{it_contractions} implies by the dilation theorem \cite[Th.\ I.8.1]{Foias_Nagy}, that there exist a Hilbert space $H\subset \hat{H}$ with continuous embedding $i\colon H\to \hat{H}$ 
 and a group of unitary operators $(\hat S(t))_{t\in \R}$ such that $S(t) = P\hat{S}(t)$ for $t\geq 0$, where $P\colon  \hat{H} \to H$ denotes the canonical 
projection. 
We now prove by induction that $X_n$ is well defined and has a c\`adl\`ag modification. Suppose that $X_{n-1}$ is c\`adl\`ag and write
$$X_n(t) = S(t)X_0 + P \hat{S}(t) \int_0^t \hat{S}(-s)iF(X_{n-1}(s))\ds +  P \hat{S}(t) \int_0^t \hat{S}(-s)iG(X_{n-1}(s-))\dL(s).$$
The Lebesgue integral is clearly well defined by the continuity of $F$, which follows from \ref{it_Lipschitz} with $t=0$. Thus, the second term on the right-hand side is a continuous process.
We prove that that the integrand 
\begin{equation}
\label{integrand}
s \mapsto \hat{S}(-s)iG(X_{n-1}(s-))
\end{equation}
is c\`agl\`ad. 
Fix $s_0 < s \leq t$. We show that the right limit exists. We have
\begin{align}
\label{right_limit_estimate}
&\norm{\hat{S}(-s)iG(X_{n-1}(s-))-\hat{S}(-s_0)iG(X_{n-1}(s_0))}_{L_{\rm HS}(U,\hat{H})} \notag \\
&\qquad \qquad \leq \norm{\left( \hat{S}(-s)-\hat{S}(-s_0) \right) i G(X_{n-1}(s-))}_{L_{\rm HS}(U,\hat{H})} \notag \\
&\qquad \qquad \quad + \norm{\hat{S}(-s_0) i \left( G(X_{n-1}(s-))-G(X_{n-1}(s_0)) \right)}_{L_{\rm HS}(U,\hat{H})}.
\end{align}
The second term on the right-hand side tends to $0$ as $s\searrow s_0$ by \ref{it_Lipschitz} applied with $t=0$ and the right-continuity of $X_{n-1}$.
For fixed $\omega \in \Omega$ it follows from the continuity of $G$ guaranteed by \ref{it_Lipschitz} with $t=0$ and from \cite[Prob.\ 1, p.\ 146]{Dieudonne} that the set $\{ iG(X_{n-1}(s-)(\omega)) : s \in [0,t] \}$ is relatively compact in $L_{\rm HS}(U,\hat{H})$. Therefore, by Lemma \ref{lem_uniform_continuity_compact_set} applied for $\hat{H}$ instead of $H$ the first term on the right-hand side of \eqref{right_limit_estimate} tends to $0$ as well.
Similarly, one shows that the limit of $\hat{S}(-s)G(X_{n-1}(s-))$ from the left equals $\hat{S}(-s_0)G(X_{n-1}(s_0-))$.
This proves that the mapping \eqref{integrand} is c\`agl\`ad. Thus, the integral in \eqref{Picard_teration} is well defined according to \cite{Jakubowski_Riedle} and  $X_n$ has a c\`adl\`ag modification. 

Step 2.
Fix $\epsilon, \eta >0$. For any $\theta$ and a sequence of stopping $(\tau_n)$ times such that $\tau_n + \theta \leq T$ we have
\begin{equation*}
\label{long_tranformation}
\begin{aligned}
&X_n(\tau_n+\theta)-X_n(\tau_n) \\
&= S(\tau_n) \left( S(\theta)- \Id \right)X_0 + \int_0^{\tau_n} S(\tau_n-s) \left( S(\theta) - \Id\right) F(X_{n-1}(s)) \ds \\
&\quad + \int_0^{\tau_n} S(\tau_n-s)\left( S(\theta) - \Id \right) G(X_{n-1}(s-))\dL(s) + \int_{\tau_n}^{\tau_n+\theta} S(\tau_n+\theta-s) F(X_{n-1}(s)) \ds \\
&\quad + \int_{\tau_n}^{\tau_n+\theta} S(\tau_n+\theta-s)G(X_{n-1}(s-))\dL(s).
\end{aligned}
\end{equation*}
Let $M:=\sup\limits_{t \in [0,T]} \norm{S(t)}$.
By using Markov's inequality and Theorem \ref{th_tail_estimate} we obtain
\begin{align*}
P(\Vert X_n(\tau_n&+\theta)-X_n(\tau_n) \Vert \geq \eta) \\
&\leq P \left( \norm{S(\tau_n)\left(S(\theta) - \Id\right) X_0} \geq \frac{\eta}{5} \right) \\
&\qquad + P\left( \norm{\int_0^{\tau_n} \left( S(\theta) - \Id \right) S(\tau_n-s) F(X_{n-1}(s)) \ds}\geq \frac{\eta}{5} \right)\\
&\qquad + P\left( \norm{\int_0^{\tau_n} (S(\theta) - \Id) S(\tau_n-s)G(X_{n-1}(s-)) \dL(s)} \geq \frac{\eta}{5} \right) \\
&\qquad + P\left( \norm{\int_{\tau_n}^{\tau_n+\theta} S(\tau_n+\theta-s) F(X_{n-1}(s)) \ds}\geq \frac{\eta}{5} \right) \\
&\qquad + P\left( \norm{\int_{\tau_n}^{\tau_n+\theta} S(\tau_n+\theta-s)G(X_{n-1}(s-)) \dL(s)} \geq \frac{\eta}{5} \right) \\
&\leq P \left( \norm{(S(\theta) - \Id)X_0} \geq \frac{\eta}{5M} \right) \\
&\qquad + \frac{5}{\eta} \E \left[ \norm{\int_0^T \1_{[0,\tau_n]}(s) \left(S(\theta) - \Id\right) S(\tau_n-s) F(X_{n-1}(s)) \ds} \right]  \\
&\qquad + \frac{5^\alpha c_{2,\alpha}}{\eta^\alpha} \E \left[ \int_0^T \1_{[0,\tau_n]}(s) \HSnorm{(S(\theta) - \Id)S(\tau_n-s)G(X_{n-1}(s-))}^\alpha \ds \right] \\
&\qquad + \frac{5}{\eta} \E \left[ \norm{\int_0^T \1_{[\tau_n,\tau_n+\theta]}(s) S(\tau_n+\theta-s) F(X_{n-1}(s)) \ds} \right] \\
&\qquad + \frac{5^\alpha c_{2,\alpha}}{\eta^\alpha} \E \left[ \int_{\tau_n}^{\tau_n+\theta} \HSnorm{S(\tau_n+\theta-s)G(X_{n-1}(s-))}^\alpha \ds \right] \\
&=: e_1 + e_2 + e_3 + e_4 + e_5.
\end{align*}
Strong continuity of $(S(t))$ shows that the term $e_1$ tends to $0$ as $\theta$ tends to $0$.
For $e_2$ we use the estimate 
\begin{align*}
&\norm{(S(\theta) - \Id)S(\tau_n-s)F(X_{n-1}(s))} \\
&\qquad\qquad \leq \norm{S(\theta)-\Id}_{L(D((-A)^\delta),H)} \norm{S(\tau_n-s)F(X_{n-1}(s))}_{D((-A)^\delta)}.
\end{align*}
From \eqref{norm_continuity_of_semigroup} and \ref{it_boundedness_fractional} we conclude $e_2 \leq C\theta^\delta M_0$.
By similar arguments as for $e_2$ we derive  
$e_3 \leq \tfrac{5^\alpha c_{2,\alpha}}{\eta^\alpha} C M_0^\alpha T \theta^\delta$.
Condition \ref{it_boundedness} shows that 
\begin{equation*}
e_4
\leq \frac{5}{\eta} \E \left[ \int_0^T \1_{[\tau_n,\tau_n+\theta]}(s)  \norm{S(\tau_n+\theta-s) F(X_{n-1}(s))} \ds \right] 
\leq \frac{5}{\eta} \theta M_0,
\end{equation*}
and similarly we conclude $e_5 \leq M_0^\alpha c_{2,\alpha} \frac{5^\alpha}{\eta^\alpha} \theta$.
Consequently, the Aldous Condition holds.

Step 3. According to \cite[Cor.\ 3.8.2, Th.\ 6.7.3]{Bergh_Lofstrom} it follows from \ref{it_contractions}(iv) that the embedding of $D((-A)^\delta) \subset H$ is compact.
Thus, the closed ball $\bar B_{D((-A)^\delta)}(0,R)$ is compact in $H$ for each $R>0$. Furthermore, Theorem 
\ref{th_tail_estimate} implies
\begin{align*}
&P \left( X_n(t) \notin \bar B_{D((-A)^\delta)}(0,R) \right) \\
&\leq P \left( \norm{S(t)X_0}_{D((-A)^\delta)} >\frac{R}{3} \right) + P \left( \norm{\int_0^t S(t-s) F(X_{n-1}(s)) \ds}_{D((-A)^\delta)} >\frac{R}{3} \right) \\
&\quad + P \left( \norm{\int_0^t S(t-s) G(X_{n-1}(s-)) \dL(s)}_{D((-A)^\delta)} >\frac{R}{3} \right) \\
&\leq P\left( \norm{S(t)X_0}_{D((-A)^\delta)} >\frac{R}{3} \right) + \frac{3}{R} \E \left[ \int_0^T \norm{S(t-s) F(X_{n-1}(s))}_{D((-A)^\delta)} \ds \right] \\
&\quad + \frac{c_{2,\alpha}3^\alpha}{R^\alpha} \E \left[ \int_0^t \norm{S(t-s)G(X_{n-1}(s-))}_{L_{\rm HS}(U,D((-A)^\delta))}^\alpha \ds \right] \\
&\leq P\left( \norm{S(t)X_0}_{D((-A)^\delta)} >\frac{R}{3} \right) + \frac{3}{R} M_0 T + \frac{c_{2,\alpha}3^\alpha}{R^\alpha} M_0^\alpha T.
\end{align*}
Since $P \left( \norm{S(t)X_0}_{D((-A)^\delta)} >\tfrac{R}{3} \right) \to 0$
as $R\to\infty$ it follows that Condition \eqref{version_of_compact_containment} holds. 
\end{proof}

We need the following strengthening of the last Lemma, which is counter-intuitive as it is well known that even in one dimension, tightness in $D([0,T];\R^2)$ is strictly stronger than the tightness of the components in $D([0,T];\R)$.
The Hilbert space $H\times H$ is equipped with the norm
$\norm{(x,y)} = \sqrt{\norm{x}^2+\norm{y}^2}$ for all $x,y\in H.$

\begin{Proposition}
\label{pro_tightness_for_joint}
If $(X_n)$ is a sequence of $D([0,T];H)$-valued random variables, satisfying the Aldous Condition and \eqref{version_of_compact_containment}, then the sequence $(X_n,X_{n-1})$ is tight in $D([0,T];H\times H)$.
\end{Proposition}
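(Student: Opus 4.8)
The plan is to verify the two hypotheses of Theorem \ref{th_my_compact_containment_and_ldous} for the $H\times H$-valued sequence $(X_n,X_{n-1})$, using as compactly embedded subspace the product $\Gamma\times\Gamma$, where $\Gamma$ is the compactly embedded subspace of $H$ supplied by the assumed validity of \eqref{version_of_compact_containment} for $(X_n)$. Since $\Gamma\hookrightarrow H$ is compact, every bounded set in $\Gamma\times\Gamma$ is contained in a product of two $\Gamma$-balls, each relatively compact in $H$, hence relatively compact in $H\times H$; thus $\Gamma\times\Gamma$ is compactly embedded in $H\times H$ with the norm $\norm{(x,y)}_{\Gamma\times\Gamma}=\big(\norm{x}_\Gamma^2+\norm{y}_\Gamma^2\big)^{1/2}$.

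For the compact containment of the pair I would use a union bound. Fixing $\epsilon>0$ and applying \eqref{version_of_compact_containment} to $(X_n)$ yields $R'>0$ such that $P\big(X_n(t)\in\Gamma,\ \norm{X_n(t)}_\Gamma\le R'\big)\ge 1-\tfrac{\epsilon}{2}$ for all $t\in[0,T]\cap\Q$ and all indices. Using this simultaneously for the index $n$ and the index $n-1$ at the same $t$ and intersecting the two events, the pair $(X_n(t),X_{n-1}(t))$ lies in $\Gamma\times\Gamma$ with $\norm{(X_n(t),X_{n-1}(t))}_{\Gamma\times\Gamma}\le\sqrt{2}\,R'$ with probability at least $1-\epsilon$, so $R:=\sqrt{2}\,R'$ does the job and \eqref{version_of_compact_containment} holds for $(X_n,X_{n-1})$.

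The crux, and the step that makes the statement appear counterintuitive, is the Aldous Condition for the pair. Fix $\epsilon,\eta>0$ and, invoking the Aldous Condition for $(X_n)$, choose $\delta>0$ so that $P\big(\norm{X_m(\sigma_m+\theta)-X_m(\sigma_m)}\ge\tfrac{\eta}{\sqrt{2}}\big)\le\tfrac{\epsilon}{2}$ uniformly over every index $m$, every $0<\theta\le\delta$, and every sequence of stopping times $(\sigma_m)$ with $\sigma_m+\delta\le T$. Given stopping times $(\tau_n)$ with $\tau_n+\delta\le T$, the elementary implication $\norm{(u,v)}\ge\eta\Rightarrow\norm{u}\ge\tfrac{\eta}{\sqrt{2}}$ or $\norm{v}\ge\tfrac{\eta}{\sqrt{2}}$ gives
\begin{align*}
&P\big(\norm{(X_n,X_{n-1})(\tau_n+\theta)-(X_n,X_{n-1})(\tau_n)}\ge\eta\big)\\
&\qquad\le P\big(\norm{X_n(\tau_n+\theta)-X_n(\tau_n)}\ge\tfrac{\eta}{\sqrt{2}}\big)+P\big(\norm{X_{n-1}(\tau_n+\theta)-X_{n-1}(\tau_n)}\ge\tfrac{\eta}{\sqrt{2}}\big).
\end{align*}
The first term is bounded by the chosen Aldous estimate with $\sigma_n=\tau_n$. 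For the second term, the decisive observation is that the two coordinates come from a single sequence read at adjacent indices: setting $m=n-1$ and $\sigma_m:=\tau_{m+1}$, the second probability is exactly $P\big(\norm{X_m(\sigma_m+\theta)-X_m(\sigma_m)}\ge\tfrac{\eta}{\sqrt{2}}\big)$, and $(\sigma_m)$ is again a sequence of stopping times with $\sigma_m+\delta=\tau_{m+1}+\delta\le T$; hence the same uniform bound applies and the second term is also at most $\tfrac{\epsilon}{2}$.

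Summing the two estimates shows that $(X_n,X_{n-1})$ satisfies the Aldous Condition, and Theorem \ref{th_my_compact_containment_and_ldous} then yields tightness of $(X_n,X_{n-1})$ in $D([0,T];H\times H)$. I expect the reindexing of stopping times in the second term to be the only genuinely delicate point, and it is precisely where the proof evades the quoted general fact: because the coordinates are not two independent sequences but the same sequence at neighbouring indices, the uniformity of the Aldous Condition over all indices and all stopping-time sequences transfers verbatim to the joint process.
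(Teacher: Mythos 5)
Your proposal is correct and follows essentially the same route as the paper: a union bound with threshold $\tfrac{\eta}{\sqrt{2}}$ (resp.\ radius $\tfrac{R}{\sqrt{2}}$) reduces both the Aldous Condition and Condition \eqref{version_of_compact_containment} for the pair $(X_n,X_{n-1})$ in $\Gamma\times\Gamma$ to the corresponding conditions for the single sequence, after which Theorem \ref{th_my_compact_containment_and_ldous} applies in $H\times H$. Your explicit reindexing $\sigma_m:=\tau_{m+1}$, exploiting that the Aldous Condition is uniform over all indices and all stopping-time sequences, is exactly the (implicit) justification behind the paper's estimate for the $X_{n-1}$-term, so you have merely spelled out a step the paper takes silently.
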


\begin{proof}
Firstly, we check that the Aldous Condition for the joint sequence $(X_n,X_{n-1}) \in H\times H$ follows from the Aldous Condition for the sequence $(X_n)$. Fix $\epsilon,\eta>0$. 
Since $(X_n)$ satisfies the Aldous Condition, there exists $\delta>0$ such that for any sequence of stopping times such that $\tau_n+\delta\leq T$ we have
$$\sup_{n \in \N} \sup_{0<\theta\leq\delta} P\left(\norm{X_n(\tau_n+\theta)-X_n(\tau_n)} \geq \frac{\eta}{\sqrt{2}} \right) \leq \frac{\epsilon}{2}.$$
It follows that 
\begin{align*}
&P(\norm{(X_n,X_{n-1})(\tau_n+\theta)-(X_n,X_{n-1})(\tau_n)} \geq \eta) \\
&\quad \leq P \left(\norm{X_n(\tau_n+\theta)-X_n(\tau_n)} \geq \frac{\eta}{\sqrt{2}}\right) + P \left(\norm{X_{n-1}(\tau_n+\theta)-X_{n-1}(\tau_n)} \geq \frac{\eta}{\sqrt{2}}\right) \leq \epsilon.
\end{align*}
Secondly, since $(X_n)$ satisfies \eqref{version_of_compact_containment} with some subspace $\Gamma$ of $H$, there exists $R>0$ such that
$P(\norm{X_n(t)}_\Gamma \leq \tfrac{R}{\sqrt{2}}) \geq 1-\frac{\epsilon}{2}$
for all $n\in \N$ and $t \in [0,T]\cap \Q$.
It follows that 
\begin{equation*}
P(\norm{(X_n,X_{n-1})(t)}_{\Gamma\times \Gamma} > R) \leq P\left( \norm{X_n(t)}_\Gamma > \tfrac{R}{\sqrt{2}} \right) + P\left(\norm{X_{n-1}(t)}_\Gamma > \tfrac{R}{\sqrt{2}}\right)
= \epsilon,
\end{equation*}
which shows that Condition \eqref{version_of_compact_containment} holds for $(X_n,X_{n-1})$ with the compact subspace $\Gamma\times\Gamma$.
\end{proof}

\subsection{Estimating the norm of the difference between \texorpdfstring{$X_n$}{Xn} and \texorpdfstring{$X_{n-1}$}{Xn-1}}
\label{subsec_estimating_the_norm_of_difference}

\begin{Lemma}
\label{lem_difference_converge_in_probability}
Assume that \ref{it_contractions}--\ref{it_Lipschitz}  are satisfied and let $(X_n)$ be the Picard approximation sequence defined in \eqref{lem_tightness_of_Picard}. 
If 
$T <1 \wedge \big((c_{2,\alpha}\vee c_{2,\alpha}^{1/\alpha}) c_3 \big)^{-\alpha}$,
then  the sequence $(X_n(t)-X_{n-1}(t))_n$ converges to $0$ in probability for each $t \in [0,T]$.
\end{Lemma}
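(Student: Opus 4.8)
The plan is to reproduce the moment computation from the proof of Proposition~\ref{pro_pathwise_uniqueness}, now iterated over the Picard index $n$, and then to defeat the resulting \emph{nonlinear} recursion by passing to a limit function and invoking the Willett--Wong inequality. Fix $p\in(1,\alpha)$ and write, for $n\ge 2$,
\begin{equation*}
X_n(t)-X_{n-1}(t) = \int_0^t S(t-s)\big(F(X_{n-1}(s))-F(X_{n-2}(s))\big)\ds + \int_0^t S(t-s)\big(G(X_{n-1}(s-))-G(X_{n-2}(s-))\big)\dL(s),
\end{equation*}
observing that the terms $S(t)X_0$ cancel. Setting $b_n(t):=\E\big[\norm{X_n(t)-X_{n-1}(t)}^p\big]$, I would bound the drift term by Hölder's inequality and the stochastic term by Corollary~\ref{pro_moment_inequality_2}, and then apply Condition~\ref{it_Holder} with $q=p/\alpha$ to convert the $\alpha$-th power of the integrand into the $p$-th power of $\norm{X_{n-1}-X_{n-2}}$. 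This is the same per-step estimate as in \eqref{moment_of_difference_in_uniqueness} and yields
\begin{equation*}
b_n(t) \le K\Big(\int_0^t b_{n-1}(s)\ds\Big)^{p/\alpha}, \qquad K:=C_{\alpha,p}\,c_3 .
\end{equation*}

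Before iterating I would record a uniform bound $\sup_{n}\sup_{t\in[0,T]}b_n(t)=:B<\infty$: the base term $b_1$ is bounded directly by Condition~\ref{it_boundedness} (again $X_1-X_0$ contains no $S(t)X_0$ term, so no integrability of $X_0$ is required), and substituting $B_{n-1}:=\sup_{t\le T}b_{n-1}(t)$ into the recursion gives $B_n\le KT^{p/\alpha}B_{n-1}^{p/\alpha}$, an iteration of a concave map that keeps $(B_n)$ bounded. The decisive step is then to turn the recursion into an \emph{autonomous} inequality. Putting $w(t):=\limsup_{n\to\infty}b_n(t)$, the uniform bound legitimises the reverse Fatou lemma, so $\limsup_n\int_0^t b_{n-1}(s)\ds\le\int_0^t w(s)\ds$, and passing to $\limsup_n$ in the recursion gives
\begin{equation*}
w(t)\le K\Big(\int_0^t w(s)\ds\Big)^{p/\alpha}.
\end{equation*}
Raising this to the power $\alpha/p$ and applying Theorem~\ref{th_Willet_Wong} exactly as in Proposition~\ref{pro_pathwise_uniqueness} (with $v=0$, the exponent $p/\alpha$, and $q=(\alpha-p)/\alpha$) produces an explicit power estimate of the form $w(t)\le(\kappa_p\,t)^{p/(\alpha-p)}$, where $\kappa_p$ converges to a finite constant as $p\nearrow\alpha$.

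To finish, I would invoke Markov's inequality: for every $\epsilon>0$ and every $p\in(1,\alpha)$,
\begin{equation*}
\limsup_{n\to\infty}P\big(\norm{X_n(t)-X_{n-1}(t)}>\epsilon\big) \le \epsilon^{-p}\,w(t) \le \epsilon^{-p}\,(\kappa_p\,t)^{p/(\alpha-p)} .
\end{equation*}
The smallness hypothesis on $T$ is calibrated precisely so that $\kappa_p\,t<1$ for all $t\le T$ and all $p$ sufficiently close to $\alpha$; since the exponent $p/(\alpha-p)$ then tends to $+\infty$ while the base stays below $1$, the right-hand side tends to $0$ as $p\nearrow\alpha$ (if a polynomial prefactor survives the simplification, one argues as in Proposition~\ref{pro_pathwise_uniqueness} via L'Hôpital). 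As the left-hand side does not depend on $p$, it must vanish, which is exactly convergence of $X_n(t)-X_{n-1}(t)$ to $0$ in probability for each $t\in[0,T]$.

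The crux, and the step I expect to be most delicate, is the nonlinearity of the recursion. Because the moment inequality of Corollary~\ref{pro_moment_inequality_2} holds only for $p<\alpha$ and necessarily brings in the $\alpha$-th power of the integrand, one is forced into a recursion with exponent $p/\alpha<1$; such a recursion, unlike the classical linear Picard recursion, does \emph{not} contract to $0$ but only to a strictly positive fixed point, so the elementary iterate-and-sum argument is unavailable. The remedy is the twofold limit: first passing to $w=\limsup_n b_n$ to restore a Gronwall-type self-referential inequality amenable to Willett--Wong, and then exploiting the freedom to send $p\nearrow\alpha$. Making these two limits cooperate, and verifying that the stated threshold on $T$ indeed drives the Willett--Wong bound below $1$, is where the real work lies.
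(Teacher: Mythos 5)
Your proof is correct, but it handles the decisive step by a genuinely different mechanism than the paper. Both arguments share the same skeleton: the per-step estimate $b_n(t)\le C_{\alpha,p}c_3\bigl(\int_0^t b_{n-1}(s)\ds\bigr)^{p/\alpha}$ obtained exactly as in \eqref{moment_of_difference_in_uniqueness} from Corollary \ref{pro_moment_inequality_2} and \ref{it_Holder} with $q=p/\alpha$, the base bound on $b_1$ from \ref{it_boundedness}, the double limit $n\to\infty$ followed by $p\nearrow\alpha$, and the final Markov step. Where you diverge is in controlling $\lim_{n\to\infty}b_n(t)$: the paper unrolls the recursion into explicit nested integrals, evaluates them exactly, and must then prove the convergence of the infinite product correction \eqref{lim_product_1} via the two-range splitting \eqref{k_small}--\eqref{k_large}; it never uses Willett--Wong in this lemma. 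You instead first secure the uniform bound $B_n\le K T^{p/\alpha}B_{n-1}^{p/\alpha}$ (an induction with $M=\max\{B_1,(KT^{p/\alpha})^{\alpha/(\alpha-p)}\}$ makes this airtight, and the pathwise c\`adl\`ag regularity gives measurability of $b_n$), pass to $w=\limsup_n b_n$ by reverse Fatou, and apply Theorem \ref{th_Willet_Wong} to the autonomous inequality $w(t)\le K\bigl(\int_0^t w(s)\ds\bigr)^{p/\alpha}$, yielding $w(t)\le(\kappa_p t)^{p/(\alpha-p)}$ with
\begin{equation*}
\kappa_p=\tfrac{\alpha-p}{\alpha}\,K^{\alpha/p}
=c_{2,\alpha}\,\alpha^{\frac{\alpha}{p}-1}(\alpha-p)^{-\frac{\alpha-p}{p}}\,c_3^{\alpha/p}
\longrightarrow c_{2,\alpha}c_3 \quad\text{as } p\nearrow\alpha .
\end{equation*}
The stated threshold does cooperate: $T<1$ together with $T<\bigl((c_{2,\alpha}\vee c_{2,\alpha}^{1/\alpha})c_3\bigr)^{-\alpha}$ forces $c_{2,\alpha}c_3T<1$ in every case (check separately $c_{2,\alpha}\ge1$, and $c_{2,\alpha}<1$ with $c_3\ge1$ or $c_3<1$), so $\kappa_p t\le\theta<1$ for $p$ close to $\alpha$ and $\theta^{p/(\alpha-p)}\to0$; your worry about a surviving polynomial prefactor is unfounded, since $(\alpha-p)^{-(\alpha-p)/p}\to1$, so you do not even need the L'H\^opital argument that the paper invokes after \eqref{limit_of_xi}. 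What each approach buys: yours is leaner, avoids the combinatorial bookkeeping and the product lemma entirely, and reuses Willett--Wong uniformly across uniqueness (Proposition \ref{pro_pathwise_uniqueness}) and convergence; the paper's explicit $\xi(n,p)$ retains quantitative information, namely a rate of decay in $n$ for each fixed $p$, which your limsup argument deliberately discards. One point worth noting honestly: for fixed $p$ your $w(t)$ is \emph{not} shown to vanish --- the sublinear Bihari bound only gives a positive ceiling --- so the whole weight of the proof rests on sending $p\nearrow\alpha$, exactly as you observed; this is the same structural phenomenon that forces the paper's double limit.
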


\begin{proof}
Step 1: we prove that for every $t \in [0,T]$  we have
\begin{equation}
\label{convergence_of_difference_in_Lp}
\lim_{p \nearrow \alpha} \lim_{n\to \infty} \E \left[ \norm{X_n(t)-X_{n-1}(t)}^p \right] = 0.
\end{equation}
We fix some $p\in (1,\alpha)$ and define $c_I:= 2^{\alpha-1}\big( \tfrac{\alpha-1}{(c_{2,\alpha}^{1/\alpha} \wedge c_{2,\alpha} ) \alpha} M_0^p + M_0^{p/\alpha} \big)$.  Proposition \ref{pro_moment_inequality_2} and Condition \ref{it_boundedness} imply
\begin{align} \label{bound_on_X_1_and_X_0}
&\E \left[ \norm{X_1(t)-X_0(t)}^p \right]  \notag \\
&\qquad\leq 2^{p-1} \left( \E \left[ \norm{\int_0^t S(t-s) F(X_0) \ds}^p \right] + \E \left[ \norm{\int_0^t S(t-s) G(X_0)\dL(s)}^p \right] \right)\notag \\
&\qquad\leq 2^{p-1} \left( T^p M_0^p + C_{\alpha,p} \left( \E \left[ \int_0^T \HSnorm{S(s)G(X_0)}^\alpha \ds \right] \right)^{p/\alpha} \right) \notag \\
&\qquad \leq 2^{p-1} \left( T^p M_0^p + C_{\alpha,p} T^{p/\alpha} M_0^p \right)\notag \\
&\qquad \leq C_{\alpha,p} c_I, 
\end{align}
where in the last line we use $T<1$ and $c_{2,\alpha}^{1/\alpha}\wedge c_{2,\alpha}\le c_{2,\alpha}^{p/\alpha}$. 
Similarly as in \eqref{moment_of_difference_in_uniqueness}, we conclude from  Corollary \ref{pro_moment_inequality_2} and Condition \ref{it_Holder} that for each $n\in\N$ we have 
$$\E \left[\norm{X_n(t)-X_{n-1}(t)}^p \right]  \\
\leq C_{\alpha,p} c_3 \left( \int_0^t  \E \left[ \norm{X_{n-1}(t_1)-X_{n-2}(t_1)}^p \right] \ud t_1 \right)^{p/\alpha}.$$
By iterating the last inequality and using the upper bound \eqref{bound_on_X_1_and_X_0} we obtain
\begin{align*}
\E & \left[ \norm{X_n(t)-X_{n-1}(t)}^p  \right] \\
&\leq C_{\alpha,p}^{1+\frac{p}{\alpha}} c_3^{1+\frac{p}{\alpha}} \bigg( \int_0^t \left( \int_0^{t_1} \E \left[ \norm{X_{n-2}(t_2)-X_{n-3}(t_2)}^p \right] \ud t_2 \right)^{p/\alpha} \dt_1 \bigg)^{p/\alpha} \\
&\leq C_{\alpha,p}^{1 + \frac{p}{\alpha} + \left( \frac{p}{\alpha} \right)^2+\ldots+\left(\frac{p}{\alpha}\right)^{n-2}} c_3^{1 + \frac{p}{\alpha} + \frac{p^2}{\alpha^2} + \ldots + \left(\frac{p}{\alpha} \right)^{n-2}} \\
&\qquad \times \bigg(\int_0^t \bigg( \int_0^{t_1} \ldots \left(  \int_0^{t_{n-2}} \E \left[ \norm{X_1(t_{n-1})-X_0(t_{n-1})}^p \right] \ud t_{n-1} \right)^{p/\alpha} \ldots \bigg)^{p/\alpha} \dt_1 \bigg)^{p/\alpha} \\
&\leq C_{\alpha,p}^{\frac{1-\left(\frac{p}{\alpha}\right)^n}{1-\frac{p}{\alpha}}} c_3^{\frac{1-\left(\frac{p}{\alpha}\right)^n}{1-\frac{p}{\alpha}}} c_I^{\left(\frac{p}{\alpha}\right)^{n-1}}  \bigg( \int_0^t \bigg( \int_0^{t_1} \ldots \left(  \int_0^{t_{n-2}} \dt_{n-1} \right)^{p/\alpha} \ldots \bigg)^{p/\alpha} \dt_1 \bigg)^{p/\alpha}.
\end{align*}
Calculating the integrals above and inserting the formula for the constant $C_{\alpha,p}$ implies 
\begin{align*}
&\E \left[ \norm{X_n(t)-X_{n-1}(t)}^p \right] \\
&\leq (c_{2,\alpha}^{p/\alpha} \alpha)^{\frac{1-\left(\frac{p}{\alpha}\right)^n}{1-\frac{p}{\alpha}}} c_3^{\frac{1-\left(\frac{p}{\alpha}\right)^n}{1-\frac{p}{\alpha}}} c_I^{\left(\frac{p}{\alpha}\right)^{n-1}} 
\left(\tfrac{1}{\alpha-p}\right)^{1+\left(\frac{p}{\alpha}\right)^{n-1}} 
t^{\tfrac{p}{\alpha} \frac{1-\left(\frac{p}{\alpha}\right)^{n-2}}{1-\frac{p}{\alpha}}} \alpha^{-\frac{p}{\alpha}\frac{1-\left(\frac{p}{\alpha}\right)^{n-2}}{1-\frac{p}{\alpha}}} 
 \prod_{k=2}^{n-1} \left(\tfrac{1}{1-\left(\frac{p}{\alpha}\right)^k} \right)^{\left(\frac{p}{\alpha}\right)^{n-k}} \\
&=: \xi(n,p).
\end{align*}
To establish $\xi(n,p)\to 0$ as $n\to\infty$, we  first show 
\begin{equation}
\label{lim_product_1}
\lim_{n \to \infty} \prod_{k=2}^{n-1} \left(\frac{1}{1-\left(\frac{p}{\alpha}\right)^k} \right)^{\left(\frac{p}{\alpha}\right)^{n-k}} =1.
\end{equation}
Since the left side is larger than or equal to 1,  taking logarithms in \eqref{lim_product_1} shows that it is enough to prove 
\begin{equation}
\label{enough_to_prove_limsup}
\limsup_{n\to \infty } \sum_{k=2}^{n-1} \left( \frac{p}{\alpha} \right)^{n-k} \log\left( \frac{1}{1- \left( \frac{p}{\alpha}\right)^k} \right) \leq 0.
\end{equation}
We split the sum according to $k\leq \left\lfloor \frac{n}{2} \right\rfloor$ and $k> \left\lfloor \frac{n}{2} \right\rfloor$.
Since $\log\left( \frac{1}{1- \left( \frac{p}{\alpha}\right)^k} \right) \leq \log \left( \frac{1}{1-\left( \frac{p}{\alpha}\right)^2} \right)$ we have
\begin{align}
\label{k_small}
\sum_{k=2}^{\left\lfloor \frac{n}{2} \right\rfloor} \left( \frac{p}{\alpha} \right)^{n-k} \log\left( \frac{1}{1- \left( \frac{p}{\alpha}\right)^k} \right)
&\leq \log \left( \frac{1}{1-\left( \frac{p}{\alpha}\right)^2} \right) \sum_{k=2}^{\left\lfloor \frac{n}{2} \right\rfloor} \left(\frac{p}{\alpha}\right)^{n-k} \notag \\
&= \log \left( \frac{1}{1-\left( \frac{p}{\alpha}\right)^2} \right) \left(\frac{p}{\alpha}\right)^{n-\left\lfloor \frac{n}{2} \right\rfloor} \frac{1-\left(\frac{p}{\alpha}\right)^{\left\lfloor \frac{n}{2}\right\rfloor-1}}{1-\frac{p}{\alpha}}  \to 0
\end{align}
as $n\to \infty$. For estimating the sum with $k> \left\lfloor \frac{n}{2} \right\rfloor$  we use the inequality
$$\log \left( \frac1{1-x} \right) = - \log(1-x) \leq \log(4) x \qquad \text{for all } x \in \left(0,\frac{1}{2} \right).$$
If $n$ is large enough such that $\left( \frac{p}{\alpha} \right)^{\left\lfloor \frac{n}{2} \right\rfloor +1} < \frac{1}{2}$ it follows that 
\begin{align}
\label{k_large}
\sum_{k=\left\lfloor \frac{n}{2} \right\rfloor+1}^{n-1} \left( \frac{p}{\alpha} \right)^{n-k} \log\left( \frac{1}{1- \left( \frac{p}{\alpha}\right)^k} \right)
&\leq \log(4) \sum_{k=\left\lfloor \frac{n}{2} \right\rfloor+1}^{n-1} \left( \frac{p}{\alpha} \right)^{n-k} \left( \frac{p}{\alpha} \right)^k \notag \\
&= \log(4) \left(n-1-\left\lfloor \frac{n}{2} \right\rfloor \right) \left(\frac{p}{\alpha} \right)^n \to 0 
\end{align}
as $n\to \infty$. 
Combining \eqref{k_small} and \eqref{k_large} finishes the proof of \eqref{lim_product_1} due to  \eqref{enough_to_prove_limsup}.
It follows that
\begin{align}
\label{limit_of_xi}
\lim_{n\to \infty} \xi(n,p) 
&=  (c_{2,\alpha}^{p/\alpha}\alpha c_3 t^{p/\alpha})^{\frac{1}{1-\frac{p}{\alpha}}} \frac{1}{\alpha-p} \frac{1}{\alpha^{\frac{p}{\alpha} \frac{1}{1-\frac{p}{\alpha}}}} \notag \\
&\leq \left( (c_{2,\alpha}\vee c_{2,\alpha}^{1/\alpha}) c_3 t^{1/\alpha} \right)^{\frac{\alpha}{\alpha-p}} \frac{1}{\alpha-p}.
\end{align}
Since the assumption on $T$ guarantees that $(c_{2,\alpha}\vee c_{2,\alpha}^{1/\alpha}) c_3 t^{1/\alpha}$ is smaller than $1$, the right side of \eqref{limit_of_xi} converges to $0$ as $p\nearrow \alpha$, which completes the 
proof of \eqref{convergence_of_difference_in_Lp}.

Step 2.
For fixed $\epsilon \in (0,1)$, Equation \eqref{convergence_of_difference_in_Lp} implies that there exists $p_0>1$ such that for $p\in [p_0,\alpha)$ we have
$$\lim\limits_{n \to \infty} \E \left[ \norm{X_n(t)-X_{n-1}(t)}^p \right] \leq \frac{\epsilon^{1+\alpha}}{2}.$$ 
It follows that there exists $n_0(p)$ such that for $n\geq n_0(p)$ we have 
$$\E \left[ \norm{X_n(t)-X_{n-1}(t)}^p \right] \leq \epsilon^{1+\alpha}.$$  
Markov's inequality implies for $n \geq n_0(p)$ that 
\begin{equation*}
P(\norm{X_n(t)-X_{n-1}(t)} \geq \epsilon) 
\leq \frac{1}{\epsilon^p} \E \left[ \norm{X_n(t)-X_{n-1}(t)}^p \right] 
\leq \epsilon,
\end{equation*}
which completes the proof.
\end{proof}

\subsection{Proof of the main result}
\label{subsec_proof_main_result}

\begin{proof}[Proof of Theorem \ref{th_existence_weak_sol_main_result}]
We divide the proof in several small steps. 

(1) Lemma \ref{lem_tightness_of_Picard} and Proposition \ref{pro_tightness_for_joint} imply that the joint sequence $(X_n,X_{n-1})$ is tight. Consequently,  with $A_L$ defined in \eqref{definition_A_L}, the sequence $(X_n,X_{n-1},A_L)$ is tight in the space $D([0,T];H \times H \times \R^\infty)$
and thus there exists a convergent subsequence $(X_{n_k},X_{n_k-1},A_L)$ due to Prokhorov's theorem.
The Skorokhod theorem \cite[Th.\ 6.7]{Billingsley} implies that there exists a probability basis $(\bar{\Omega},\bar{\mathcal{F}},\bar{P},(\bar{\mathcal{F}}_t))$, a sequence $(\bar{X}_k,\bar{Y}_{k},\bar{B}_k)$ and random variables $\bar{X}, \bar{Y}$ with the same law:
\begin{equation} \label{eq.Skorokhod-equal}
\mathcal{L}(X_{n_k},X_{n_k-1},A_L) = \mathcal{L}(\bar{X}_k,\bar{Y}_{k},\bar{B}_k)\qquad\text{for all } k \in \N,
\end{equation}
and such that $(\bar{X}_{k},\bar{Y}_{k},\bar{B}_k) \to (\bar{X},\bar{Y},\bar{B})$ a.s.\ as $k\to \infty$.

(2) Recall that $A_L$ in \eqref{definition_A_L} is defined in terms of a dense set $D:=\{u_n : n \in \N\}\subseteq U$. As in Lemma \ref{lem_metric_space}, but using $\bar{B}_k$ and $\bar{B}$ we can construct cylindrical L\'evy processes $(\bar{L}_k(t):\, t\ge 0)$ and  $(\bar{L}(t):\, t\ge 0)$ with cylindrical random variables  $\bar{L}_k(t), \bar{L}(t) \colon U \to L^0(\bar{\Omega},\bar{\mathcal{F}},\bar{P})$ and such that $L$, $\bar{L}_k$ and $\bar{L}$ have the same cylindrical distributions.  We claim that $\bar{L}_k(t)u$ converges in probability to $\bar{L}(t)u$ for all $t\ge 0$ and $u\in U$. If $u\in D$, then $ \bar{L}_k(\cdot)u$ converges a.s.\ to $\bar{L}(\cdot)u$ in $D([0,T];\R)$ because $\bar{B}_k$ converges a.s.\ to $B$ in $D([0,T];\R^\infty)$. Thus, since 
$\bar{L}(\cdot)u$ does not have any fixed time of discontinuity, it follows that  $\bar L_k(t)u \to \bar L(t)u$ for every $t \in [0,T]$, see \cite[Th.\ 12.5]{Billingsley}. If $u\in U\setminus D$, then fix $\epsilon \in (0,\tfrac{1}{3})$. 
Choose $n \in \N$ such that $c_{1,\alpha} t \norm{u-u_n}^\alpha \left( \frac{\epsilon}{3} \right)^\alpha \leq \frac{\epsilon}{3}$ and let $k_0$ be such that $\bar{P} \left( \abs{ \bar L_k(t) u_n - \bar L(t)u_n} \geq \frac{\epsilon}{3} \right) \leq \frac{\epsilon}{3}$ for all $k \geq k_0$. 
By Lemma \ref{lem_tail_estimate_radonified} (under the identification $u \in L_{\rm HS}(U,\R)$ with $\norm{u} = \norm{u}_{L_{\rm HS}(U,H)}$) we have for all $k \geq k_0$
\begin{align*}
&\bar{P} \left( \abs{\bar L_k(t)u - \bar L(t)u} \geq \epsilon \right) \\
&\leq \bar{P} \left( \abs{\bar L_k(t)u - \bar L_k(t) u_n} \geq \frac{\epsilon}{3} \right) + \bar{P} \left( \abs{ \bar L_k(t) u_n - \bar L(t)u_n} \geq \frac{\epsilon}{3} \right) + \bar{P} \left( \abs{\bar L(t) u_n - \bar L(t)u} \geq \frac{\epsilon}{3} \right) \\
&\leq c_{1,\alpha} t \norm{u-u_n}^\alpha \left( \frac{\epsilon}{3} \right)^\alpha + \bar{P} \left( \abs{ \bar L_k(t) u_n - \bar L(t)u_n} \geq \frac{\epsilon}{3} \right) + c_{1,\alpha} t \norm{u-u_n}^\alpha \left( \frac{\epsilon}{3} \right)^\alpha \\
& \leq \epsilon, 
\end{align*}
which shows that $\bar{L}_k(t)u$ converges in probability to $\bar{L}(t)u$ for all $t\ge 0$ and $u\in U$. 

(3) We prove that the processes $\bar{X}$ and $\bar{Y}$ are indistinguishable if the time interval $[0,T]$ is sufficiently small. Since $(\bar{X}_k(t),\bar{Y}_k(t))$ has the same law as $(X_{n_k}(t),X_{n_k-1}(t))$ for each  $t\in [0,T]$,  
 Lemma \ref{lem_difference_converge_in_probability} implies for any $\epsilon>0$ and $t\in [0,T]$ that
$$\bar{P}\left( \norm{\bar{X}_k(t)-\bar{Y}_k(t)} \geq \epsilon \right)
= P\left( \norm{X_{n_k}(t)-X_{n_k-1}(t)} \geq \epsilon \right) \to 0
\qquad\text{as }k\to\infty. $$
It follows that $\bar{X}_k(t)-\bar{Y}_k(t)$ converges to $0$ in probability. 
By the continuous mapping theorem $\bar{X}_k(t) \to \bar{X}(t)$ a.s.\ and $\bar{Y}_k(t)\to \bar{Y}(t)$ a.s.\ for $t \in [0,T] \setminus (\mathcal{J}(\bar{X})\cup \mathcal{J}(\bar{Y}))$.
By the uniqueness of limits (in probability) we obtain that $\bar{X}(t) = \bar{Y}(t)$ a.s.\ for $t \in [0,T] \setminus (\mathcal{J}(\bar{X})\cup \mathcal{J}(\bar{Y}))$.
Using the fact that $[0,T] \setminus (\mathcal{J}(\bar{X})\cup \mathcal{J}(\bar{Y}))$ is dense in $[0,T]$ and that the processes $\bar{X}$ and $\bar{Y}$ are c\`adl\`ag we get that they are indistinguishable.

(4) We establish that the assumption \eqref{eq.joint-distribution-equal} in Lemma \ref{lem_unique_distribution_of_integral} is satisfied. 
Let $\Psi_k(s) = S(t-s) G(X_{n_k-1}(s-))$ and $\bar \Psi_k(s) = S(t-s) G(\bar Y_k(s-))$ for $s \in [0,t]$. 
Fix $s_1,\ldots, s_m,t_1,\ldots, t_n \in [0,t]$ and $v_1,\ldots, v_n \in U$.
If  $\{v_1,\ldots, v_n\} \subset D$, then \eqref{eq.Skorokhod-equal} implies
\begin{multline}
\label{vectors_equal_in_dist}
\left( \Psi_k(s_1), \ldots, \Psi_k(s_m), L(t_1)v_1, \ldots, L(t_n)v_n \right)  \\
\stackrel{\mathcal D}{=} \left( \bar \Psi_k(s_1), \ldots, \bar \Psi_k(s_m), \bar L_k(t_1)v_1, \ldots, \bar L_k(s_n)v_n \right).
\end{multline}
Otherwise, each $v_i\in U\setminus D$ can be approximated by elements of $\{u_n : n \in \N\}$ and a simple argument using characteristic functions shows that \eqref{vectors_equal_in_dist} holds 
for arbitrary $v_1,\dots, v_n\in U$.  

(5) In this step, we show that $\bar{X}$ is a weak mild solution if the time interval $[0,T]$ is sufficiently small.  Step (4) enables us to apply Lemma \ref{lem_unique_distribution_of_integral}, which yields by the very definition of $X_{n_k}$ in \eqref{Picard_teration}  that
\begin{equation}
\label{new_processes_old_equation}
\bar{X}_k(t) = S(t)\bar{X}_k(0) + \int_0^t S(t-s)F(\bar{Y}_k(s))\ds + \int_0^t S(t-s) G(\bar{Y}_k(s-))\, \ud \bar{L}_k(s).
\end{equation}
For passing to the limit as $k\to\infty$ we make the following observations: 
firstly, the mapping $\pi_0$ defined on $D([0,T];H)$ by $x \mapsto x(0)$ is continuous (see \cite[p.\ 133]{Billingsley}), and thus $\bar{X}_k(0) \to \bar{X}(0)$ a.s. 
Secondly, as $F$ is continuous, it follows from Step (3) that  $S(t-s)F(\bar{Y}_k(s)) \to S(t-s) F(\bar{X}(s))$ a.s.\ for every $s\in [0,t] \setminus \mathcal{J}(\bar{X})$.
The boundedness of $F$ according to \ref{it_boundedness} implies by Lebesgue dominated convergence theorem that
\begin{equation*}
\int_0^t S(t-s)F(\bar{Y}_k(s-)) \ds \to \int_0^t S(t-s) F(\bar{X}(s-)) \ds,
\end{equation*}
in $L^1(\Omega;H)$ for every $t\in [0,T]$.
Thirdly, we consider the convergence of the stochastic integrals.
By Lemma \ref{lem_continuity_in_Skorokhod_space} we have that $S(t-s)G(\bar{Y}_k(s-))$ converges to $S(t-s)G(\bar{Y}(s-))$ for almost all $s \in [0,t]$. Due to \ref{it_boundedness}, it follows from Lebesgue's dominated convergence theorem and Step (3)   that
$$\lim_{k\to \infty} \E \left[ \int_0^t \norm{S(t-s)G(\bar{Y}_k(s-)) - S(t-s) G(\bar{X}(s-))}_{L_{\rm HS}(U,H)}^\alpha \ds \right] = 0.$$
Lemma \ref{lem_general_integrand} implies that 
\begin{equation*}
\lim_{k\to\infty} \int_0^t S(t-s)G(\bar{Y}_k(s-)) \, \ud \bar{L}_k(s) = \int_0^t S(t-s) G(\bar{X}(s-)) \, \ud \bar{L}(s)
\end{equation*}
in probability for every $t\in [0,T]$. In summary, we may pass to the limit in \eqref{new_processes_old_equation}
as $k\to \infty$ to obtain for each $t\in [0,T]\setminus \mathcal{J}(\bar{X})$ that
$$\bar{X}(t) = S(t)\bar{X}(0) +  \int_0^t S(t-s)F(\bar{X}(s))\ds +  \int_0^t S(t-s)G(\bar{X}(s-)) \, \ud \bar{L}(s).$$
almost surely for $t \notin \mathcal{J}(\bar{X})$.
The right-hand side is  c\`adl\`ag by the dilation theorem \cite[Th.\ I.8.1]{Foias_Nagy} and the left-hand side is c\`adl\`ag by definition. Thus the conclusion holds for any $t\in [0,T]$.
This proves existence of a weak mild solution under the additional assumption of sufficiently small $T$.

(6)  Since the solution of \eqref{SPDE_stable} is  pathwise uniqueness for sufficiently small interval $[0,T]$ according to Proposition \ref{pro_pathwise_uniqueness}, Theorem 1.5 and Proposition 2.13 in \cite{Kurtz_Yamada} imply the existence of a strong solution on $[0,T]$. The details on the applicability of the abstract setting of \cite{Kurtz_Yamada} are given in the PhD thesis \cite{Tomasz_PhD}.

(7) 
Knowing the existence of a strong solution on a small interval enables us to work on the original probability space for arbitrary $T>0$: choose a time $T_0$ and $n\in\N$ satisfying the conditions in Lemma \ref{pro_pathwise_uniqueness} and  \ref{lem_difference_converge_in_probability} such that $[0,T_0]\cup[T_0,2T_0]\cup \ldots \cup [(n-1)T_0,nT_0] = [0,T]$. The previous considerations show the existence of a unique mild solution $X_1$ on $[0,T_0]$. Similarly there exists a unique mild solution  $X_2$ on $[T_0,2T_0]$ with the initial condition $X_1(T_0)$. We continue this procedure and define $X(t):=X_k(t)$ for $t \in [(k-1)T_0,kT_0]$ for $k=2,\ldots, n$.
\end{proof}

\bibliography{References_stable}
\bibliographystyle{plainurl}

\end{document}